\documentclass[oneside,a4paper]{amsart}

\usepackage[T1]{fontenc}

\usepackage{amsmath,amssymb,amscd,mathrsfs,epic,empheq}

\usepackage[colorlinks=true,hyperindex, linkcolor=magenta, pagebackref=false, citecolor=cyan,pdfpagelabels]{hyperref}
\usepackage[all]{xy}
\usepackage{mathabx}
\usepackage{placeins} 
\usepackage[usenames, dvipsnames]{xcolor} 
\usepackage{euscript} 
\usepackage{bbm} 
\usepackage{enumitem} 
\usepackage[nameinlink,capitalise,noabbrev]{cleveref} 
\usepackage{todonotes}

\hypersetup{
    colorlinks=true,
    linkcolor=PineGreen,
    filecolor=magenta,      
    citecolor=cyan,
}

\usepackage{euscript}
\usepackage{graphicx}
\usepackage{stmaryrd}

\usepackage[margin=1in]{geometry}

\usepackage{tikz}
\usetikzlibrary{arrows,backgrounds,
    decorations.pathreplacing,
    decorations.pathmorphing
}
\usetikzlibrary{matrix,arrows}

\raggedbottom
\numberwithin{equation}{subsection}

\setcounter{tocdepth}{3}

\newtheorem{Proposition}{Proposition}[section]
\newtheorem{Lemma}[Proposition]{Lemma}
\newtheorem{Theorem}[Proposition]{Theorem}

\newtheorem{Corollary}[Proposition]{Corollary}

\theoremstyle{definition}
\newtheorem{Definition}[Proposition]{Definition}
\newtheorem{Construction}[Proposition]{Construction}
\newtheorem{Remark}[Proposition]{Remark}
\newtheorem{Convention}[Proposition]{Convention}
\newtheorem{Notation}[Proposition]{Notation}
\newtheorem{Example}[Proposition]{Example}

\newtheorem{Warning}[Proposition]{Warning}

\newcommand{\euXym}{\ensuremath{\euXymatrix@1}}

\newcommand{\C}{\ensuremath{\EuScript{C}}}
\newcommand{\euP}{\ensuremath{\EuScript{P}}}

\newcommand{\dcat}{\ensuremath{\mathcal{D}}}

\newcommand{\euY}{\ensuremath{\EuScript{Y}}}
\newcommand{\euX}{\ensuremath{\EuScript{X}}}

\newcommand{\euZ}{\ensuremath{\EuScript{Z}}}
\newcommand{\euW}{\ensuremath{\EuScript{W}}}

\newcommand{\eF}{\ensuremath{\EuScript{F}}}

\newcommand{\Ccat}{\mathcal{C}}
\newcommand{\Dcat}{\mathcal{D}}
\newcommand{\Ncat}{\mathcal{N}}
\newcommand{\eupsilon}{\ensuremath{\EuScript{E}}}

\newcommand{\deloop}{\ensuremath{\EuScript{B}}}
\newcommand{\twistedcurves}{\ensuremath{\mathfrak{M}^{\text{tw}}}}

\newcommand{\vir}{\ensuremath { \text{vir}}}

\def\hom{{\operatorname{Hom}}}

\def\Ext{\operatorname{Ext}}

\def\op{\operatorname{op}}

\def\p{\operatorname{p}}

\def\Mod{\operatorname{Mod}}

\def\dim{\operatorname{dim}}

\def\spec{\operatorname{Spec}}

\def\coh{\operatorname{Coh}}
\def\qcoh{\operatorname{QCoh}}

\def\Fun{\operatorname{Fun}}

\def\fib{\operatorname{fib}}
\def\map{\operatorname{map}}

\def\calg{\operatorname{CAlg}}

\def\sym{\operatorname{Sym}}
\def\spaces{\EuScript{S}}

\def\sheaves{\operatorname{Sh}}

\def\presheaves{\operatorname{PSh}}

\def\affines{\operatorname{\EuScript{A}ff}}
\def\stacks{\operatorname{\EuScript{S}tk}}
\def\pairs{\operatorname{\EuScript{P}air}}
\def\prestacks{\operatorname{\EuScript{P}r\EuScript{S}tk}}
\def\artinstacks{\operatorname{\EuScript{A}rt}}
\def\relativeartinstacks{\operatorname{\EuScript{R}el\EuScript{A}rt}}
\def\ab{\operatorname{Ab}}

\def\fib{\operatorname{fib}}

\def\chow{\operatorname{CH}}

\newcommand{\triplerightarrow}{%
\tikz[minimum height=0ex]
  \path[->]
   node (a)            {}
   node (b) at (1em,0) {}
  (a.north)  edge (b.north)
  (a.center) edge (b.center)
  (a.south)  edge (b.south);%
}

\makeatletter
  \def\subsection{\@startsection{subsection}{1}%
  \z@{.7\linespacing\@plus\linespacing}{.5\linespacing}%
  {\normalfont\bfseries\centering}}
\makeatother

\setcounter{tocdepth}{3}
\let\oldtocsection=\tocsection
\let\oldtocsubsection=\tocsubsection
\let\oldtocsubsubsection=\tocsubsubsection
\renewcommand{\tocsection}[2]{\hspace{0em}\oldtocsection{#1}{#2}}
\renewcommand{\tocsubsection}[2]{\hspace{1em}\oldtocsubsection{#1}{#2}}
\renewcommand{\tocsubsubsection}[2]{\hspace{2em}\oldtocsubsubsection{#1}{#2}}


\setlength{\textwidth}{\paperwidth}
\addtolength{\textwidth}{-2.5in}
\calclayout

\begin{document}
\title[The Intrinsic Normal Cone for Artin Stacks]{The Intrinsic Normal Cone for Artin Stacks}
\author[Dhyan Aranha, Piotr Pstr\k{a}gowski]{Dhyan Aranha, Piotr Pstr\k{a}gowski}
\email{dhyan.aranha@gmail.com, pstragowski.piotr@gmail.com}

\begin{abstract}
We extend the construction of the normal cone of a closed embedding of schemes to any locally of finite type morphism of higher Artin stacks and show that in the Deligne-Mumford case our construction recovers the relative intrinsic normal cone of Behrend and Fantechi. We characterize our extension as the unique one satisfying a short list of axioms, and use it to construct the deformation to the normal cone. As an application of our methods, we associate to any morphism of Artin stacks equipped with a choice of a global perfect obstruction theory a relative virtual fundamental class in the Chow group of Kresch.
\end{abstract}

\maketitle 

\tableofcontents

\section{Introduction} 

Moduli spaces often have an expected or so called "virtual" dimension at each point, which is a lower bound for the actual dimension. An important example due to Kontsevich \cite{Kont} is given by the moduli stack $\overline{\mathcal{M}}_{g, n}(V, \beta)$ of stable maps of degree $ \beta \in H_2(V)$ from $n$-marked prestable curves of genus $g$ into a smooth projective variety $V$. This is a proper Deligne-Mumford stack whose actual dimension at a given point $(C, f)$ will in general be larger than it's virtual dimension, given by
\begin{equation*}
3g-3+n + \chi(C, f^*T_V) = (1-g)(\dim V - 3) - \beta(\omega_V) +n.
\end{equation*}
The moduli of stable curves is used to define Gromov-Witten invariants of $V$, and one of the key ingredients is to be able to construct a virtual fundamental class
\begin{equation*}
[\overline{\mathcal{M}}_{g, n}(V, \beta)]^{\text{vir}} \in A_{(1-g)(\dim V - 3) - \beta(\omega_V) +n} (\overline{\mathcal{M}}_{g, n}(V, \beta)).
\end{equation*}
of the expected dimension. There is a general procedure for constructing such classes whenever the moduli space in question is a Deligne-Mumford stack equipped with a choice of a perfect obstruction theory due to Behrend and Fantechi \cite{behrend_fantechi_intrinsic_normal_cone}.

The construction of the above virtual fundamental classes allowed Behrend to construct Gromov-Witten invariants for arbitrary smooth projective varieties and arbritrary genus satisfying the axioms of Kontsevich and Manin \cite{Beh}, \cite{KontManin}. Virtual fundamental classes are also are also foundational objects of several other enumerative theories, such as those of Donaldson-Thomas, Pandharipande-Thomas, and Vafa-Witten invariants \cite{DT}, \cite{PT}, \cite{TT}.

In this paper, we extend the methods of Behrend and Fantechi to the setting of higher Artin stacks. Reducing to the classical case, where we have access to the Chow groups of Kresch, we are then able to construct a virtual fundamental class in a wide context.

\begin{Theorem}
\label{Theorem:existence_of_virtual_class_intro} 
Let $\euX \rightarrow \euY$ be a morphism of finite type Artin stacks. Suppose that  $\euY$ is of pure dimension $r$ and that we have a perfect obstruction theory $\eupsilon \rightarrow L_{\euX / \euY}$ which admits a global resolution. Then, there is a well-defined virtual fundamental class $[\euX \rightarrow \euY, \eupsilon]^{\vir} \in \chow_{r + \chi(\eupsilon)}(\euX)$ in the Chow group of $\euX$. 
\end{Theorem}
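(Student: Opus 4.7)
The plan is to follow the Behrend--Fantechi template, but using the extended relative intrinsic normal cone $\mathfrak{C}_{\euX/\euY}$ that is constructed earlier in the paper for an arbitrary locally finite type morphism of higher Artin stacks. First I would invoke that construction to produce $\mathfrak{C}_{\euX/\euY}$ as a cone stack over $\euX$, and then use the hypothesis that $\euY$ has pure dimension $r$ to deduce that $\mathfrak{C}_{\euX/\euY}$ is itself of pure dimension $r$. This step uses the axioms satisfied by the cone (\'etale local behavior plus agreement with the Behrend--Fantechi construction in the Deligne--Mumford case) to reduce the dimension count to a smooth local model, where it becomes the standard statement about the normal cone of a closed embedding into a smooth stack.

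Next I would use the perfect obstruction theory $\eupsilon \to L_{\euX/\euY}$ to embed the cone into a vector bundle stack. Dualizing, $\eupsilon$ gives rise to a vector bundle stack $\mathfrak{E} := h^{1}/h^{0}(\eupsilon^{\vee})$ over $\euX$, and the standard argument (that $\eupsilon \to L_{\euX/\euY}$ being a perfect obstruction theory means it is an isomorphism on $h^{0}$ and a surjection on $h^{-1}$) shows that $\mathfrak{C}_{\euX/\euY} \hookrightarrow \mathfrak{E}$ is a closed embedding of cone stacks. Now the global resolution hypothesis enters: it means we may choose a presentation $\eupsilon \simeq [E^{-1} \to E^{0}]$ with $E^{i}$ vector bundles on $\euX$, so that $\mathfrak{E} = [E_{1}/E_{0}]$ where $E_{i} = (E^{-i})^{\vee}$. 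Pulling back the embedded cone under the smooth atlas $E_{1} \twoheadrightarrow [E_{1}/E_{0}]$ yields a closed substack $C \hookrightarrow E_{1}$ which is a $E_{0}$-invariant cone of pure dimension $r + \rank E_{0}$.

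Third, I would produce the class by applying the Gysin intersection with the zero section of the vector bundle $E_{1} \to \euX$ to the fundamental class of $C$. Since all the stacks in question are finite type Artin stacks, this intersection is carried out inside the Chow theory of Kresch, where the Gysin pullback along the zero section of a vector bundle exists and lowers dimension by the rank. This produces
\begin{equation*}
0_{E_{1}}^{!}[C] \in \chow_{r + \rank E_{0} - \rank E_{1}}(\euX) = \chow_{r + \chi(\eupsilon)}(\euX),
\end{equation*}
which we define to be $[\euX \to \euY, \eupsilon]^{\vir}$.

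The main obstacle is well-definedness: one must show independence from the choice of global resolution $[E^{-1} \to E^{0}]$ of $\eupsilon$. The standard device is that any two such resolutions are dominated by a third via chain homotopy equivalences, so it suffices to handle the elementary case of replacing $[E^{-1} \to E^{0}]$ by $[E^{-1} \oplus F \to E^{0} \oplus F]$ for a bundle $F$, under which the embedded cone changes by tensoring with $F$ and the Gysin pullback absorbs the extra factor by the bundle axioms in Kresch's Chow theory. A secondary issue, which I expect to be routine given the axiomatics set up earlier in the paper, is to verify that the purity of $\mathfrak{C}_{\euX/\euY}$ really does hold at the correct dimension $r$ when $\euY$ is of pure dimension $r$; this is the place where the proper choice of dimension conventions for Artin cone stacks (as formulated earlier in the paper) is essential.
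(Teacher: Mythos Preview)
Your proposal is essentially correct and follows the same template as the paper: embed the intrinsic normal cone into the abelian cone of the obstruction theory, lift to a vector bundle atlas furnished by the global resolution, and intersect with the zero section in Kresch's Chow theory.

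A few points of divergence worth noting. First, the paper's notion of \emph{global resolution} is not a two-term complex but a single locally free sheaf $E$ together with a map $\eupsilon \to E$ injective on $h_{0}$; the smooth atlas of $C_{\euX}(\eupsilon)$ is then the vector bundle $C_{\euX}(E)$ itself, via the cofibre-sequence criterion of \textbf{Lemma \ref{Lemma:abelian_cone_takes_certain_maps_of_qcoh_sheaves_to_atlases}}. Your two-term presentation is a special case of this and leads to the same class. Second, for the purity of $\Ccat_{\euX}\euY$ in dimension $r$, the paper does not reduce to a smooth local model as you suggest, but instead invokes the deformation to the normal cone (\textbf{Theorem \ref{Theorem:existence_and_properties_of_deformation_space}} and \textbf{Corollary \ref{Corollary:properties_stable_under_flat_deformation_are_the_same_for_normal_cone_and_target}}): since $M^{\circ}_{\euX}\euY \to \mathbb{P}^{1}$ is flat with general fibre $\euY$ and special fibre $\Ccat_{\euX}\euY$, purity of dimension transfers directly. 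This is cleaner than chasing through the smooth descent axioms. Third, the independence argument in the paper (\textbf{Proposition \ref{Proposition:invariance_of_global_resolution}}) is exactly your ``dominated by a third'' device, implemented as: for two global resolutions $E$, $F$, the sum $E \oplus F$ is again one, and one compares via $0_{E \oplus F}^{!} \circ p^{*} = 0_{E}^{!}$ for the projection $p: C_{\euX}(E \oplus F) \to C_{\euX}(E)$.
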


To mention a couple of examples to which \cref{Theorem:existence_of_virtual_class_intro} applies, we have
\begin{enumerate}
\item  the moduli of twisted stable maps whose target is an Artin stack, see \cref{Example:twisted_stable_maps} and 
\item the $0$-truncation of any quasi-smooth morphism of derived Artin stacks, in particular the moduli spaces arising in Donaldson-Thomas theory, see \cref{Example:quasi-smooth_derived_stacks}.
\end{enumerate}
The theory of twisted stable maps is of particular importance, as it allows one to construct generalizations of Gromov-Witten invariants; this will be explored in forthcoming work. 

To obtain the needed virtual fundamental class, Behrend and Fantechi associate to any morphism $\euX \rightarrow \euY$ of Deligne-Mumford type the \emph{intrinsic normal cone} $\Ccat_{\euX} \euY$, which is a closed substack of the normal sheaf. Informally, the virtual fundamental class is then obtained by intersecting the class of the normal cone with the zero section of abelian cone of the chosen perfect obstruction theory, mirroring a classical construction of Fulton \cite{fulton2013intersection}. 

In general, the intrinsic normal cone of a Deligne-Mumford stack is only Artin rather than Deligne-Mumford, and likewise it turns out that the natural definition of the intrinsic normal cone $\Ccat_{\euX}$ where $\euX$ is Artin forces the cone to be a \emph{higher} Artin stack; that is, an \'{e}tale sheaf on the site of schemes valued in the $\infty$-category of spaces rather than in groupoids. Thus, to obtain the correct generalization we are forced to work in the setting of higher algebraic stacks. 

Since we work with $\infty$-categories, it is often easier to uniquely characterize a given construction rather than to write it down directly. This is exactly what we do, and so our work offers some conceptual clarification even in the classical context. 

Let us say that a \emph{relative higher Artin stack} $\euX \rightarrow \euY$ is a locally of finite type morphism of higher Artin stacks. We denote the $\infty$-category of relative higher Artin stacks with morphisms given by commutative squares by $\relativeartinstacks$. We will say a morphism of relative higher Artin stacks 

\begin{center}
	\begin{tikzpicture}
		\node (TL) at (0, 1.3) {$ \euX^{\prime} $};
		\node (TR) at (1.5, 1.3) {$ \euY^{\prime} $};
		\node (BL) at (0, 0) {$ \euX $};
		\node (BR) at (1.5, 0) {$ \euY $};
		
		\draw [->] (TL) -- (TR);
		\draw [->] (TL) -- (BL);
		\draw [->] (TR) -- (BR);
		\draw [->] (BL) -- (BR);
		\end{tikzpicture}
\end{center}
is \emph{smooth} if both vertical arrows are smooth and \emph{surjective} if both vertical arrows are surjective. 

If $\euX \rightarrow \euY$ is a relative Artin stack, then its \emph{normal sheaf} $\Ncat_{\euX} \euY := \mathbb{V}_{\euX}(L_{\euX / \euY}[-1])$ is defined as the abelian cone associated to the shift of the cotangent complex. Our first result provides a unique characterization of this construction.

\begin{Theorem}[\ref{Theorem:unique_characterization_of_a_normal_sheaf_of_morphisms_of_artin_stacks}]
\label{Theorem:introduction_characterization_of_normal_sheaf}
The normal sheaf functor $\Ncat: \relativeartinstacks \rightarrow \artinstacks$ is characterized uniquely by the following properties: 

\begin{enumerate}
\item If $U \hookrightarrow V$ is a closed embedding of schemes, then $\Ncat_{U}V$ coincides with the normal sheaf in the classical sense, that is, $\Ncat_{U}V \simeq \mathbb{V}_{U}(I / I^{2})$, where $I$ is the ideal sheaf
\item $\Ncat$ preserves coproducts.
\item $\Ncat$ preserves smooth and smoothly surjective maps.
\item $\Ncat$ commutes with pullbacks along smooth morphisms.
\end{enumerate}
\end{Theorem}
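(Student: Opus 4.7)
The plan is to verify existence directly and prove uniqueness by descending from closed embeddings via smooth atlases. Existence is a routine check that the functor $\Ncat_\euX \euY = C_\euX(L_{\euX/\euY}[-1])$ satisfies (1)--(4): property (1) follows from the identification $L_{U/V}[-1] \simeq I/I^2$ for a closed embedding (or more precisely its connective truncation), property (2) from compatibility of the cotangent complex with coproducts, and properties (3)--(4) from functoriality and smooth base change of $L$ together with the corresponding behaviour of the abelian cone functor $C$.

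For uniqueness, let $F \colon \relativeartinstacks \to \artinstacks$ be another functor satisfying (1)--(4); the goal is to produce a natural equivalence $F \simeq \Ncat$. I would first show that axioms (3) and (4) jointly force $F$ to satisfy smooth descent: given a smooth surjection $\pi \colon (\euX' \to \euY') \twoheadrightarrow (\euX \to \euY)$ in $\relativeartinstacks$, axiom (3) makes $F(\pi)$ a smooth surjection in $\artinstacks$, while axiom (4) identifies $F$ applied to the \v{C}ech nerve of $\pi$ (computed termwise in $\relativeartinstacks$) with the \v{C}ech nerve of $F(\pi)$. Since smooth surjections of higher Artin stacks are effective epimorphisms, this forces $F((\euX \to \euY)) \simeq |F(\pi^\bullet)|$. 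Applying this twice---first with a smooth affine cover $U \to \euY$, then with a smooth affine cover $V \to \euX \times_\euY U$---the value of $F$ is determined by its restriction to morphisms $V \to U$ of affine schemes of finite type.

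For such $V \to U$, I would globally factor $V \hookrightarrow W \to U$ with $W := \mathbb{A}^n_U$ as a closed embedding followed by the smooth projection. The commutative square with $\id_V$ on top and this factorization on the sides provides a smooth surjection $(V \hookrightarrow W) \twoheadrightarrow (V \to U)$ in $\relativeartinstacks$; axiom (1) identifies the source with the classical normal sheaf $C_V(I/I^2)$, and each higher \v{C}ech term is again a closed embedding of schemes (namely $V$ embedded diagonally into $W \times_U \cdots \times_U W$), so axiom (1) computes every level. Smooth descent then presents $F((V \to U))$ as the geometric realization of this simplicial object, and matching against the cotangent fiber sequence $L_{W/U}|_V \to L_{V/U} \to L_{V/W}$---together with the fact that the abelian cone $C$ sends cofiber sequences of connective sheaves to fiber sequences of stacks---identifies the realization with $C_V(L_{V/U}[-1]) = \Ncat_V U$. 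The main obstacle will be the descent step: $\relativeartinstacks$ is an $\infty$-category whose objects are themselves morphisms, so pullbacks and \v{C}ech nerves must be analyzed termwise but then reconciled with the descent data needed for $F(\pi^\bullet)$ to present $F((\euX \to \euY))$ as the colimit of a smooth groupoid in $\artinstacks$.
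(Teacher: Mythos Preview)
Your proposal is correct and matches the paper's strategy: axioms (2)--(4) force any such $F$ to be a cosheaf for the smooth topology on $\relativeartinstacks$, closed embeddings of schemes form a generating subcategory, and hence axiom (1) determines $F$ uniquely. The paper packages this reduction abstractly via a framework of \emph{adapted cosheaves}, proving once that such a cosheaf is determined by its restriction to a generating subcategory; your final step of explicitly computing the realization through the cotangent fibre sequence is unnecessary, since applying the same descent argument to $\Ncat$ (which you have already checked satisfies (2)--(4)) presents both $F(V\to U)$ and $\Ncat_V U$ as colimits of the \emph{same} simplicial diagram of classical normal sheaves.
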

It is not difficult to see that any functor satisfying the above properties is a cosheaf on $\relativeartinstacks$ with respect to the topology determined by smoothly surjective maps, and so \cref{Theorem:introduction_characterization_of_normal_sheaf} is strongly related to the flat descent for the cotangent complex \cite{bhatt2012completions}. 

Since we work only with discrete rings, the abelian cone associated to a quasi-coherent sheaf depends only on its coconnective part, which one can in fact recover from the abelian cone. Thus, \cref{Theorem:introduction_characterization_of_normal_sheaf} can be interpreted as saying that the "naive" cotangent complex $(L_{\euX / \euY})_{\leq 1}$ is already determined by its behaviour on closed embeddings of schemes.

\begin{Theorem}[\ref{Theorem:there_exists_a_unique_normal_cone_functor_subject_to_properties}, \ref{Theorem:normal_cone_is_a_closed_substack_of_normal_sheaf_and_has_cartesian_property_for_smooth_morphisms}]

\label{Theorem:introduction_there_exists_a_unique_normal_cone_functor_subject_to_properties}
There exists a unique functor $\ \Ccat: \relativeartinstacks \rightarrow \artinstacks$, called the \emph{normal cone}, which satisfies the following properties:
\begin{enumerate}
\item If $U \hookrightarrow V$ is a closed embedding of schemes, then $\Ccat_{U}V$ coincides with the classical normal cone, that is, $\Ccat_{U}V \simeq \spec_{U}(\bigoplus I^{k} / I^{k+1})$, where $I$ is the ideal sheaf.
\item $\Ccat$ preserves coproducts.
\item $\Ccat$ preserves smooth and smoothly surjective maps.
\item $\Ccat$ commutes with pullbacks along smooth morphisms of relative Artin stacks.
\end{enumerate}
Moreover, there is a natural map $\Ccat_{\euX} \euY \hookrightarrow \Ncat_{\euX} \euY$ which is a closed embedding for an arbitrary relative higher Artin stack $\euX \rightarrow \euY$.
\end{Theorem}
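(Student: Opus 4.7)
The plan is to prove uniqueness by a smooth descent argument modeled on that of Theorem \ref{Theorem:introduction_characterization_of_normal_sheaf}, then construct $\Ccat$ by left Kan extension from closed embeddings of schemes, and finally construct the natural transformation $\Ccat \to \Ncat$ termwise and verify it is a closed embedding by smooth-local considerations.

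For uniqueness, suppose a functor $\Ccat$ satisfies (1)--(4). Given any relative higher Artin stack $\euX \to \euY$, I would iteratively build a smoothly surjective map $(U \hookrightarrow V) \to (\euX \to \euY)$ whose source is a closed embedding of schemes: first pick a smooth atlas $V \to \euY$ by a scheme, then a smooth atlas $U \to \euX \times_\euY V$ by a scheme, and replace $U$ by a suitable closed embedding into an affine bundle over $V$ to arrange that $U \hookrightarrow V$ is a closed embedding (in the higher Artin case one iterates on $k$-stacks). Property (3) applied to the Čech nerve of this cover, combined with (2) and (4), exhibits $\Ccat_\euX \euY$ as a colimit of pieces each of which is determined by (1) and (4). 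This forces $\Ccat_\euX \euY$ to agree with any other functor satisfying the axioms.

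For existence, I would define $\Ccat$ as the left Kan extension of the classical normal cone functor along the inclusion of closed embeddings of schemes into $\relativeartinstacks$, where the Kan extension is performed with respect to the smooth-surjective topology. Concretely, for $\euX \to \euY$ choose a cover as above and set
\[
\Ccat_\euX \euY := \colim_{[n] \in \Delta^{\op}} \Ccat(U_n \hookrightarrow V_n),
\]
where $(U_\bullet \hookrightarrow V_\bullet)$ is the Čech nerve of $(U \hookrightarrow V) \to (\euX \to \euY)$. Well-definedness reduces to the uniqueness argument once the four axioms are verified for the construction: (1) is definitional; (2) follows because coproducts commute with colimits; (3) follows from the classical fact that smooth (and smoothly surjective) maps of closed embeddings of schemes induce smooth (resp. smoothly surjective) maps on classical normal cones; and (4) is smooth base-change compatibility of the classical normal cone, which propagates through the colimit.

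For the natural transformation $\Ccat \to \Ncat$, recall that for a closed embedding of schemes $U \hookrightarrow V$ with ideal sheaf $I$ there is a canonical surjection of graded $\mathcal{O}_U$-algebras $\sym_{\mathcal{O}_U}(I/I^2) \twoheadrightarrow \bigoplus_{k \geq 0} I^k/I^{k+1}$, inducing a closed embedding of relative spectra $\Ccat_U V \hookrightarrow \Ncat_U V$. This defines the natural transformation on the subcategory of closed embeddings of schemes, and we extend it to all of $\relativeartinstacks$ via the descent presentation used to construct $\Ccat$ (and the analogous presentation of $\Ncat$). To see that $\Ccat_\euX \euY \hookrightarrow \Ncat_\euX \euY$ remains a closed embedding in general, observe that both $\Ccat$ and $\Ncat$ commute with smooth pullback by (4), and that being a closed embedding of higher Artin stacks is local in the smooth topology on the target; the question therefore reduces to the case of a closed embedding of schemes, where it is classical.

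The main obstacle I expect is verifying property (3) for the left Kan extended functor: given a smooth or smoothly surjective map $(\euX' \to \euY') \to (\euX \to \euY)$, one must choose compatible atlases so that the induced map on Čech nerves lifts termwise to smooth (resp.\ smoothly surjective) maps of closed embeddings of schemes, and then show that smoothness of the geometric realization follows from the termwise smoothness. This requires the careful interaction between smooth descent for abelian cones and the inductive definition of higher Artin stacks, and is the place where the methods developed for $\Ncat$ in Theorem \ref{Theorem:introduction_characterization_of_normal_sheaf} must be adapted to the graded-algebra setting of $\Ccat$.
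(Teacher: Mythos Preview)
Your overall architecture is correct and matches the paper: uniqueness via smooth descent from pairs, existence via extension of the classical normal cone from $\pairs$ as an adapted cosheaf, and the natural transformation $\Ccat \to \Ncat$ extended from the classical surjection $\sym(I/I^2) \twoheadrightarrow \bigoplus I^k/I^{k+1}$.

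However, there is a genuine gap in your verification of properties (3) and (4), and the paper's solution is not the one you suggest. You assert that (4) is ``smooth base-change compatibility of the classical normal cone,'' but the axiom is strictly stronger: it demands that for \emph{any} smooth morphism of pairs $(N \hookrightarrow M) \to (X \hookrightarrow Y)$ and any other morphism of pairs with the same target, the normal cone of the fiber product computes the fiber product of normal cones. This is not classical base-change in the target alone, and it does not follow from anything you have written. Likewise, for (3) you correctly flag the difficulty but propose to adapt the cotangent-complex arguments used for $\Ncat$ to the graded-algebra setting; this is not what the paper does, and it is unclear it would work, since there is no analogue of the transitivity cofibre sequence for $\bigoplus I^k/I^{k+1}$.

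The paper's key maneuver is to reverse the order of your argument: instead of first proving (3) and (4) for $\Ccat$ and then constructing $\Ccat \to \Ncat$, one first proves directly that for any smooth morphism of pairs the square
\[
\begin{tikzpicture}
\node (TL) at (0, 1.3) {$\Ccat_{N} M$};
\node (TR) at (2, 1.3) {$\Ncat_{N} M$};
\node (BL) at (0, 0) {$\Ccat_{X} Y$};
\node (BR) at (2, 0) {$\Ncat_{X} Y$};
\draw [->] (TL) -- (TR);
\draw [->] (TL) -- (BL);
\draw [->] (TR) -- (BR);
\draw [->] (BL) -- (BR);
\end{tikzpicture}
\]
is cartesian. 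This is a concrete scheme-level statement, proven by locally factoring the smooth morphism of pairs as an \'etale map followed by a zero-section inclusion into an affine bundle followed by a flat cartesian square, each of which is handled by direct computation. Once this cartesian property is in hand, a general lemma about cosheaves (if $F \to G$ is $T$-cartesian and $G$ is a $T$-adapted cosheaf, then so is $F$) immediately yields both (3) and (4) for $\Ccat$ on $\pairs$ from the already-established properties of $\Ncat$. The extension to all of $\relativeartinstacks$, the preservation of the cartesian property, and the closed-embedding conclusion then follow formally from the adapted-cosheaf machinery. In short, the natural transformation $\Ccat \to \Ncat$ is not a corollary but the engine of the proof.
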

Note that \cref{Theorem:introduction_there_exists_a_unique_normal_cone_functor_subject_to_properties} is qualitatively different from our axiomatization of the normal sheaf, where we have the construction using the cotangent complex, as part of the statement is that the needed functor exists. Rather, \cref{Theorem:introduction_characterization_of_normal_sheaf} should be thought of as suggesting that the above set of axioms on the normal cone is the right one. This is further evidenced by the following comparison with the construction of Behrend and Fantechi.

\begin{Theorem}[\ref{Theorem:comparison_with_Behrend_Fantechi_normal_cone}]
\label{Theorem:introduction_comparison_of_normal_cone_with_intrinsic_normal_cone_of_behrend_fantechi}
Let $\euX \rightarrow \euY$ be relatively Deligne-Mumford morphism of Artin stacks of finite type. Then, the normal cone $\Ccat_{\euX} \euY$ coincides with the relative intrinsic normal cone of Behrend and Fantechi.
\end{Theorem}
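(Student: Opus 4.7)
The strategy is to invoke the uniqueness clause of \textbf{Theorem \ref{Theorem:introduction_there_exists_a_unique_normal_cone_functor_subject_to_properties}}: I would show that the Behrend-Fantechi intrinsic normal cone, viewed as a functor $\Ccat^{BF}$ on the full subcategory of relatively Deligne-Mumford morphisms of finite type Artin stacks, satisfies the analogues of the four axioms that characterize $\Ccat$. Since those axioms together with local smooth presentations by schemes already pin down the value of an axiom-satisfying functor on this subcategory, the comparison will follow formally.

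Concretely, one recalls that Behrend and Fantechi construct their relative cone by picking a smooth atlas $V \to \euY$ by a scheme, an \'{e}tale atlas $U \to \euX \times_{\euY} V$ by a scheme, and \'{e}tale-locally a closed embedding $U \hookrightarrow W$ into a smooth $V$-scheme, then gluing the stacky quotients $[C_{U} W / T_{W/V}|_{U}]$ via smooth descent. Axiom (1) is tautological, axiom (2) is immediate, and axiom (4) follows from the standard flat base change for classical normal cones together with the functoriality of $T_{W/V}$. The crux is axiom (3): preservation of smooth and smoothly surjective morphisms is exactly the smooth descent statement that underlies Behrend and Fantechi's original well-definedness proof, and it passes immediately to the stacky quotients. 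With all four axioms established, every relatively DM morphism can be computed smoothly locally from a closed embedding of a scheme into a smooth scheme over a scheme, so both $\Ccat^{BF}_{\euX}\euY$ and $\Ccat_{\euX}\euY$ are determined by the same smooth descent built from axiom (1), hence coincide.

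The main obstacle will be lifting Behrend and Fantechi's classical $2$-categorical construction to a functor valued in the $\infty$-category $\artinstacks$ of higher Artin stacks: one needs both constructions to produce the same object in the $\infty$-categorical sense, not merely on coarse moduli. Fortunately, in the relatively DM case the stacks $[C_{U} W / T_{W/V}|_{U}]$ and their smooth gluings are $1$-truncated, so the higher-categorical upgrade is essentially formal --- smooth descent and the relevant pieces of the cotangent complex behave classically in truncation degree $\le 1$. The genuine care is in matching the pullback data implicit in BF's choice of smooth covers with the pullback axiom (4) of \textbf{Theorem \ref{Theorem:introduction_there_exists_a_unique_normal_cone_functor_subject_to_properties}}, but this is a routine unravelling once one fixes compatible atlases.
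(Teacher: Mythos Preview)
Your approach is correct but inverts the paper's logic. Rather than verifying that the Behrend--Fantechi construction satisfies all four axioms and then invoking uniqueness, the paper reduces to a morphism $X \to Y$ of schemes using smooth base-change and \'etale invariance (properties shared by both constructions, and already established for the new cone in \textbf{Proposition \ref{Proposition:smooth_base_change}} and \textbf{Proposition \ref{Proposition:etale_invariance_of_the_normal_cone_for_relative_artin_stacks}}), picks a factorization $X \hookrightarrow M \to Y$ with $M \to Y$ smooth surjective, and then observes that the cartesian square
\[
\Ccat_{X} M \simeq \Ccat_{X} Y \times_{\Ncat_{X} Y} \Ncat_{X} M
\]
supplied by \textbf{Theorem \ref{Theorem:normal_cone_is_a_closed_substack_of_normal_sheaf_and_has_cartesian_property_for_smooth_morphisms}} is precisely Behrend and Fantechi's \emph{definition} of the intrinsic normal cone. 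So the paper matches the two constructions by showing that \emph{its} cone satisfies Behrend--Fantechi's defining property, whereas you propose to show that \emph{their} cone satisfies the paper's axioms. Your route is more systematic but demands more work on the Behrend--Fantechi side: you must upgrade their $2$-categorical construction to an $\infty$-functor and verify axiom (4) in full generality (compatibility with pullbacks along arbitrary smooth morphisms of relative stacks, where both source and target vary), not merely flat base-change in the target. The paper sidesteps this entirely by only needing the two special cases of base-change and \'etale invariance plus the cartesian property it has already proved for its own cone.
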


The proofs of \cref{Theorem:introduction_characterization_of_normal_sheaf} and \cref{Theorem:introduction_there_exists_a_unique_normal_cone_functor_subject_to_properties} use what we call \emph{adapted cosheaves}. Roughly, a functor $F: \Ccat \rightarrow \mathcal{H}$ from an $\infty$-site into an $\infty$-topos is an adapted cosheaf it it satisfies descent and preserves pullbacks along a distinguished class of geometric morphisms which contains all coverings. Our main result shows that an adapted cosheaves are stable under left Kan extension from a generating subcategory.

In our case, $\Ccat$ is the $\infty$-category of relative higher Artin stacks, the distinguished class of maps is given by smooth morphisms, and the generating subcategory is the category of closed embeddings of schemes. This method is very general, and allows one to construct other functors related to the normal cone, for example the deformation space. 

\begin{Theorem}[\ref{Theorem:existence_and_properties_of_deformation_space}]
\label{Theorem:introduction_existence_of_deformation_space}
For any relative higher Artin stack $\euX \rightarrow \euY$ there exists a higher Artin stack $M^\circ_{\euX} \euY$ which fits into a commutative diagram

 \begin{center}
	\begin{tikzpicture}
		\node (TL) at (-1 , 1) {$ \euX \times \mathbb{P}^{1} $};
		\node (TR) at (1, 1) {$ M^\circ_{\euX} \euY  $};
		\node (B) at (0, 0){$ \mathbb{P}^{1} $};
		
		\draw[right hook->] (TL) -- (TR);
		\draw[->] (TL) -- (B);
		\draw[->] (TR) -- (B);
	\end{tikzpicture}
\end{center}
where both vertical arrows are flat and such that

\begin{enumerate}
\item over $\mathbb{A}^{1} \simeq \mathbb{P}^{1} - \{ \infty \}$, the horizontal arrow is equivalent to $\euX \times \mathbb{A}^{1} \hookrightarrow \euY \times \mathbb{A}^{1}$ and
\item over $\{ \infty \}$, the horizontal arrow is equivalent to $\euX \hookrightarrow \Ccat_{\euX}\euY$.
\end{enumerate}
\end{Theorem}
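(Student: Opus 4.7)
My plan is to construct the deformation space by first defining the functor on closed embeddings of schemes via the classical deformation to the normal cone and then extending by the adapted cosheaf machinery alluded to in the discussion following \textbf{Theorem \ref{Theorem:introduction_there_exists_a_unique_normal_cone_functor_subject_to_properties}}. Concretely, for a closed embedding $U \hookrightarrow V$ of schemes I would set
\[
M^\circ_U V \; := \; \Bl_{U \times \{\infty\}}(V \times \mathbb{P}^1) \;\setminus\; \widetilde{V \times \{\infty\}},
\]
where $\widetilde{V \times \{\infty\}}$ denotes the strict transform. This is a classical construction; it is well known that the structure map to $\mathbb{P}^1$ is flat, that the fiber over $\mathbb{A}^1$ is $V \times \mathbb{A}^1$, and that the fiber over $\infty$ is the normal cone $\Ccat_U V = \spec_U(\bigoplus I^k/I^{k+1})$, recovering the scheme-level axioms of \textbf{Theorem \ref{Theorem:introduction_there_exists_a_unique_normal_cone_functor_subject_to_properties}}. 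The closed embedding $U \times \mathbb{P}^1 \hookrightarrow M^\circ_U V$ in the scheme case exists and is compatible with these fiber identifications.

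Next I would verify that on the category of closed embeddings of schemes the assignment $(U \hookrightarrow V) \mapsto (M^\circ_U V \to \mathbb{P}^1)$ satisfies the hypotheses needed to invoke the adapted cosheaf extension theorem used to produce $\Ncat$ and $\Ccat$: namely that $M^\circ$ commutes with coproducts, preserves smooth morphisms and smoothly surjective morphisms of closed embeddings, and commutes with pullbacks along smooth morphisms of relative schemes. The first two are clear from the blow-up formula; the content is in the last, which reduces to the well-known compatibility of the Rees algebra $\bigoplus I^k$ with smooth base change and to the flatness of the blow-up away from the strict transform. Applying the extension formalism then produces a functor $M^\circ: \relativeartinstacks \to \artinstacks_{/\mathbb{P}^1}$ whose value on closed embeddings of schemes is the classical one.

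To verify the stated properties on a general $\euX \to \euY$, I would work smooth-locally. Choose a smooth surjection from a coproduct of closed embeddings of schemes $\coprod (U_\alpha \hookrightarrow V_\alpha) \twoheadrightarrow (\euX \to \euY)$ in $\relativeartinstacks$ (which exists since any relative Artin stack is smooth-locally a closed embedding of schemes, after unfolding the tower of smooth atlases). Since each of the desired properties — flatness over $\mathbb{P}^1$, the identification of the restriction over $\mathbb{A}^1$ with $\euY \times \mathbb{A}^1$, and the identification of the fiber over $\infty$ with $\Ccat_\euX \euY$ — is smooth-local on the source, it suffices to know them in the scheme case. The fiber-over-$\infty$ identification in particular uses the uniqueness part of \textbf{Theorem \ref{Theorem:introduction_there_exists_a_unique_normal_cone_functor_subject_to_properties}}: the functor $\euX \mapsto (M^\circ_\euX \euY)_\infty$ satisfies exactly the four axioms of the normal cone, so it must agree with $\Ccat$. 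The closed embedding $\euX \times \mathbb{P}^1 \hookrightarrow M^\circ_\euX \euY$ is likewise constructed by left Kan extending the corresponding functor on closed embeddings of schemes and verified to be a closed embedding smooth-locally.

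The main obstacle I expect is verifying that the scheme-level $M^\circ$ satisfies the smooth base change and smooth-descent conditions required by the adapted cosheaf formalism, simultaneously in $U$ and $V$ and along morphisms that are not necessarily closed embeddings. The subtle point is that the deformation space is built out of a blow-up, and blow-ups only commute with flat base change in favorable circumstances; here one must exploit that the relevant center $U \times \{\infty\}$ is a Cartier-like locus in the pullback and that we have already removed the strict transform, which is precisely where non-flatness of the blow-up concentrates. Once this is settled, flatness of $M^\circ_\euX \euY \to \mathbb{P}^1$ in the higher stacky setting follows by smooth descent of flatness, and the rest of the theorem falls out from the uniqueness statements established earlier in the paper.
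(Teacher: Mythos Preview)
Your overall strategy matches the paper's: define $M^\circ$ classically on pairs, verify the adapted-cosheaf axioms there, extend via the formalism of \S\ref{Section:adapted_cosheaves}, and then check the remaining properties smooth-locally. The identification of the fibre over $\infty$ via the uniqueness characterization of $\Ccat$ is also essentially what the paper does (phrased as: the comparison map is a natural transformation of adapted cosheaves which is an equivalence on pairs, hence everywhere).

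Where you diverge is in the verification of the adapted-cosheaf axioms on $\pairs$, which you flag as the main obstacle and propose to attack via direct analysis of the blow-up or Rees algebra under smooth base change. The paper avoids all of this with a one-line observation: since $M^\circ_U V \to \mathbb{P}^1$ is \emph{flat}, every property in question (smoothness, surjectivity, commutation with pullbacks along smooth maps) can be checked fibrewise over $\mathbb{P}^1$. The fibres are either $V$ (for $t \neq \infty$), where the axioms are trivial, or $\Ccat_U V$ (for $t = \infty$), where they hold by \textbf{Theorem~\ref{Theorem:there_exists_a_unique_normal_cone_functor_subject_to_properties}}. So the ``subtle point'' you anticipate about blow-ups and non-flatness simply does not arise: flatness over the base $\mathbb{P}^1$ lets you transport the already-proven properties of the normal cone directly to $M^\circ$. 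Your argument would work, but it duplicates effort already absorbed into the normal cone theorem.
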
 

We point out that the assumptions of being Artin in the classical sense and of the existence of a global resolution appearing in \cref{Theorem:existence_of_virtual_class_intro} stem only from the fact that we are not aware of a theory of \emph{integral} Chow groups for higher Artin stacks which has the needed properties. 

In the rational case, the needed Chow groups were constructed in great generality by Khan, see \cite{Khan}, so that our methods yield the needed virtual fundamental class for any choice of perfect obstruction theory. Similarly, these virtual fundamental classes always exist in $K$-theory, see \cref{Remark:virtual_fundamental_class_in_k_theory}. 

Lastly, the restriction to morphisms locally of finite type comes from the fact that any such morphism of higher Artin stacks admits a smooth surjection from a closed embedding of schemes. We believe it is likely that the normal cone satisfies the analogues of the axioms of \cref{Theorem:introduction_there_exists_a_unique_normal_cone_functor_subject_to_properties} with the class of smooth maps replaced by that of flat maps, as that is the case for the normal sheaf. If that was the case, the locally of finite type assumption could be removed throughout.

\subsection{Notation and conventions}

In the sequel we will use the term \emph{Artin stack} to refer to what we called a higher Artin stack in the introduction, see \cref{Definition:Artin_stacks}. Under this convention, classical Artin stacks correspond to what we call \emph{$1$-Artin} stacks. 

To tackle coherence difficulties inherent in working with functors valued in spaces, we will use the framework of \emph{$\infty$-categories}, as developed by Joyal and Lurie. The standard reference is \cite{higher_topos_theory}. 

Throughout this paper, we will be working over a fixed field $k$. Note that even though we work with $\infty$-categories, we will be indexing our stacks using the category of \emph{discrete} commutative $k$-algebras, which we denote by $\calg_{k}$. Derived analogues of commutative rings will appear only indirectly. 

\subsection{Relation to other works}

As mentioned in the introduction, Adeel Khan has constructed Chow groups of derived higher Artin stacks rationally, using motivic homotopy theory \cite{Khan}. In the same work, Khan associates a canonical virtual fundamental class to any quasi-smooth morphism of derived Artin stacks using the axiomatics of bivariant homology theories. It follows from his work that in this context, the image of Khan's fundamental class in the Chow group of the classical truncation agrees with the one constructed in this paper, where the perfect obstruction theory is the one induced by the chosen quasi-smooth derived enhancement.

\subsection{Acknowledgements}

We would like to thank Anthony Blanc and especially Barbara Fantechi for many conversations and support during the gestation period of this work. The second author would like to thank the first author for inviting him to work on this project. We would like to thank Jacob Lurie for sharing with us an early draft of his chapter on derived stacks in \cite{spectral_algebraic_geometry}.

Lastly, after putting this work on the Arxiv, Adeel Khan alerted us about \cite{Khan} which had been put up some days before ours, see above. We would like to thank Adeel for very graciously inviting us to Regensburg to speak about our work as well as for the time he took to explain his work to us.

\section{Preliminaries}

The goal of this section is to introduce the reader to the language we will use throughout this paper. Our aim is to simply collect all the background material that will be needed to understand the main results. To this end, we have taken a minimalist approach, providing references to more thorough treatments in the literature. 

\subsection{Higher Artin stacks}

In algebraic geometry, especially in moduli theory, one often cares about functors which are not naturally valued in sets, but rather in groupoids; such functors are then given geometric interpretation through the theory of algebraic stacks. 

For some purposes - in our case, of giving a well-behaved definition of an intrinsic normal cone of an algebraic stack - even the category of functors valued in groupoids is not sufficient \cite{lurie_thesis_dag}, \cite{homotopical_algebraic_geometry}. One is then naturally led to consider functors valued in \emph{spaces}; the geometric interpretation of such functors is given by the theory of \emph{higher} algebraic stacks, first studied by Simpson \cite{Simp}.

\begin{Remark}
For the purpose of this paper, the word \emph{space} will be synonymous with the word \emph{$\infty$-groupoid}; that is, an $\infty$-category where all of the morphisms are invertible \cite{higher_topos_theory}[1.2.5.1]. 

The need for a suitable theory of higher groupoids needed to define higher algebraic stacks was recognized by Simpson \cite{Simp}. More precisely, to define $n$-stacks one needs $n$-groupoids, which we can identify with $n$-truncated $\infty$-groupoids, or what we will call in this paper \emph{$n$-truncated spaces}. 
\end{Remark}

\begin{Definition}
The category $\affines$ of \emph{affine $k$-schemes} is the opposite of the category $\calg_{k}$ of discrete $k$-algebras. We will consider $\affines$ as a Grothendieck site with respect to the \'{e}tale topology. 
\end{Definition}
A scheme can be identified with a particular sheaf of sets over $\affines$; similarly, an algebraic stack over $k$ can be identified with an appropriate sheaf of groupoids. As explained above, when working with higher stacks, we instead allow sheaves valued in spaces. 

\begin{Definition}
A \emph{prestack} $X$ is a presheaf over $\affines$ valued in the $\infty$-category $\spaces$ of spaces; that is, it is a functor $X: \affines^{op} \rightarrow \spaces$. We say a prestack $X$ is a \emph{stack} if it is a sheaf with respect to the \'{e}tale topology. We denote the $\infty$-categories of (pre)stacks by $\prestacks$ and $\stacks$. 
\end{Definition}
Note that any set can be considered as a discrete space, so that any presheaf of sets gives rise to a prestack as above. Moreover, in this case the $\infty$-categorical sheaf condition reduces to the usual one. 

\begin{Remark}
Recall that if $X$ is a presheaf of sets on the site of affine schemes, then $X$ is a sheaf if and only if it preserves products and for any \'{e}tale surjection $U \rightarrow V$ of  affine $k$-schemes, the diagram $X(V) \rightarrow X(U) \rightrightarrows X(U \times_{V} U)$ is a limit. 

In the case of a presheaf of spaces, to only consider the two-fold intersections is not enough, and one instead requires that the whole diagram 

\begin{center}
$X(V) \rightarrow X(U) \rightrightarrows X(U \times _{V} U) \triplerightarrow X(U \times _{V} U \times _{V} U) \ldots$
\end{center}
induced by the \v{C}ech nerve of $U \rightarrow V$ is a limit diagram of spaces.
\end{Remark}

Note that in our definition of a stack, we allow sheaves of spaces, but we still index them by the classical category of discrete $k$-algebras, rather than a derived variant. Thus, we are working within the framework of classical, rather than derived, algebraic geometry. 

Nevertheless, a lot of the definitions we will work with are analogous to the ones which became standard in derived algebraic geometry. In particular, our notion of an Artin stack is analogous to the one appearing in Lurie's thesis \cite{lurie_thesis_dag}, see also \cite{Simp} and \cite{TV2} for earlier treatments.

\begin{Definition}
\label{Definition:Artin_stacks}
We define $n$-Artin stacks and smooth $n$-Artin stacks inductively as follows:

\begin{enumerate}
\item We say a morphism $ f: \euX \rightarrow \euY $ of stacks is a \emph{relative $0$-Artin stack} if for any map $ g: \spec(A) \rightarrow \euY$, the fiber product $ \spec(A) \times_\euY \euX$ is an algebraic space. 
\item We say that relative $0$-Artin stack $ f : \euX \rightarrow \euY$ is \emph{smooth} if each of the associated maps $ \spec(A) \times_\euY \euX \rightarrow \spec(A)$ is smooth as a morphism of algebraic spaces.
\item For $n > 0$, we say a morphism $ f: \euX \rightarrow \euY$ of stacks is a \emph{relative $n$-Artin stack} if for any map $ \spec(A) \rightarrow \euY$ there exits a smooth surjection $ U \rightarrow \spec(A) \times_\euY \euX$ which is a relative $(n-1)$-Artin stack, where $U $ is an algebraic space. 
\item We say that a relative $n$-Artin stack  $f : \euX \rightarrow \euY$ is \emph{smooth} if for every $\spec(A) \rightarrow \euX$ there exists a smooth surjection $ U \rightarrow \spec(A) \times_\euY \euX$ as in the previous item, such that $ U \rightarrow \spec(A)$ is a smooth morphism of schemes.
\item We say that a stack $\euX$ is an \emph{$n$-Artin stack} if it is a relative $n$-Artin stack over $ \spec(k)$ and will refer to an \emph{Artin stack} as a stack $\euX$ which is $n$-Artin for some $n$. 
\end{enumerate}
We denote the $\infty$-category  Artin stacks by $ \artinstacks$.
\end{Definition}

The next proposition is a collection of basic properties pertaining to higher Artin stacks. 

\begin{Proposition} 
\label{Proposition:properties_of_artin_morphisms}
We have that 

\begin{enumerate}
\item Any relative $n$-Artin stack is also a relative $m$-Artin stack for any $m \geq n$. 
\item A pullback of a (smooth) relative n-Artin stack is (smooth) relative n-Artin
\item Let $ \euX \overset{f}{\rightarrow} \euY \overset{g}{\rightarrow} \EuScript{Z}$ be a pair of composable morphisms. If both $f$ and $g$ are (smooth) relative $n$-Artin stacks, then so is $ g \circ f$. 
\item Suppose that $n > 0$ and that we are given morphisms $ \euX \rightarrow \euY \rightarrow \EuScript{Z} $, where $ \euX \rightarrow \euY$ is an $(n-1)$-submersion and $ \euX \rightarrow \EuScript{Z}$ is a relative $n$-Artin stack. Then $ \euY \rightarrow \EuScript{Z}$ is a relative $n$-Artin stack.
\item Let $\euX \overset{f}{\rightarrow} \euY \overset{g}{\rightarrow} \EuScript{Z}$ be a composable pair of morphisms and $ n \geq 1$. If $ g \circ f$ is a relative $(n-1)$-Artin stack and $g$ is a relative $n$-Artin stack, then $ f$ is a relative $(n-1)$-Artin stack.
\end{enumerate}
\end{Proposition}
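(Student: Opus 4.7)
The plan is to prove all five items simultaneously by induction on $n$, using the items at level $n-1$ together with the earlier items at level $n$ to drive each successive step. The base cases are immediate: for (1), if $\euX \to \euY$ is relative $0$-Artin, then each fiber $F = \spec(A) \times_\euY \euX$ is an algebraic space and $\id_F \colon F \to F$ exhibits it as a relative $1$-Artin morphism; for (2) at level $0$, the fibers of the pullback literally coincide with fibers of the original morphism and so remain algebraic spaces; for (3) at level $0$ one uses that a composition of representable morphisms is representable. For items (4) and (5) the minimal case is $n=1$, reducing to standard facts about classical Artin stacks.

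The inductive step for (2) is immediate, since for any $\spec(A) \to \euY'$ one has $\spec(A) \times_{\euY'} \euX' \simeq \spec(A) \times_\euY \euX$, so any smooth surjection from an algebraic space exhibiting $\euX \to \euY$ as relative $n$-Artin pulls back unchanged. For (3), given composable relative $n$-Artin morphisms $f, g$ and $\spec(A) \to \EuScript{Z}$, first cover $\spec(A) \times_\EuScript{Z} \euY$ by an algebraic space $V$ via a relative $(n-1)$-Artin smooth surjection, then cover $V$ by an \'{e}tale atlas $\coprod \spec(A_i) \to V$, and for each $i$ cover $\spec(A_i) \times_\euY \euX$ by an algebraic space using that $f$ is relative $n$-Artin. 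Composing these gives the desired smooth surjection from an algebraic space onto $\spec(A) \times_\EuScript{Z} \euX$; it is relative $(n-1)$-Artin by the induction hypothesis applied to (1), (2), (3) at level $n-1$. Item (4) uses the same bootstrap: pull back the $(n-1)$-submersion $\euX \to \euY$ along $\spec(A) \times_\EuScript{Z} \euY \to \euY$, then compose with the relative $(n-1)$-Artin smooth cover of $\spec(A) \times_\EuScript{Z} \euX$ to obtain the required smooth surjection onto $\spec(A) \times_\EuScript{Z} \euY$.

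The main obstacle will be item (5). I would factor $f$ through the graph as $\euX \xrightarrow{\Gamma_f} \euX \times_\EuScript{Z} \euY \xrightarrow{\pi_2} \euY$. The projection $\pi_2$ is the base change of $gf$ along $g$ and is therefore relative $(n-1)$-Artin by (2). The graph $\Gamma_f$ is the base change of the relative diagonal $\Delta_g \colon \euY \to \euY \times_\EuScript{Z} \euY$ along $(f, \id_\euY)$, so the proof hinges on the auxiliary ``diagonal lemma'' asserting that $\Delta_g$ is relative $(n-1)$-Artin whenever $g$ is relative $n$-Artin. I would establish this lemma by a separate induction on $n$: for $n=1$ it is the representability of the diagonal of a classical Artin stack, and for $n \geq 2$ one reduces via fiber products of smooth atlases of $\euY$ over $\EuScript{Z}$ to the diagonal of such an atlas, invoking the inductive hypotheses for (3) and the diagonal lemma at lower level. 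Once the diagonal lemma is in hand, applying (3) to the factorization of $f$ yields (5). The genuine delicacy is concentrated in this diagonal lemma and in the descent step hidden in the inductive proof of (3); the remainder is essentially bookkeeping that unwinds the inductive definition of a relative $n$-Artin morphism.
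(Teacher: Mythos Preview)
The paper does not actually prove this proposition: it simply cites \cite{lurie_thesis_dag}[5.1.4] and moves on. Your proposal, by contrast, sketches out the standard inductive argument that one finds in Lurie's thesis and similar treatments, and the outline is correct. The factorization of $f$ through its graph and the reduction of item (5) to a diagonal lemma (that the diagonal of a relative $n$-Artin morphism is relative $(n-1)$-Artin) is exactly the right idea, and your identification of this lemma as the technical heart of the argument is accurate. So there is no meaningful comparison of approaches to make here; you have supplied what the paper deferred to a reference.
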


\begin{proof}
This is \cite{lurie_thesis_dag}[5.1.4].
\end{proof}
Observe that one of the pleasant consequences of \cref{Proposition:properties_of_artin_morphisms} is that any morphism $ f: \euX \rightarrow \euY$ of $n$-Artin stacks is automatically a relative $n$-Artin stack.

\begin{Example} 
\label{Example:example_of_higher_artin_stack}
Let $\mathbb{G}_m$ denote the mulitplicative group scheme over $k$. Classically, we can form the quotient $1$-Artin stack $ \deloop \mathbb{G}_m : = [ \spec(k) // \mathbb{G}_m]$. From a homotopy-theoretic perspective, the stack $\deloop \mathbb{G}_m $ is the \'{e}tale sheafification of the presheaf
\begin{eqnarray*}
B \mathbb{G}_m : \affines^{\op} & \rightarrow &  \spaces \\
R & \mapsto  & K(R^\times, 1)
\end{eqnarray*}
where $ K(R^\times, 1)$ is the first Eilenberg-Maclane space of the abelian group of units of $R$.  Since $\mathbb{G}_m$ is abelian,  it is very natural to consider prestacks 
\begin{eqnarray*}
R & \mapsto & K(R^\times, n)
\end{eqnarray*}
for any $ n \geq 1$. We define $ \deloop^n \mathbb{G}_m$ to be \'{e}tale sheafifcation of the presheaf defined by the above formula, one can show that it is an $n$-Artin stack. 

To see this in the basic case of $n =2$, note that we have an equivalence of stacks

\begin{center}
$B^2 \mathbb{G}_m \simeq \varinjlim \ (\ldots \ \deloop \mathbb{G}_m \times \deloop \mathbb{G}_m \triplerightarrow \deloop \mathbb{G}_m  \rightrightarrows \spec(k)).$
\end{center}
We claim that the induced map $\spec(k) \rightarrow \deloop^2 \mathbb{G}_m$ is an $2$-submersion, it is clearly surjective. Furthermore, since the diagram
\begin{center}
	\begin{tikzpicture}
		\node (TL) at (0, 1.3) {$\deloop \mathbb{G}_m$};
		\node (TR) at (2, 1.3) {$\spec(k)$};
		\node (BL) at (0, 0) {$\spec(k)$};
		\node (BR) at (2, 0) {$\deloop^2 \mathbb{G}_m.$};
		
		\draw [->] (TL) -- (TR);
		\draw [->] (TL) -- (BL);
		\draw [->] (TR) -- (BR);
		\draw [->] (BL) -- (BR);
		\end{tikzpicture}
\end{center}
is a pullback diagram and the maps $ \deloop \mathbb{G}_m \rightarrow *$ are smooth relative $1$-Artin stacks on the account of their fibers being $\mathbb{G}_m$, we conclude that $\spec(k) \rightarrow \deloop^2 \mathbb{G}_m$ is a $2$-submersion. More generally, in the discussion above we could replace $\mathbb{G}_m$ with any smooth abelian group scheme.
\end{Example}

\begin{Definition}
\label{Definition:property_local_on_both_source_and_target}
Suppose that $P$ is a property of morphisms of schemes over $k$ which is local both in the source and target in the smooth topology. Then, we say a morphism $f: \euX \rightarrow \euY$ of Artin stacks \emph{has property P} if for any $\spec(A) \rightarrow \euY$ there exists a smooth surjection $S \rightarrow \spec(A) \times_{\euY} \euX$ from a scheme such that $S \rightarrow \spec(A)$ has property $P$.  
\end{Definition}

\begin{Example}
Properties local both in the source and target in the smooth topology to which we might want to apply \cref{Definition:property_local_on_both_source_and_target} include being \emph{locally of finite type}, \emph{flat} and \emph{smooth}. Note that in the smooth case the resulting notion will coincide with that of a smooth relative Artin stack of \cref{Definition:Artin_stacks}, as expected.
\end{Example}
A lot of the constructions in this note will be done relative to a fixed stack, that is, will take place in the overcategory $\stacks _{/\euX}$. This $\infty$-category can be itself described as an $\infty$-category of sheaves in a standard way, as we now describe. 

\begin{Remark}
\label{Remark:description_of_overcategory_of_a_stack_as_sheaves}
If $\C$ is a small $\infty$-category and $\euX \in \presheaves(\C)$ is a presheaf, there is a canonical equivalence 
\begin{equation*}
\presheaves(\C)_{/ \euX} \simeq  \presheaves(\C_{/\euX}),
\end{equation*}
between the overcategory of the presheaves and presheaves on the overcategory \cite{higher_topos_theory}[5.1.6.12]. Under this equivalence, an object $\eF \in \presheaves(\C)_{/\euX} $ corresponds to an object which assigns to a morphism $f: U \rightarrow \euX(U)$, which we can identify with a point $f \in X(U)$, the fiber product $\{f\} \times_{\euX(U)} \widetilde{\eF}(U)$. Moreover, if $\C$ is an $\infty$-site, then there is an induced topology on the overcategory $\C_{/\euX}$ and the above equivalence restricts to one of the form $\sheaves(\C)_{/\euX} \simeq \sheaves(\C_{/\euX})$. 

In our situation, we will take $\C$ to be the site $\affines$, and so we deduce that for an arbitrary stack $\euX \in \stacks$ there is a canonical equivalence $\stacks_{/\euX} \simeq \sheaves(\affines_{/\euX})$. In this note we will use this equivalence implicitly, blurring the distinction between the two $\infty$-categories.  
\end{Remark}

\subsection{Quasi-coherent sheaves}

If $R$ is a ring, we can associate to it the derived $\infty$-category $\dcat(R)$, which is an $\infty$-categorical enhancement of the classical unbounded derived category. This $\infty$-category is stable and admits a canonical $t$-structure whose heart $\dcat(R)^{\heartsuit} := \dcat(R) _{\geq 0} \cap \dcat(R) _{\leq 0}$ is given by the the abelian category of $R$-modules. 

One advantage of working with stable $\infty$-categories, rather than triangulated categories, is that the former can be glued together in a controlled manner. This allows one to give a transparent definition of a quasi-coherent sheaf on an Artin stack, which we now review. 

\begin{Definition}
\label{Definition:qcoh_sheaves_on_a_stack}
Let $\euX \in \stacks$ be a stack. We define the stable $\infty$-category $\qcoh(\euX)$ of quasi-coherent sheaves on $\euX$ as the limit 
\begin{center}
$\qcoh(\euX) := \underset{\spec(A) \rightarrow \euX} {\varprojlim} \dcat(A)$
\end{center}
taken over the category of affine schemes equipped with a map into $\euX$, with the maps between module $\infty$-categories given by extension of scalars. 
\end{Definition}

\begin{Example}
If $\euX \simeq \spec(A)$ is affine, then the $\infty$-category of affines over $\euX$ has a terminal object given by the identity and we obtain 

\begin{center}
$\qcoh(\spec(A)) \simeq \dcat(A)$.
\end{center}
In particular, notice that according to this convention a quasi-coherent sheaf on $\spec(A)$ is an object of the derived $\infty$-category rather than a discrete $A$-module. 
\end{Example}
According to \cref{Definition:qcoh_sheaves_on_a_stack}, a quasi-coherent sheaf $\eF$ on $\euX$ consists of an assignment of an object $\eF(\spec(A)) \in \dcat(A)$ for each map $\eta: \spec(A) \rightarrow \euX$, equivalently, for each point $\eta \in X(A)$. This data is required to be compatible in the sense that we have distinguished equivalences $B \otimes _{A} \eF(\spec(A)) \simeq \eF(\spec(B))$ for each composite $\spec(B) \rightarrow \spec(A) \rightarrow \euX$, as well as higher coherence data. 

More formally, we define $\qcoh(\euX)$ as follows. One can construct an $\infty$-category $\dcat$ whose objects are pairs $(A, M)$, where $A \in \calg_{k}$ is a discrete $k$-algebra and $M \in \dcat(A)$, and such that the obvious functor $\dcat \rightarrow \calg_{k}$ is a coCartesian fibration. Then, $\qcoh(\euX)$ is given by the $\infty$-category of CoCartesian sections of the pullback fibration $\dcat \times _{\calg_{k}} (\calg_{k})_{/\euX} \rightarrow (\calg_{k})_{/\euX}$. 

\begin{Example}
The \emph{structure sheaf} $\mathcal{O}_{\euX}$ of a stack $\euX$ is the quasi-coherent sheaf given by 

\begin{center}
$\mathcal{O}_{\euX}(\spec(A) \rightarrow \euX) := A$,
\end{center}
where we consider the right hand side as an element of the heart of $\dcat(A)$.
\end{Example}

\begin{Remark}
Note that \cref{Definition:qcoh_sheaves_on_a_stack} makes sense already when $\euX$ is a prestack, but one can show that the $\infty$-category of quasi-coherent sheaves is the same on a prestack and its stackification. In other words, formation of $\qcoh$ satisfies descent with respect to the \'{e}tale topology, in fact, even with respect to the flat topology \cite{spectral_algebraic_geometry}[6.2.3.1].
\end{Remark}

\begin{Remark}
If $\euX$ is Artin, then one can replace the indexing $\infty$-category in \cref{Definition:qcoh_sheaves_on_a_stack} by the category of those affines $\spec(A) \rightarrow \euX$ which are smooth over $\euX$, see \cite{GR1}[1.4.2].
\end{Remark}

As a limit of stable, presentable $\infty$-categories, $\qcoh(\euX)$ is stable and presentable for any stack $\euX$. Moreover, it is functorial; for any morphism $f: \euX \rightarrow \euY$ of stacks we have an induced adjunction 

\begin{center}
$f^{*} \dashv f_{*}: \qcoh(\euY) \leftrightarrows \qcoh(\euX)$.
\end{center}
Using the informal description given above, $f^{*}$ is defined by $(f^{*} \eF)(\spec(A)) := \eF(\spec(A))$, and its right adjoint exists for abstract reasons. Notice in particular that if $f: \spec(A) \rightarrow \euX$ is a map from an affine scheme, then as an object of $\qcoh(\spec(A)) \simeq \dcat(A)$, the pullback $f^{*} \eF$ corresponds to $\eF(\spec(A))$. 

The $\infty$-category $\qcoh(\euX)$ admits a canonical $t$-structure in which $\eF$ is connective if and only if $\eF(\spec(A))$ is connective for any morphism $\spec(A) \rightarrow \euX$. In general, this $t$-structure is not well-behaved, but the situation is much better in the Artin case. 

\begin{Lemma}
\label{Lemma:t_structure_on_qcoh_of_an_artin_stack}
Let $\euX$ be Artin. Then, $\qcoh(\euX)$ admits a $t$-structure in which a quasi-coherent sheaf is (co)connective if and only if for any smooth atlas $p: \spec(U) \rightarrow \euX$, the quasi-coherent sheaf $p^{*} \eF$ is (co)connective. 
\end{Lemma}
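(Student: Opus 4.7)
The plan is to leverage the remark preceding this lemma, which gives the description
\[\qcoh(\euX) \simeq \varprojlim_{\spec(A)\to\euX \text{ smooth}} \dcat(A)\]
for $\euX$ Artin. The transition functors in this limit, corresponding to smooth morphisms $\spec(B) \to \spec(A)$ over $\euX$, are given by derived base change $-\otimes_{A} B$. Since smooth morphisms are flat, these transition functors are $t$-exact with respect to the canonical $t$-structures on $\dcat(A)$ and $\dcat(B)$. This is the geometric input on which everything else rests.

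I then invoke the general principle that a limit of presentable stable $\infty$-categories along a diagram of $t$-exact left adjoints carries a canonical $t$-structure whose (co)connective part is the limit of the (co)connective parts. Concretely, one declares $\eF \in \qcoh(\euX)_{\geq 0}$ (resp.\ $\qcoh(\euX)_{\leq 0}$) if and only if $\eF(\spec(A))$ is (co)connective in $\dcat(A)$ for every smooth $\spec(A) \to \euX$. Truncation functors on the limit are induced by pointwise truncation; these assemble into a quasi-coherent sheaf precisely because truncation commutes with the $t$-exact pullback transition maps. Orthogonality and the remaining axioms of a $t$-structure are inherited pointwise from each $\dcat(A)$.

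It remains to upgrade this to the atlas-based characterization stated in the lemma, namely that it suffices to test connectivity and coconnectivity after pullback along any single smooth atlas $p\colon \spec(U) \to \euX$. Given any other smooth map $q\colon \spec(V) \to \euX$, form the fibre product $\spec(U) \times_{\euX} \spec(V)$ and choose a smooth surjection from an affine $\spec(W)$. The resulting projections $\spec(W) \to \spec(U)$ and $\spec(W) \to \spec(V)$ are both smooth, and the latter is in addition a surjection, hence faithfully flat. Assuming $p^{*}\eF$ lies in $\dcat(U)_{\geq 0}$, its further pullback to $\spec(W)$ lies in $\dcat(W)_{\geq 0}$ by $t$-exactness of smooth base change, and by faithfully flat descent this is equivalent to $q^{*}\eF$ lying in $\dcat(V)_{\geq 0}$.

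The essential input, and where the substantive work really lives, is this faithfully flat descent statement for (co)connectivity in the unbounded derived $\infty$-category. It is standard and ultimately reduces to the fact that for a faithfully flat morphism $A \to B$ of discrete rings, the base change $M \mapsto M\otimes_{A}B$ commutes with homotopy groups and reflects vanishing; the analogous assertion at the level of $\infty$-category limits is then obtained by combining this with the flat descent for $\qcoh$ cited in the previous subsection.
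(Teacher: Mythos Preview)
The paper does not give an independent argument here; it simply cites Lurie's thesis \cite{lurie_thesis_dag}[5.2.4]. Your outline is correct and is essentially the standard argument one finds behind that citation: use the smooth-indexed description of $\qcoh(\euX)$, observe that the transition functors are $t$-exact because smooth implies flat, obtain the $t$-structure on the limit by pointwise truncation, and then reduce the atlas-wise characterization to the limit-wise one via faithfully flat descent of (co)connectivity. The only point worth flagging is that ``faithfully flat descent for (co)connectivity'' here is really the elementary fact that for $A \to B$ faithfully flat and $M \in \dcat(A)$ one has $h_i(M \otimes_A^L B) \simeq h_i(M) \otimes_A B$, so vanishing is reflected; no appeal to unbounded descent for $\qcoh$ is actually needed at this step.
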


\begin{proof}
This is \cite{lurie_thesis_dag}[5.2.4]. 
\end{proof}
Note that by definition, the pullback functor $f^{*}: \qcoh(\euY) \rightarrow \qcoh(\euX)$ preserves connective objects, which implies formally that its right adjoint $f_{*}$ preserves coconnective objects. If $f$ is a smooth morphism of Artin stacks, then by \cref{Lemma:t_structure_on_qcoh_of_an_artin_stack} above $f^{*}$ also preserves coconnectivity.

\begin{Definition} 
\label{Definition:quasi_coherent_sheaves_with_a_given_property}
Let $P$ be a property of objects of the derived $\infty$-category of a ring which is stable under arbitrary base-change. Then, if $\euX$ is a stack and $\eupsilon \in \qcoh(\euX)$, we say $\eupsilon$ \emph{has property $P$} if $f^*\eupsilon \in \dcat(A)$ has property $P$ for any $f: \spec(A) \rightarrow \euX$. 
\end{Definition}

The properties to which \cref{Definition:quasi_coherent_sheaves_with_a_given_property} applies which will be of interest to us are the properties of being perfect, perfect of given amplitude and perfect up to order $n$. These properties can be defined in a homotopy-invariant way, see \cite{higher_algebra}, \cite{spectral_algebraic_geometry}, but for the convenience of the reader we will rephrase them in terms of chain complexes. we remind the reader that we are using \emph{homological} grading conventions. 

\begin{Definition} 
Let $A$ be a discrete $k$-algebra and $M$ an object of $\dcat({A})$. We say that $M$ is
\begin{enumerate}
\item \emph{perfect} if it can be represented by a bounded chain complex of finitely generated projectives. 
\item \emph{perfect of amplitude $[a, b]$} if it can be represented by a bounded chain complex of finitely genereated projectives which can be chosen to vanish outside of degrees $d \in [a, b]$.
\item \emph{perfect to order $n$} if it can be represented by a chain complex which is bounded from below and consists of finitely generated projectives in degrees $d \leq n$.
\end{enumerate}
\end{Definition} 

\subsection{Cotangent complex}

Throughout the paper, we will need some basic properties of the cotangent complex. The latter is most naturally defined and constructed in the setting of derived algebraic geometry, and since several thorough references exist in the latter context, we will keep our exposition to the minimum. 

\begin{Remark}[Derived algebraic geometry in this paper]
While we introduce the cotangent complex in the language of derived algebraic geometry, as is most natural, the rest of the paper will not use any derived notions. A reader already familiar with the cotangent complex can safely skip this subsection. 
\end{Remark}

The construction of the cotangent complex for commutative rings first appeared in the work of Andr\'e and Quillen \cite{Andre}, \cite{Quillen}. Roughly speaking given a commutative $k$-algebra $A$ the cotangent complex $L_{A/k}$ controls the deformation theory $A$. More precisely there is a bijection between $\Ext_A^1 (L_{A/k}, A)$ and infinitesimal deformations of $A$ and the obstruction to pass to a higher order deformations lives in $\Ext_A^2(L_{A/k}, A)$. This was later globalized to the case of schemes by Grothendieck and Illusie \cite{Grothendeick_cotangent}, \cite{Illusie}.

In order to construct the cotangent complex of (higher) Artin stacks rigorously, it is useful to be able to find a universal property that characterizes it uniquely. To this end we take our inspiration from the K\"ahler differentials:

The K\"ahler differentials enjoy a universal property which stems from the fact that given a $k$-algebra $A$ and an $A$-module $M$ the $A$-module $\Omega_{A/k}$ corepresents $k$-linear derivations from $A$ to $M$. Already in this case it follows from the work of Andr\'e and Quillen that
\begin{equation*}
   \hom_{\calg_{A}} (A, A\oplus M) \simeq \hom_A(\Omega_{A/k}, M) \simeq \Ext^0 _A (L_{A/k}, M).
\end{equation*}
Moreover, they show that 
\begin{equation*}
    \hom_{\mathcal{H}(A)} (A, A \oplus M[i]) \simeq \Ext^i _A(L_A, M)
\end{equation*}
Where $\mathcal{H}(A)$ is the homotopy category of simplicial $A$-algebras and $  A \oplus M[i]$ is the trivial square zero extension of $A$ by the Eilenberg-Maclane space $K(M, i)$. In particular $A \oplus M[i]$ is generally not discrete. This suggests that to formulate a universal property for the cotangent complex simplicial commutative rings and $\infty$-categories must come into the picture even if we only care about discrete commutative $k$-algebras. 

A \emph{derived stack} is an \'{e}tale sheaf on the opposite of the $\infty$-category $\calg_{k}^{an}$ of \emph{animated $k$-algebras}, where the latter is the $\infty$-category underlying the model category of simplicial commutative $k$-algebras \cite{spectral_algebraic_geometry} \cite{TV2}. Any commutative $k$-algebra determines a discrete animated ring, and through left Kan extension one obtains a functor $\iota: \stacks \hookrightarrow d\stacks$ which can be shown to be fully faithful. Moreover, for any stack $\euX$ in our sense we have $\qcoh(\euX) \simeq \qcoh(\iota \euX)$.

\begin{Definition}
\cite{spectral_algebraic_geometry}[17.2.4.2] One says that a morphism $\euX \rightarrow \euY$ of derived stacks \emph{admits an algebraic cotangent complex} if there exists an almost connective quasi-coherent sheaf $L_{\euX / \euY} \in \qcoh(\euX)$ such that for any animated $k$-algebra $A$, any point $\eta \in \euX(A)$ and any $M \in \dcat(A)_{\geq 0}$, there is a natural equivalence

\begin{center}
$\map_{\dcat(A)_{\geq 0}}(\eta^{*} L_{\euX / \euY}, M) \simeq \fib_{\eta} ( \euX(A \oplus M ) \rightarrow \euX(A) \times_{\euY(A)} \euY(A \oplus M))$.
\end{center}
\end{Definition}

In other words, the algebraic cotangent complex $L_{\euX / \euY}$ corepresents derivations in animated $k$-algebras.

\begin{Definition}
\label{Definition:cotangent_complex_of_a_morphism_of_stacks}
If $\euX \rightarrow \euY$ is a morphism of stacks, then we say it \emph{admits a cotangent complex} if the associated morphism $\iota \euX \rightarrow \iota \euY$ of derived stacks admits an algebraic cotangent complex. In this case, the \emph{cotangent complex} $L_{\euX / \euY}$ is the image of $L_{\iota \euX / \iota \euY}$ under the equivalence $\qcoh(\euX) \simeq \qcoh(\iota \euX)$.
\end{Definition} 
It follows from our definition that if $f: \euX \rightarrow \euY$ and $g: \euY \rightarrow \euZ$ are a composable pair of morphisms of stacks which admit cotangent complexes, then we have a canonical cofibre sequence 

\begin{center}
$f^{*} L_{\euY / \euZ} \rightarrow L_{\euX / \euZ} \rightarrow L_{\euX / \euY}$
\end{center}
of quasi-coherent sheaves on $\euX$ \cite{spectral_algebraic_geometry}[17.2.5.2]. However, some care must be taken with base-change properties.

\begin{Warning}
The inclusion $\calg_{k} \hookrightarrow \calg_{k}^{an}$ of $k$-algebras into animated $k$-algebras does not preserves pushouts, which are given by the tensor product in the source and the derived tensor product in the target. It follows that the embedding $\iota: \stacks \hookrightarrow d\stacks$ of stacks into derived stacks does not preserves pullbacks, and the cotangent complex of \cref{Definition:cotangent_complex_of_a_morphism_of_stacks} does not satisfy arbitrary base-change in the same way its derived analogue does. 
\end{Warning}

\begin{Remark}
\label{Remark:cotangent_complex_of_a_stack_satisfies_smooth_basechange}
One can show that the embedding $i: \stacks \hookrightarrow d\stacks$ commutes with pullbacks along all flat morphisms. It follows that the cotangent complex of \cref{Definition:cotangent_complex_of_a_morphism_of_stacks} satisfies flat base-change.
\end{Remark}
We will now give existence and finiteness statements for the cotangent complex.

\begin{Proposition}
\label{Proposition:relative_n_stacks_have_minus_n_connective_cotangent_complex}
Let $f: \euX \rightarrow \euY$ be a relative $n$-Artin stack. Then, $f$ admits a cotangent complex $L_{\euX / \euY}$ which is $(-n)$-connective and perfect to order $-1$. If $f$ is smooth, then $L_{\euX / \euY}$ is perfect of non-positive amplitude.  
\end{Proposition}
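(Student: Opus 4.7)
The natural approach is induction on $n$, with the inductive step organized around the cofiber sequence coming from a presenting smooth atlas and smooth descent for the cotangent complex.

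\emph{Base case $n=0$.} After base change along any $\spec(A)\to\euY$ the morphism $f$ becomes a locally of finite type morphism of algebraic spaces, for which the cotangent complex exists classically (Illusie). It is connective, with $\pi_{0}$ the sheaf of K\"ahler differentials; this gives $0$-connectivity and perfectness to order $-1$ for free. If $f$ is smooth, $L_{\euX/\euY}\simeq \Omega^{1}_{\euX/\euY}$ is a locally free sheaf of finite rank concentrated in degree $0$, hence perfect of non-positive amplitude.

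\emph{Inductive step.} Suppose the result holds for $(n-1)$-Artin morphisms and let $f:\euX\to\euY$ be a relative $n$-Artin stack. By smooth base change (\textbf{Remark \ref{Remark:cotangent_complex_of_a_stack_satisfies_smooth_basechange}}) and stability of the $n$-Artin property under pullback (\textbf{Proposition \ref{Proposition:properties_of_artin_morphisms}}), we may assume $\euY=\spec(A)$. Choose a smooth surjection $g:U\to\euX$ from an algebraic space $U$ which is itself relative $(n-1)$-Artin, and consider the composite $U\to\euX\to\spec(A)$. The candidate for $L_{\euX/\spec(A)}$ is determined by the cofiber sequence
\begin{equation*}
g^{*}L_{\euX/\spec(A)}\ \to\ L_{U/\spec(A)}\ \to\ L_{U/\euX},
\end{equation*}
in which $L_{U/\spec(A)}$ exists by the base case and $L_{U/\euX}$ exists by the inductive hypothesis applied to the smooth $(n-1)$-Artin morphism $g$. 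To produce $L_{\euX/\spec(A)}$ as an actual quasi-coherent sheaf, one argues that the inductively identified pullbacks descend along the Čech nerve of $g$; this is smooth (in fact flat) descent for the cotangent complex, and is the substantial technical input we import from \cite{lurie_thesis_dag}.

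\emph{Connectivity and finiteness.} With the cofiber sequence in hand the numerics are purely formal. The middle term is connective, the right-hand term is $(-(n-1))$-connective and perfect to order $-1$ by induction; hence the fiber $g^{*}L_{\euX/\spec(A)}$ is $(-n)$-connective and perfect to order $-1$. As $g$ is a smooth surjection, \textbf{Lemma \ref{Lemma:t_structure_on_qcoh_of_an_artin_stack}} transfers both properties from the pullback to $L_{\euX/\spec(A)}$ itself. In the smooth case we further arrange $U\to\spec(A)$ to be smooth; then $L_{U/\spec(A)}$ is locally free of finite rank in degree $0$ and $L_{U/\euX}$ is perfect of non-positive amplitude by induction. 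Two-out-of-three in the cofiber sequence, followed by smooth descent, yields perfectness and non-positive amplitude for $L_{\euX/\spec(A)}$.

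\emph{Main obstacle.} All of the $\pi_{*}$-estimates are routine consequences of the cofiber sequence and the behavior of connectivity under fibers and smooth descent; the genuine content is the existence of the cotangent complex on a higher Artin stack, i.e.\ the fact that the cofiber identified above actually descends to a quasi-coherent sheaf on $\euX$. This reduces the entire proposition to smooth descent for the cotangent complex in the derived setting, which we take from Lurie.
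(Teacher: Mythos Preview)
Your argument is correct and is essentially an unpacking of what the paper does by citation: the paper's own proof simply observes that a relative $n$-Artin morphism in its sense becomes $n$-representable in Lurie's sense under the embedding into derived stacks, and then invokes \cite{spectral_algebraic_geometry}[I.1.2.5.3, I.1.3.3.7] directly. Your induction on $n$ via the cofibre sequence of a smooth atlas, with existence reduced to smooth descent for the cotangent complex, is precisely the skeleton of Lurie's proof of those cited results, so the two approaches coincide in substance; you have just made the inductive mechanism explicit rather than deferring to the reference. One small slip: in the base case you call the morphism of algebraic spaces ``locally of finite type,'' but the proposition does not assume this; fortunately none of the conclusions you draw there (existence, connectivity, perfect to order $-1$) require it.
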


\begin{proof}
It is not difficult to see from our inductive definition that if $f$ is relative $n$-Artin, then the associated morphism $\iota (\euX \rightarrow \euY)$ of derived stacks is $n$-representable in the sense of Lurie. The two statements are then given by \cite{spectral_algebraic_geometry}[I.1.2.5.3, I. 1.3.3.7].
\end{proof}

\begin{Proposition}
\label{Proposition:cotangent_complex_perfect_to_order_0_for_locally_of_finite_type}
Suppose that $f: \euX \rightarrow \euY$ is morphism of Artin stacks which is locally of finite type. Then, $L_{\euX / \euY}$ is perfect to order $0$. 
\end{Proposition}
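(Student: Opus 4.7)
The plan is to upgrade \textbf{Proposition \ref{Proposition:relative_n_stacks_have_minus_n_connective_cotangent_complex}}, which gives only perfect-to-order-$(-1)$ in general, by one degree, using the locally of finite type hypothesis.

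First I would reduce to the case $\euY \simeq \spec(A)$ affine. The property of being perfect to order $n$ is preserved by flat base change (the flat tensor of a resolution by finitely generated projectives remains one), so by \textbf{Remark \ref{Remark:cotangent_complex_of_a_stack_satisfies_smooth_basechange}} it suffices to check the conclusion after pulling back along a smooth atlas of $\euY$. Then, by \textbf{Definition \ref{Definition:property_local_on_both_source_and_target}}, the locally of finite type hypothesis provides a smooth surjection $p: S \to \euX$ with $S$ a scheme such that $S \to \spec(A)$ is locally of finite type as a morphism of schemes.

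The key step exploits the cofibre sequence
\begin{equation*}
p^{*}L_{\euX/\spec(A)} \to L_{S/\spec(A)} \to L_{S/\euX}.
\end{equation*}
The classical statement for schemes shows that $L_{S/\spec(A)}$ is perfect to order $0$, since $\pi_{0}(L_{S/\spec(A)}) \simeq \Omega^{1}_{S/\spec(A)}$ is of finite type, while $L_{S/\euX}$ is perfect of non-positive amplitude by \textbf{Proposition \ref{Proposition:relative_n_stacks_have_minus_n_connective_cotangent_complex}}. Representing both $L_{S/\spec(A)}$ and $L_{S/\euX}$ by explicit projective resolutions and computing the fibre as a shifted cone then shows that $p^{*}L_{\euX/\spec(A)}$ inherits perfect-to-order-$0$. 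Smooth descent of the property, using that every affine $\spec(B) \to \euX$ can be refined \'{e}tale-locally through $p$, yields the same conclusion for $L_{\euX/\spec(A)}$.

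I anticipate the main obstacle to be the descent step, namely verifying that finite generation of $\pi_{0}$ on the smooth atlas transfers down to $\euX$ without any noetherian assumption; this ultimately rests on the commutation of $\pi_{0}$ with flat pullback together with the universal existence of \'{e}tale-local lifts through $p$.
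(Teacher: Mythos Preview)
Your proposal is correct and follows essentially the same route as the paper: reduce to affine target by smooth base-change, pull back along a smooth atlas $p$, and use the cofibre sequence $p^{*}L_{\euX/\spec(A)} \to L_{S/\spec(A)} \to L_{S/\euX}$ together with the smoothness of $p$ and the classical statement for schemes. The descent step you flag as a potential obstacle is handled in the paper simply by noting that ``perfect to order $0$'' is a smooth-local property, so no additional noetherian hypothesis is required.
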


\begin{proof}
Since the cotangent complex satisfies smooth base-change, we may assume that $\euY = Y$ is affine. Choose a smooth surjection $p: X \rightarrow \euX$ from a scheme, it is then enough to show that $p^{*} L_{\euX / Y}$ is perfect to order $0$. We have a cofibre sequence 

\begin{center}
$p^{*} L_{\euX / Y} \rightarrow L_{X / Y} \rightarrow L_{X / \euX}$,
\end{center}
and since the last term is perfect of non-positive amplitude, we see it is enough to show that $L_{X / Y}$ is perfect to order $0$. 

Since our notion of perfect of order $0$ is local, we may further assume that $X \rightarrow Y$ is a morphism of affines schemes of finite type. The statement is then clear, since $L_{X / Y}$ is connective and $h_{0}(L_{X / Y}) \simeq \Omega _{X / Y}^{0}$ is finitely generated. 
\end{proof}

We close with a basic example of a calculation of the cotangent complex.

\begin{Example} 
We will identify the cotangent complex of the stack $\deloop^{2} \mathbb{G}_{m}$ of \cref{Example:example_of_higher_artin_stack}. Let $i: \spec(k) \rightarrow \deloop^{2} \mathbb{G}_{m}$ be the base-point and consider the diagram

\begin{center}
	\begin{tikzpicture}
		\node (TLL) at (-2, 2) {$\spec(k)$};
		\node (TL) at (0, 1.3) {$\deloop \mathbb{G}_m$};
		\node (TR) at (2, 1.3) {$\spec(k)$};
		\node (BL) at (0, 0) {$\spec(k)$};
		\node (BR) at (2, 0) {$\deloop^2 \mathbb{G}_m.$};
		
		\draw [->] [bend left] (TLL) to (TR);
		\draw [->] [bend right] (TLL) to (BL);
		\draw [dashed, ->] (TLL) -- (TL) node [midway, above] {$\delta$};
		\draw [->] (TL) -- (TR) ;
		\draw [->] (TL) -- (BL);
		\draw [->] (TR) -- (BR)node [midway, right] {$i$};
		\draw [->] (BL) -- (BR) node [midway, below] {$i$};
		\end{tikzpicture}
\end{center}
where the square is a pullback. It follows that $i$ is smooth, and so \cref{Remark:cotangent_complex_of_a_stack_satisfies_smooth_basechange} implies that $\delta^{*} L_{\deloop \mathbb{G}_{m} / \spec(k)} \simeq L_{\spec(k) / \deloop^{2}\mathbb{G}_{m}}$. We then deduce from the cofibre sequence of cotangent complexes that
\begin{center}
$ i^*L_{\deloop^2 \mathbb{G}_m} \simeq \delta^*L_{\deloop \mathbb{G}_m}[-1] \simeq k[-2] $. 
\end{center}
In fact, one can show more generally that for any smooth abelian group scheme $G$ we have $i^*L_{\deloop^n G} \simeq \mathfrak{g}^\vee[-n]$, where $\mathfrak{g}$ is the Lie algebra.
\end{Example}

\section{The abelian cone associated to a quasi-coherent sheaf} 

In this section we study the \emph{abelian cone functor}, a contravariant analogue of the $h^1/h^0$ functor of Behrend and Fantechi. The main result of this section is \cref{Theorem:cone_over_tor_bounded_qcoh_sheaf_is_artin_and_smooth_for_perfects}, which establishes that the abelian cone is Artin under certain conditions.

 \begin{Definition}
 \label{Definition:abelian_cone_associated_to_a_qcoh_sheaf}
 Let $\euX$ be a stack and $\eupsilon \in \qcoh(\euX)$ a quasi-coherent sheaf. Then, the \emph{abelian cone} associated to $\eupsilon$ is the prestack over $\euX$ defined by the formula 
 
\begin{center}
$\mathbb{V}_{\euX} (\spec(A) \overset{f}{\rightarrow} \euX) := \map_{\dcat(A)} ( f^* \eupsilon , A),$
\end{center}
where the latter is the mapping space in $\qcoh(\spec(A)) \simeq \dcat(A)$. This construction yields the abelian cone functor $\mathbb{V}_{\euX}: \qcoh (\euX)^{\op} \rightarrow \prestacks_{/\euX}$.
\end{Definition}

We hope the next example eases the reader with the knowledge this is a straightforward extension of the construction of relative spectrum over a scheme. 

\begin{Example}
\label{Example:cone_of_a_classical_qcoh_sheaf_on_a_scheme} 
Let $S$ be a scheme and $\eupsilon \in \qcoh(S)^\heartsuit$ be a quasi-coherent sheaf in the classical sense. We claim that $\mathbb{V}_{S}(\eupsilon)$ is the relative spectrum of the symmetric algebra on $\eupsilon$.

We have $\mathbb{V}_{S}(\eupsilon)(\spec(A) \rightarrow S) = \map_{\dcat({A})}(f^{*} \eupsilon, A)$, where $f^{*}: \qcoh(S) \rightarrow \dcat(A)$ is the pullback functor between stable $\infty$-categories, which classically corresponds to the derived pullback in the sense that $h_{i} (f^{*} \eupsilon) \simeq R_{i} f^{*} \eupsilon$. Since $\eupsilon$ is connective, so is $f^{*} \eupsilon$, and since $A$ is a discrete commutative ring, it is also coconnective when considered as a module over itself. Since $(f^{*} \eupsilon)_{\leq 0} \simeq R_{0}f^{*} \eupsilon$, we deduce using the $t$-structure axioms that 

\begin{center}
$\map_{\dcat(A)} (f^*\eupsilon, A) \simeq \hom_{\Mod_{A}}(R_{0}f^{*} \eupsilon, A) \simeq \map_{\text{Sch}_{/S}}(\spec(A), \spec_{S}(\sym(R_{0}f^{*}\eupsilon)))$,
\end{center}
which is what we wanted to show. 
\end{Example}

\begin{Proposition}
Let $\euX$ be a stack. Then, $\mathbb{V}_{\euX}(\eupsilon)$ is a stack for any $\eupsilon \in \qcoh(\euX)$. 
\end{Proposition}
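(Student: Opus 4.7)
The plan is to verify étale descent directly from the definition, using flat descent for quasi-coherent sheaves to reduce everything to a manipulation of mapping spaces. Via the equivalence $\stacks_{/\euX} \simeq \sheaves(\affines_{/\euX})$ of \textbf{Remark \ref{Remark:description_of_overcategory_of_a_stack_as_sheaves}}, it suffices to show that the assignment
\[
(\spec(A) \xrightarrow{f} \euX) \longmapsto \map_{\dcat(A)}(f^*\eupsilon, A)
\]
is an étale sheaf on the site $\affines_{/\euX}$. Functoriality (hence the fact that it is a presheaf) follows from the compatibility of the data making $\qcoh$ functorial with pullback, so the real content is the sheaf condition.

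First I would set up the descent diagram. Fix a map $f: \spec(A) \to \euX$ and an étale covering $p: \spec(B) \to \spec(A)$ over $\euX$, with Čech nerve $\spec(B^{\bullet})$, where $B^{n} = B^{\otimes_A (n+1)}$. By faithfully flat descent for quasi-coherent sheaves (cf.\ \cite{spectral_algebraic_geometry}[6.2.3.1]), $A \simeq \lim_{[n] \in \Delta} B^{n}$ as an object of $\dcat(A)$.

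Next I would use that $\map_{\dcat(A)}(f^*\eupsilon, -)$ carries limits in $\dcat(A)$ to limits of spaces, obtaining
\[
\map_{\dcat(A)}(f^*\eupsilon, A) \;\simeq\; \lim_{[n] \in \Delta} \map_{\dcat(A)}(f^*\eupsilon, B^{n}).
\]
For each $n$, the extension-of-scalars/restriction adjunction gives
\[
\map_{\dcat(A)}(f^*\eupsilon, B^{n}) \;\simeq\; \map_{\dcat(B^{n})}(B^{n} \otimes_{A} f^*\eupsilon, B^{n}) \;\simeq\; \map_{\dcat(B^{n})}(f_{n}^{*}\eupsilon, B^{n}),
\]
where $f_{n}: \spec(B^{n}) \to \euX$ is the induced map. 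By definition this last mapping space is $C_{\euX}(\eupsilon)(\spec(B^{n}) \to \euX)$, so combining the two displays yields the Čech limit condition.

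The only step requiring care is the compatibility of these identifications with the cosimplicial structure, which I would handle by noting that the adjunction equivalence is natural in the base ring and that the cosimplicial maps $B^{n} \to B^{n+1}$ are simply the structure maps induced by iterated tensoring, so the adjunction comparison is functorial on the nose. This is the main (mild) obstacle; it is a standard coherence argument, most cleanly phrased by remarking that both sides of the chain of equivalences are computed by the $\infty$-categorical right adjoint to a left fibration of module $\infty$-categories over $\calg_{k}^{\heartsuit}$, and any such right adjoint preserves limits.
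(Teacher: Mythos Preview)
Your proof is correct and follows essentially the same approach as the paper's: both reduce to the faithfully flat descent statement $A \simeq \varprojlim B^{\bullet}$ and then use the extension-of-scalars/restriction adjunction to identify the terms of the \v{C}ech diagram with the values of $C_{\euX}(\eupsilon)$. The only cosmetic difference is that the paper first pushes everything forward to $\qcoh(\euX)$ via $f_{*}$ and applies the limit-preservation there, whereas you stay in $\dcat(A)$ and apply the adjunction level by level; these are equivalent reorganizations of the same argument.
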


\begin{proof} We have to show that for any commutative diagram 
\begin{center}
\begin{tikzpicture}
		\node (TL) at (0, 1) {$\spec (A)$};
		\node (TR) at (3, 1) {$\spec(B)$};
		\node (BC) at (1.5, 0) {$\euX$};

		\draw [->] (TL) -- (TR) node [midway, above] {$p$};
		\draw [->] (TL) -- (BC) node [midway, below] {$f$};
		\draw [->] (TR) -- (BC) node [midway, below] {$g$};
	\end{tikzpicture},
\end{center}
where $p$ is an \'{e}tale covering of rings, the diagram 

\begin{center}
$\mathbb{V}_{\euX}(\spec(A) \rightarrow \euX) \rightarrow \mathbb{V}_{\euX}(\spec(B) \rightarrow \euX) \rightrightarrows \mathbb{V}_{\euX}(\spec(B \otimes _{A} B) \rightarrow \euX) \triplerightarrow \ldots$
\end{center}
induced by the \v{C}ech nerve of $p$ is a limit diagram of spaces. Unwinding the definitions, we see that we have to prove that 

\begin{center}
$\map_{\dcat(B)}(g^* \eupsilon, B) \rightarrow \map_{\dcat(A)} (f^*\eupsilon, A) \rightrightarrows  \map_{\dcat({A \otimes_B A}) } ((f \otimes f)^*\eupsilon, A \otimes_B A) \triplerightarrow \cdots$ 
\end{center}
is a limit. Using the adjunction between pullback and pushforward, it is enough to verify that 

\begin{center}
$g_*B \rightarrow  f_*A \rightrightarrows (f \otimes f)_*(A \otimes_B  A) \triplerightarrow \cdots$
\end{center}
is a limit diagram in $ \qcoh(\euX)$. Since $ g_*$ is a right adjoint and $f_{*} \simeq g_{*} \circ p_{*}$, this follows from the classical fact that if $p$ is faithfully flat, in particular \'{e}tale, then 

\begin{center}
$B \rightarrow p_*A \rightrightarrows (p \otimes p)_*(A \otimes_B  A) \triplerightarrow \cdots$
\end{center}
is a limit of $B$-modules. 
\end{proof}

Notice that since the mapping space between any two $A$-modules admits a canonical lift to a connective spectrum, in fact a connective $\mathbb{Z}$-module, the cone $C_{\euX}(\eupsilon)$ is canonically an abelian stack over $\euX$, that is, an abelian group object in $\stacks_{/\euX}$.  

\begin{Lemma}
\label{Lemma:cone_functor_takes_colimits_to_limits}
The cone functor $\mathbb{V}_{\euX}: \qcoh(\euX)^{op} \rightarrow \ab(\stacks _{/ \euX})$ takes colimits to limits.
\end{Lemma}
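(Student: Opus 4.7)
The plan is to reduce the statement to a pointwise computation and then use that mapping spaces send colimits in the first variable to limits.

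First I would observe that, by construction, for a fixed test point $f: \spec(A) \to \euX$ the evaluation
\begin{equation*}
\ab(\stacks_{/\euX}) \longrightarrow \ab(\spaces), \qquad \euZ \mapsto \euZ(\spec(A) \to \euX),
\end{equation*}
preserves all small limits. Indeed, limits in $\stacks_{/\euX} \simeq \sheaves(\affines_{/\euX})$ are computed pointwise (as in any $\infty$-topos of sheaves), the forgetful functor $\ab(\stacks_{/\euX}) \to \stacks_{/\euX}$ is a right adjoint and so preserves limits, and likewise $\ab(\spaces) \to \spaces$ preserves limits. So it is enough to show that for every $f: \spec(A) \to \euX$ the functor
\begin{equation*}
\qcoh(\euX)^{\op} \longrightarrow \spaces, \qquad \eupsilon \mapsto \map_{\dcat(A)}(f^{*}\eupsilon, A),
\end{equation*}
sends colimits to limits.

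Next, I would factor this functor as the composite of $f^{*}: \qcoh(\euX) \to \dcat(A)$ with $\map_{\dcat(A)}(-,A)$. The pullback $f^{*}$ is a left adjoint, hence preserves colimits, so if $\eupsilon \simeq \colim_{i} \eupsilon_{i}$ in $\qcoh(\euX)$ we have $f^{*}\eupsilon \simeq \colim_{i} f^{*}\eupsilon_{i}$ in $\dcat(A)$. The mapping space functor $\map_{\dcat(A)}(-,A): \dcat(A)^{\op} \to \spaces$ is the defining property of colimits in any $\infty$-category, taking them to limits of spaces, so
\begin{equation*}
\map_{\dcat(A)}(f^{*}\eupsilon, A) \simeq \lim_{i} \map_{\dcat(A)}(f^{*}\eupsilon_{i}, A).
\end{equation*}
Combining with the pointwise description above yields the desired equivalence $C_{\euX}(\colim_{i} \eupsilon_{i}) \simeq \lim_{i} C_{\euX}(\eupsilon_{i})$ in $\ab(\stacks_{/\euX})$.

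The argument is short because the two nontrivial inputs — that limits in $\ab(\stacks_{/\euX})$ can be detected pointwise after forgetting, and that hom-spaces turn colimits into limits — are both formal $\infty$-categorical facts. The only point requiring minor care is the lift to abelian stacks: I would remark that this lift exists because $\map_{\dcat(A)}(f^{*}\eupsilon, A)$ is naturally the underlying space of a connective $A$-module spectrum (the mapping spectrum in the stable $\infty$-category $\dcat(A)$), and the formation of mapping spectra in stable $\infty$-categories already sends colimits in the first variable to limits; this is what upgrades the pointwise calculation to one in $\ab(\stacks_{/\euX})$ rather than just $\stacks_{/\euX}$.
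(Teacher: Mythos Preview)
Your proposal is correct and follows essentially the same approach as the paper: both arguments reduce to the pointwise computation $C_{\euX}(\varinjlim \eupsilon_{i})(f) \simeq \map_{A}(f^{*}\varinjlim \eupsilon_{i}, A) \simeq \varprojlim \map_{A}(f^{*}\eupsilon_{i}, A)$ using that $f^{*}$ is a left adjoint. The paper simply states this chain of equivalences in one line without spelling out why pointwise checking suffices or addressing the abelian lift, whereas you have made these steps explicit.
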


\begin{proof}
It is enough to observe that for any $f: \spec(A) \rightarrow \euX$ we have 

\begin{center}
$\mathbb{V}_{\euX}(\varinjlim \eupsilon_{i})(f) \simeq \map_{A}(f^{*}\varinjlim \eupsilon_{i}, A) \simeq \map_{A}(\varinjlim f^{*}\eupsilon_{i}, A) \simeq \varprojlim \map_{A}(\eupsilon_{i}, A) \simeq \varprojlim \mathbb{V}_{\euX}(\eupsilon_{i})(f)$,
\end{center}
where we've used that $f^{*}: \qcoh(\euX) \rightarrow \dcat({A})$ is a left adjoint.
\end{proof} 

\begin{Lemma} 
\label{Lemma:base_change_for_cone_functor} 
The cone construction satisfies base change in the sense that for any morphism $ \varphi : \euX \longrightarrow \euY$ of stacks and $\eupsilon \in \qcoh(\euY)$ there's a canonical equivalence $\euX \times _{\euY} \mathbb{V}_{\euY}(\eupsilon) \simeq \mathbb{V}_{\euX}(\varphi^{*} \eupsilon)$.  
\end{Lemma}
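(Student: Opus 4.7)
The plan is to verify the equivalence at the level of presheaves on $\affines$ by evaluating at a test affine, using the description of $\stacks_{/\euX}$ as sheaves on $\affines_{/\euX}$ from \textbf{Remark \ref{Remark:description_of_overcategory_of_a_stack_as_sheaves}}. Under this identification, a stack over $\euX$ is determined by the functor on $\affines_{/\euX}$ which assigns to $s: \spec(A) \to \euX$ the fiber of the total space over the chosen point $s$. So the goal reduces to producing, naturally in $s$, an equivalence between the fiber of $\euX \times_{\euY} C_{\euY}(\eupsilon) \to \euX$ at $s$ and the value $C_{\euX}(\varphi^{*}\eupsilon)(s)$.

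First I would evaluate the right-hand side: directly from \textbf{Definition \ref{Definition:abelian_cone_associated_to_a_qcoh_sheaf}} we get $C_{\euX}(\varphi^{*}\eupsilon)(s) \simeq \map_{\dcat(A)}(s^{*}\varphi^{*}\eupsilon, A)$. For the left-hand side, the universal property of the fiber product identifies the fiber of $\euX \times_{\euY} C_{\euY}(\eupsilon) \to \euX$ over $s$ with the fiber of $C_{\euY}(\eupsilon) \to \euY$ over the composite $\varphi \circ s$, which by the definition of the abelian cone as a stack over $\euY$ is precisely $\map_{\dcat(A)}((\varphi \circ s)^{*}\eupsilon, A)$.

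The remaining ingredient is the canonical equivalence $s^{*}\varphi^{*}\eupsilon \simeq (\varphi \circ s)^{*}\eupsilon$ in $\dcat(A)$; this is part of the coherent functoriality of $\qcoh$ viewed as a functor from stacks to presentable stable $\infty$-categories, encoded in the coCartesian fibration $\dcat \to \calg_{k}^{\heartsuit}$ used to define $\qcoh$. Applying $\map_{\dcat(A)}(-, A)$ to this equivalence gives the required equivalence of spaces, and naturality in $s$ follows from the naturality of this comparison.

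The main thing to watch is the bookkeeping needed to ensure that the naturality of the identifications (universal property of the fiber product, pullback compatibility, and passage back and forth between $\stacks_{/\euX}$ and $\sheaves(\affines_{/\euX})$) is preserved coherently at the $\infty$-categorical level; however, all the equivalences used are themselves built from the data of $\qcoh$ and limits in $\spaces$, so no real obstacle arises beyond setting up the identifications carefully.
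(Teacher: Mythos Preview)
Your proposal is correct and is essentially the same argument as the paper's: evaluate both sides at an arbitrary $s:\spec(A)\to\euX$, identify the fiber product side with $C_{\euY}(\eupsilon)(\varphi\circ s)=\map_{\dcat(A)}((\varphi\circ s)^*\eupsilon,A)$, and use $(\varphi\circ s)^*\simeq s^*\varphi^*$. The paper records this in a single line of equivalences, while you spell out the identifications more carefully, but there is no substantive difference.
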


\begin{proof}
For any $f: \spec(A) \rightarrow \euX$ we have

\begin{center}
$(\euX \times _{\euY} \mathbb{V}_{\euY}(\eupsilon))(f) \simeq \mathbb{V}_{\euY}(\varphi \circ f) \simeq \map_{A}((\varphi \circ f)^{*} \eupsilon, A) \simeq \map_{A}(f^{*} \varphi^{*} \eupsilon A) \simeq \mathbb{V}_{\euX}(\varphi^{*} \eupsilon)(f)$,
\end{center}
which is what we wanted to show. 
\end{proof}

\begin{Lemma}
\label{Lemma:cone_of_qcoh_sheaf_only_depends_on_the_coconnective_part}
Let $\euX$ be a stack and $\eupsilon \in \qcoh(\euX)$. Then, $\mathbb{V}_{\euX}(\eupsilon) \simeq \mathbb{V}_{\euX}(\eupsilon_{\leq 0})$. 
\end{Lemma}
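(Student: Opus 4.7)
The plan is to exploit the fiber sequence associated to the truncation of $\eupsilon$ together with the observation that the structure sheaf $A$ on an affine test scheme is discrete, so mapping into it kills the strictly connective part.

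First, I would consider the canonical cofibre sequence
\begin{equation*}
\eupsilon_{>0} \longrightarrow \eupsilon \longrightarrow \eupsilon_{\leq 0}
\end{equation*}
in $\qcoh(\euX)$ coming from the $t$-structure described in \textbf{Lemma \ref{Lemma:t_structure_on_qcoh_of_an_artin_stack}}. For any $f\colon \spec(A) \to \euX$, the pullback $f^{*}\colon \qcoh(\euX) \to \dcat(A)$ is a left adjoint, hence preserves connective objects, and being exact it commutes with the shift $[-1]$; combining these two facts shows that it preserves $n$-connective objects for every $n$. In particular $f^{*}\eupsilon_{>0}$ is $1$-connective in $\dcat(A)$, and applying $f^{*}$ yields the cofibre sequence $f^{*}\eupsilon_{>0} \to f^{*}\eupsilon \to f^{*}\eupsilon_{\leq 0}$.

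Next, I would apply the functor $\map_{\dcat(A)}(-,A)$, which carries cofibre sequences to fibre sequences, producing
\begin{equation*}
C_{\euX}(\eupsilon_{\leq 0})(f) \longrightarrow C_{\euX}(\eupsilon)(f) \longrightarrow \map_{\dcat(A)}(f^{*}\eupsilon_{>0}, A).
\end{equation*}
Since $A$ is a discrete $k$-algebra, as an object of $\dcat(A)$ it lies in the heart, hence in $\dcat(A)_{\leq 0}$. The basic $t$-structure axiom that mapping from a $1$-connective object into a coconnective one is contractible therefore forces the right-hand term to be a point, so the left map is an equivalence.

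The output of this argument is a natural equivalence $C_{\euX}(\eupsilon)(f) \simeq C_{\euX}(\eupsilon_{\leq 0})(f)$ for every test affine $\spec(A) \to \euX$, and these equivalences assemble (via functoriality of $f^{*}$ and naturality of the truncation triangle) into the desired equivalence of abelian cones over $\euX$. I do not expect a genuine obstacle here; the only subtle point is verifying that the pullback preserves $1$-connectivity, which is immediate from the combination of right $t$-exactness and stability, and that the chosen $t$-structure on $\qcoh(\euX)$ of \textbf{Lemma \ref{Lemma:t_structure_on_qcoh_of_an_artin_stack}} is compatible with pullback along arbitrary affines — but for the truncation $\eupsilon_{>0}$, being connective is all that is needed, so even outside the Artin case the argument goes through provided one uses the canonical connective cover.
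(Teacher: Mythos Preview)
Your proof is correct. The paper's argument differs slightly in mechanics: rather than pulling back the cofibre sequence $\eupsilon_{>0} \to \eupsilon \to \eupsilon_{\leq 0}$ to $\dcat(A)$ and applying $\map_{\dcat(A)}(-,A)$, it uses the adjunction $f^{*} \dashv f_{*}$ to rewrite $\map_{\dcat(A)}(f^{*}\eupsilon, A) \simeq \map_{\qcoh(\euX)}(\eupsilon, f_{*}A)$, notes that $f_{*}A$ is coconnective (since $f_{*}$ is left $t$-exact), and then invokes the universal property of the truncation $\eupsilon \to \eupsilon_{\leq 0}$ directly in $\qcoh(\euX)$. Both routes rest on the same $t$-structure orthogonality between strictly connective and coconnective objects; the paper's version trades your check that $f^{*}$ preserves $1$-connectivity for the formally dual fact that $f_{*}$ preserves coconnectivity, and avoids the fibre-sequence bookkeeping in favour of a one-line application of the truncation adjunction.
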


\begin{proof} 
For $f: \spec(A) \rightarrow \euX$ we have 

\begin{center}
$\mathbb{V}_{\euX}(\eupsilon)(f) \simeq \map_{A}(f^{*} \eupsilon, A) \simeq \map_{\qcoh(\euX)}(\eupsilon, f_{*}A)$
\end{center}
and since $A$ is coconnective as a module over itself, the same is true for $f_{*}A$ and we write further 

\begin{center}
$\map_{\qcoh(\euX)}(\eupsilon, f_{*}A) \simeq \map_{\qcoh(\euX)}(\eupsilon_{\leq 0}, f_{*}A) \simeq \map_{\qcoh(\euX)}(f^{*} \eupsilon_{\leq 0}, A) \simeq \mathbb{V}_{\euX}(\eupsilon_{\leq 0})(f)$.
\end{center}
\end{proof}

\begin{Lemma}
\label{Lemma:abelian_cone_is_affine_if_and_only_qcoh_sheaf_connective}
Let $\euX$ be a stack and $\eupsilon \in \qcoh(\euX)$. If $\eupsilon$ is connective, then $\mathbb{V}_{\euX}(\eupsilon) \rightarrow \euX$ is affine. If $\eupsilon$ is bounded below, then the converse holds. 
\end{Lemma}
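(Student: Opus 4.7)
My plan is to combine the two reductions built into the preliminaries with an argument via the cotangent complex of the zero section. First, using Lemma \ref{Lemma:base_change_for_cone_functor} together with the pointwise nature of both affineness and connectivity, I will reduce to the case $\euX = \spec(A)$ is affine, writing $\eF := f^{*}\eupsilon \in \dcat(A)$. By Lemma \ref{Lemma:cone_of_qcoh_sheaf_only_depends_on_the_coconnective_part} we may further replace $\eF$ by $\eF_{\leq 0}$, noting that $\eF$ is connective precisely when $\eF_{\leq 0}$ is discrete (i.e., lies in the heart).

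The forward implication is then immediate from this reduction: for $\eF$ connective, $\eF_{\leq 0} \simeq \pi_0 \eF$ is in the heart, so Example \ref{Example:cone_of_a_classical_qcoh_sheaf_on_a_scheme} identifies $C_{\spec(A)}(\eF) \simeq \spec_A \sym(\pi_0 \eF)$, which is affine over $\spec(A)$.

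For the converse, suppose that $\eF$ is bounded below and $C_{\spec(A)}(\eF) \simeq \spec_A R$ is affine, and let $s_0: \spec(A) \to \spec(R)$ be the zero section, corresponding to the augmentation $R \to A$. The key step is to identify the pulled-back cotangent complex $s_0^{*} L_{R/A}$ with $\eF_{\leq 0}$; granted this, since $R$ is a classical $A$-algebra, $L_{R/A}$ is connective, hence so is $s_0^{*} L_{R/A} \simeq \eF_{\leq 0}$, which forces $\pi_i \eF = 0$ for all $i < 0$ as required. The identification will follow from the universal property of the cotangent complex applied to the computation of derivations at the zero section: by definition of the cone, the fibre of $C_{\spec(A)}(\eF)(\spec(A \oplus M)) \to C_{\spec(A)}(\eF)(\spec(A))$ at $s_0$ is naturally equivalent to $\map_A(\eF, M)$, and for $M$ connective this agrees with $\map_A(\eF_{\leq 0}, M)$ via the $t$-structure adjunction (both may be computed as $\map_A(\eF_{\leq 0}, \pi_0 M)$).

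I expect the main technical obstacle to be the identification $s_0^{*} L_{R/A} \simeq \eF_{\leq 0}$, which requires carefully matching the universal property of the cotangent complex (as developed around \textbf{Definition \ref{Definition:cotangent_complex_of_a_morphism_of_stacks}}) with the tangent space computation above; the bounded below hypothesis is exactly what ensures that $\eF_{\leq 0}$ is itself bounded below, so that the $t$-structure manipulations in the derivation computation all behave.
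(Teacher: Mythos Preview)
Your forward direction is correct and matches the paper's argument. The converse, however, has a genuine gap in the identification $s_0^* L_{R/A} \simeq \eF_{\leq 0}$.

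The universal property of the cotangent complex (as formulated around \textbf{Definition \ref{Definition:cotangent_complex_of_a_morphism_of_stacks}}) characterises $s_0^* L_{R/A}$ via maps to \emph{all} connective $A$-modules $M$, testing against the animated square-zero extension $A \oplus M$. Your computation of the derivation fibre as $\map_A(\eF, M)$ uses the identification $C_{\spec(A)}(\eF) \simeq \spec(R)$, but this is an equivalence of \emph{classical} stacks; it only tells you about $\spec(R)$ evaluated on discrete rings, hence only for $M$ discrete. For non-discrete connective $M$ there is no direct link between $\map_{\calg^{an}_A}(R, A\oplus M)$ and $\map_A(\eF, A\oplus M)$. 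Moreover, your $t$-structure claim that $\map_A(\eF_{\leq 0}, M) \simeq \map_A(\eF_{\leq 0}, \pi_0 M)$ is false in general: for $\eF_{\leq 0}$ coconnective and $M$ connective, the fibre $\map_A(\eF_{\leq 0}, \tau_{\geq 1} M)$ need not be contractible (it has vanishing $\pi_0$ but can have higher homotopy). Finally, even granting the functor-of-points match, the cotangent complex is only characterised among \emph{almost connective} objects; concluding $s_0^* L_{R/A} \simeq \eF_{\leq 0}$ would require knowing $\eF_{\leq 0}$ is connective, which is exactly what you are trying to prove.

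The paper avoids this circularity by arguing more directly: affineness of $C_{\spec(A)}(\eF)$ forces all of its values to be discrete sets, and applying this to $\spec(A\oplus M)$ for $M$ a discrete $A$-module shows $\map_{\dcat(A)}(\eF, M)$ is discrete for every such $M$; one then reads off connectivity of $\eF$ by taking $r$ minimal with $h_r(\eF)\neq 0$ and choosing $M$ so that $\pi_{-r}$ of the mapping space is nonzero. In effect, your approach restricted to discrete $M$ \emph{is} the paper's argument --- the detour through $L_{R/A}$ is not needed, and is where the errors enter.
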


\begin{proof}
By \cref{Lemma:base_change_for_cone_functor}, we can assume that $\euX \simeq \spec(A)$ is affine. To see the forward direction, observe that by \cref{Lemma:cone_of_qcoh_sheaf_only_depends_on_the_coconnective_part}, $\mathbb{V}_{\spec(A)}(\eupsilon) \simeq \mathbb{V}_{\spec(A)}(\eupsilon_{\leq 0})$. Since $\eupsilon_{\leq 0} \in \qcoh(\spec(A))^{\heartsuit}$ by assumption, the statement follows from \cref{Example:cone_of_a_classical_qcoh_sheaf_on_a_scheme}.

Now assume that $\eupsilon$ is bounded below and that $\mathbb{V}_{\euX}(\eupsilon) \rightarrow \euX$ is affine, in particular it has discrete fibers. We first claim that for an arbitrary map $ f: \spec(B) \rightarrow \euX$ and a $B$-module $M$, the space $\map_{\dcat(B)} (f^*\eupsilon , M)$ is discrete. 

If we form the trivial square-zero extension $B \oplus M$ and consider the composite morphism $g: \spec(B \oplus M) \rightarrow \euX$, it follows that the space
\begin{center}
$\mathbb{V}_\euX (\eupsilon)(\spec(B \oplus M)) = \map_{\dcat({B \oplus M})} ((B \oplus M ) \otimes^{L}_B f^*\eupsilon , B \oplus M) $
\end{center}
is discrete. Using the extension of scalars adjunction we can rewrite the right hand side as 

\begin{center}
$\map_{\dcat(B)} (f^*\eupsilon , B \oplus M) \simeq \map_{\dcat(B)} (f^*\eupsilon , B) \times \map_{\dcat(B)} (f^*\eupsilon , M)$
\end{center}
and thus we deduce that $\map_{\dcat(B)} (f^*\eupsilon , M)$ is discrete, as claimed.

Now suppose for a contradiction that $f^*\eupsilon$ is not connective and let $r < 0$ be the smallest integer such that $h_{r} (f^*\eupsilon) \neq 0$. Then 
\begin{center}
$\pi_{-r} \map_{\dcat(B)} (f^*\eupsilon, M) \simeq \pi_{0} \map_{\dcat(B)}((f^{*} \eupsilon)_{\leq r}, M) \simeq \Ext^{0}_{B}(h_{r} (f^*\eupsilon) , M)$
\end{center}
and since we've already shown that the left hand side vanishes for any $M$, we deduce that the same must be true for the right hand side. It follows that $h_{r}(f^{*} \eupsilon) = 0$, giving the desired contradiction and ending the argument.
\end{proof}

We are nearing the main result of this section, which states that the cone functor produces higher Artin stacks when it is applied to quasi-coherent sheaves satisfying certain finiteness conditions. We begin with a simple lemma.

\begin{Lemma}
\label{Lemma:abelian_cone_takes_certain_maps_of_qcoh_sheaves_to_atlases}
Let $\euX$ be a stack and let $\eupsilon \rightarrow \eupsilon^{\prime} \rightarrow \euP$ be a cofibre sequence of quasi-coherent sheaves on $\euX$ such that $\euP$ is perfect and of non-positive amplitude. Then, $\mathbb{V}_{\euX}(\eupsilon^{\prime}) \rightarrow \mathbb{V}_{\euX}(\eupsilon)$ is surjective and $\mathbb{V}_{\euX}(\eupsilon^{\prime}) \times _{C_{\euX}(\eupsilon)} \mathbb{V}_{\euX}(\eupsilon^{\prime}) \simeq \mathbb{V}_{\euX}(\eupsilon^{\prime}) \times_{\euX} \mathbb{V}_{\euX}(\euP)$.
\end{Lemma}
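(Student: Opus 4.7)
My plan is to apply the abelian cone functor to the given cofibre sequence and invoke \textbf{Lemma \ref{Lemma:cone_functor_takes_colimits_to_limits}}, which turns $\eupsilon \to \eupsilon' \to \euP$ into a fibre sequence of abelian group stacks
\[
C_\euX(\euP) \to C_\euX(\eupsilon') \to C_\euX(\eupsilon)
\]
over $\euX$, pointed by the canonical zero sections. Both assertions of the lemma will follow from formal manipulations with this fibre sequence.

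The fibre-product identity will be an instance of the general fact that any fibre sequence $K \to A \to B$ of abelian group objects in an $\infty$-category with finite products admits a canonical ``difference'' equivalence $A \times_B A \simeq A \times K$ sending $(a_1, a_2) \mapsto (a_1, a_2 - a_1)$, with the product on the right taken over the ambient base. Applied to our situation this gives $C_\euX(\eupsilon') \times_{C_\euX(\eupsilon)} C_\euX(\eupsilon') \simeq C_\euX(\eupsilon') \times_\euX C_\euX(\euP)$.

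For the surjectivity the plan is to argue pointwise. By \textbf{Lemma \ref{Lemma:base_change_for_cone_functor}} the cone commutes with base change, so it suffices to fix an arbitrary affine $\spec(B) \to \euX$ and show $\pi_0$-surjectivity of the evaluated map. There, the fibre sequence of cones is the underlying fibre sequence of the mapping spectra
\[
\rhom_B(\euP|_B, B) \to \rhom_B(\eupsilon'|_B, B) \to \rhom_B(\eupsilon|_B, B)
\]
in $\dcat(B)$. Since $\euP$ is perfect of non-positive amplitude, its dual $\rhom_B(\euP|_B, B) \simeq (\euP|_B)^\vee$ is perfect of non-negative amplitude, hence connective; in particular its $(-1)$st homotopy vanishes. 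Feeding this into the long exact sequence of homotopy of the fibre sequence of spectra, one reads off surjectivity of $\pi_0 \rhom_B(\eupsilon'|_B, B) \to \pi_0 \rhom_B(\eupsilon|_B, B)$ for every $B$. Sheafifying, this is precisely the surjectivity of $C_\euX(\eupsilon') \to C_\euX(\eupsilon)$ as a map of stacks.

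The subtlety I expect to be the main point to pin down is that the LES used must be the LES of a fibre sequence of \emph{spectra} in $\dcat(B)$ rather than merely of pointed spaces; the latter would terminate at $\pi_0$ and yield no information about surjectivity. The non-positive amplitude hypothesis is precisely what forces the spectrum-level fibre $\rhom_B(\euP|_B, B)$ to be connective, thereby killing the obstruction to lifting which naturally lives in its $(-1)$st homotopy sheaf.
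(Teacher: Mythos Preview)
Your proof is correct and follows essentially the same route as the paper. For surjectivity, the paper rotates the cofibre sequence to $\Sigma^{-1}\euP \to \eupsilon \to \eupsilon'$ and computes $\pi_0 C_{\euX}(\Sigma^{-1}\euP)(f) \simeq \Ext^1_B(f^*\euP,B)=0$, which is exactly your $\pi_{-1}\rhom_B(\euP|_B,B)=0$ read at the other end of the triangle; for the fibre product, the paper computes $\eupsilon' \oplus_{\eupsilon} \eupsilon' \simeq \eupsilon' \oplus \euP$ on the sheaf side and then applies \textbf{Lemma \ref{Lemma:cone_functor_takes_colimits_to_limits}}, which is the algebraic dual of your difference-map argument for abelian group objects.
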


\begin{proof}
By \cref{Lemma:base_change_for_cone_functor}, we can assume that $\euX \simeq \spec(A)$ is affine. If $f: \spec(B) \rightarrow \spec(A)$ is a morphism of affine schemes, then the cofibre sequence $\Sigma^{-1} \euP \rightarrow \eupsilon \rightarrow \eupsilon^{\prime}$ induces a fibre sequence 

\begin{center}
$\mathbb{V}(\eupsilon^{\prime})(f) \rightarrow \mathbb{V}(\eupsilon)(f) \rightarrow \mathbb{V}(\Sigma^{-1} \euP)(f)$
\end{center}
of spaces. By definition, we have $\pi_{0} \mathbb{V}(\Sigma^{-1} \euP)(f) \simeq \Ext^{1}_{B}(f^{*} \euP, B)$ and the latter group vanishes, since $B$ is connective and $f^{*} \euP$ is perfect and of non-positive amplitude. Through the long exact sequence of homotopy we deduce that 

\begin{center}
$\pi_{0}\mathbb{V}(\eupsilon^{\prime})(\spec(B) \rightarrow \spec(A)) \rightarrow \pi_{0} \mathbb{V}(\eupsilon)(\spec(B) \rightarrow \spec(A))$
\end{center}
is surjective, proving the first claim. 

The second claim follows from the fact that $\eupsilon^{\prime} \oplus _{\eupsilon} \eupsilon^{\prime} \simeq \eupsilon^{\prime} \oplus \euP$ and \cref{Lemma:cone_functor_takes_colimits_to_limits}.
\end{proof}

\begin{Remark}
\label{Remark:map_of_qcoh_sheaves_with_coconnective_perfect_cofibre_induces_cofibre_seq_of_abelian_stacks}
The conclusion of \cref{Lemma:abelian_cone_takes_certain_maps_of_qcoh_sheaves_to_atlases} can be alternatively rephrased as follows: if $\eupsilon \rightarrow \eupsilon^{\prime} \rightarrow \euP$ is a fibre sequence of quasi-coherent sheaves with $\euP$ perfect and of non-positive amplitude, then $\mathbb{V}_{\euX}(\euP) \rightarrow \mathbb{V}_{\euX}(\eupsilon^{\prime}) \rightarrow \mathbb{V}_{\euX}(\eupsilon)$ is a cofibre sequence of abelian stacks over $\euX$. 
\end{Remark}

\begin{Theorem} 
\label{Theorem:cone_over_tor_bounded_qcoh_sheaf_is_artin_and_smooth_for_perfects}
Let $\euX$ be  a stack and let $\eupsilon \in \qcoh(\euX)$ be perfect to order $-1$ and $(-n)$-connective, where $n \geq 0$. Then, the morphism $\mathbb{V}_{\euX}(\eupsilon) \rightarrow \euX$ is a relative $n$-Artin stack. If $\eupsilon$ is perfect and of non-positive amplitude, then $\mathbb{V}_{\euX}(\eupsilon)$ is smooth. 
\end{Theorem}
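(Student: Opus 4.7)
The approach is induction on $n$. Both the hypothesis on $\eupsilon$ and the conclusion about $C_\euX(\eupsilon) \to \euX$ are smooth-local on $\euX$ via \textbf{Lemma \ref{Lemma:base_change_for_cone_functor}} and \textbf{Definition \ref{Definition:Artin_stacks}}, so I may reduce to $\euX = \spec(A)$ affine and treat $\eupsilon$ as an object of $\dcat(A)$. The base case $n = 0$ is handled by \textbf{Lemma \ref{Lemma:abelian_cone_is_affine_if_and_only_qcoh_sheaf_connective}}: since $\eupsilon$ is connective, $C_\euX(\eupsilon) \to \euX$ is affine and hence relative $0$-Artin. If additionally $\eupsilon$ is perfect of non-positive amplitude then connectivity combined with amplitude $[a,0]$ forces $\eupsilon$ to be locally free of finite rank, and \textbf{Example \ref{Example:cone_of_a_classical_qcoh_sheaf_on_a_scheme}} identifies $C_\euX(\eupsilon)$ with the corresponding vector bundle, which is smooth over $\euX$.

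For the inductive step with $n \geq 1$, being perfect to order $-1$ guarantees that $h_{-n}(\eupsilon)$ is finitely generated, so I choose a map $\phi: A^{\oplus k}[-n] \to \eupsilon$ surjective on $h_{-n}$. A direct examination of the long exact sequence shows that the cofibre $\eupsilon_1 := \cofib(\phi)$ is $(-n+1)$-connective, and it remains perfect to order $-1$ as a cofibre of a map from a perfect complex. Rotating yields a cofibre sequence $\eupsilon \to \eupsilon_1 \to A^{\oplus k}[-n+1]$ whose third term is perfect of non-positive amplitude since $n \geq 1$, so \textbf{Lemma \ref{Lemma:abelian_cone_takes_certain_maps_of_qcoh_sheaves_to_atlases}} applies to yield a surjection $p: C_\euX(\eupsilon_1) \to C_\euX(\eupsilon)$ together with the identification $C_\euX(\eupsilon_1) \times_{C_\euX(\eupsilon)} C_\euX(\eupsilon_1) \simeq C_\euX(\eupsilon_1) \times_\euX C_\euX(A^{\oplus k}[-n+1])$ under which both projections to $C_\euX(\eupsilon_1)$ become base changes of $C_\euX(A^{\oplus k}[-n+1]) \to \euX$. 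The inductive hypothesis applied separately to $\eupsilon_1$ and to $A^{\oplus k}[-n+1]$ gives that $C_\euX(\eupsilon_1) \to \euX$ is relative $(n-1)$-Artin and that $C_\euX(A^{\oplus k}[-n+1]) \to \euX$ is smooth and relative $(n-1)$-Artin. Together these exhibit $p$ as a smooth surjective relative $(n-1)$-Artin morphism, i.e., an $(n-1)$-submersion in the sense of \textbf{Example \ref{Example:example_of_higher_artin_stack}}, so \textbf{Proposition \ref{Proposition:properties_of_artin_morphisms}}(4) applied to $C_\euX(\eupsilon_1) \to C_\euX(\eupsilon) \to \euX$ yields that $C_\euX(\eupsilon) \to \euX$ is relative $n$-Artin. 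For the smoothness assertion, I take $A^{\oplus k}$ to split off the lowest term of a projective representative of $\eupsilon$, so that $\eupsilon_1$ is perfect of amplitude $[-n+1, 0]$ and by induction $C_\euX(\eupsilon_1) \to \euX$ is smooth; combined with smoothness of $p$, smooth descent along the surjection $p$ delivers smoothness of $C_\euX(\eupsilon) \to \euX$.

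The main obstacle is verifying that $p$ itself is relative $(n-1)$-Artin, given only that it is smooth surjective with smooth relative $(n-1)$-Artin self-fibre-product. I expect to handle this by directly unpacking \textbf{Definition \ref{Definition:Artin_stacks}}: for any $\spec(B) \to C_\euX(\eupsilon)$, I pass to a smooth cover $\spec(B') \to \spec(B)$ which trivializes the associated torsor, producing an equivalence $\spec(B') \times_{C_\euX(\eupsilon)} C_\euX(\eupsilon_1) \simeq \spec(B') \times_\euX C_\euX(A^{\oplus k}[-n+1])$, and then pull back the inductive algebraic-space atlas of $C_\euX(A^{\oplus k}[-n+1]) \to \euX$ and compose to obtain the required smooth surjection from an algebraic space which is relative $(n-2)$-Artin, invoking the closure properties of \textbf{Proposition \ref{Proposition:properties_of_artin_morphisms}} throughout.
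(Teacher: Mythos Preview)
Your argument is correct, but it takes a different route from the paper's. You truncate $\eupsilon$ from the \emph{bottom}, peeling off $A^{\oplus k}[-n]$ and working with the cofibre $\eupsilon_1$, which is $(-n+1)$-connective; you then invoke induction on $\eupsilon_1$ and appeal to \textbf{Proposition~\ref{Proposition:properties_of_artin_morphisms}}(4). The paper instead truncates from the \emph{top}: after passing to the coconnective cover, one writes $\eupsilon$ as a complex $[E_0 \to E_{-1} \to \cdots \to E_{-n}]$ and takes the cofibre sequence $\eupsilon \to E_0 \to \euP$ with $\euP$ perfect of amplitude $[-(n-1),0]$. The point is that $C_\euX(E_0)$ is \emph{affine}, hence directly an algebraic space, so the surjection $C_\euX(E_0) \to C_\euX(\eupsilon)$ with fibre $C_\euX(\euP)$ verifies the definition of relative $n$-Artin on the nose, with only one inductive appeal (to $\euP$) and no need for \textbf{Proposition~\ref{Proposition:properties_of_artin_morphisms}}(4).

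Regarding your ``main obstacle'': it is easier than you make it. The surjectivity established in \textbf{Lemma~\ref{Lemma:abelian_cone_takes_certain_maps_of_qcoh_sheaves_to_atlases}} is surjectivity on $\pi_0$ of the values at every affine $\spec(B) \to \euX$, so any map $\spec(B) \to C_\euX(\eupsilon)$ already lifts to $C_\euX(\eupsilon_1)$ without passing to a cover. This immediately identifies $\spec(B) \times_{C_\euX(\eupsilon)} C_\euX(\eupsilon_1)$ with $\spec(B) \times_\euX C_\euX(A^{\oplus k}[-n+1])$, showing $p$ is smooth relative $(n-1)$-Artin by induction. Your torsor-trivialization plan works too, but the cover is unnecessary.
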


\begin{proof} 
By \cref{Lemma:base_change_for_cone_functor}, we may assume that $\euX \simeq \spec(A)$ is affine. To declutter the notation, for $\eupsilon \in \qcoh(\spec(A))$ let us denote the cone by $\mathbb{V}(\eupsilon) := \mathbb{V}_{\spec(A)}(\eupsilon)$. 

By \cref{Lemma:cone_of_qcoh_sheaf_only_depends_on_the_coconnective_part}, we can replace $\eupsilon$ by its coconnective cover. Since $\eupsilon$ is perfect to order $-1$ and $(-n)$-connective, it follows that $\eupsilon_{\leq 0} \in \qcoh(\spec(A)) \simeq \dcat(A)$ can be represented by a complex 

\begin{center}
$\ldots \rightarrow 0 \rightarrow E_{0} \rightarrow E_{-1} \rightarrow \ldots \rightarrow E_{-n} \rightarrow 0 \rightarrow \ldots$
\end{center}
of $A$-modules, where $E_{i}$ are finitely generated, projective for $i < 0$. If $\eupsilon$ is perfect and of non-positive amplitude, then we can additionally assume that $E_{0}$ is projective as well. 

Our proof will go by induction on $n$, the base case $n = 0$ following from \cref{Lemma:abelian_cone_is_affine_if_and_only_qcoh_sheaf_connective}. Thus, suppose that $n > 0$ and consider the cofibre sequence

\begin{center}
$\eupsilon \rightarrow E_0 \rightarrow \euP$
\end{center}
in $\qcoh(\spec(A))$, where $\euP \simeq [P_{-1} \rightarrow \ldots \rightarrow P_{-n}]$ with $P_{-1}$ concentrated in degree zero. Notice that $\euP$ is perfect and coconnective and so it follows from \cref{Lemma:abelian_cone_takes_certain_maps_of_qcoh_sheaves_to_atlases} that the map $\mathbb{V}(E_{0}) \rightarrow \mathbb{V}(\eupsilon)$ is surjective and 

\begin{center}
$\mathbb{V}(E_{0}) \times _{\mathbb{V}(\eupsilon)} \mathbb{V}(E_{0}) \simeq \mathbb{V}(E_{0}) \times_{\euX} \mathbb{V}(\euP)$.
\end{center}
Since $\mathbb{V}(\euP) \rightarrow \euX$ is smooth $(n-1)$-Artin by inductive assumption, the needed result follows.  
\end{proof}

\begin{Corollary}
Let $\euX$ be an Artin and let $\eupsilon$ be perfect to order $-1$. Then, $\mathbb{V}_{\euX}(\eupsilon)$ is Artin.
\end{Corollary}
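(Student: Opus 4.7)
The plan is to reduce to the affine case via smooth descent and then invoke \textbf{Theorem \ref{Theorem:cone_over_tor_bounded_qcoh_sheaf_is_artin_and_smooth_for_perfects}}. Since $\euX$ is Artin, \textbf{Definition \ref{Definition:Artin_stacks}} provides a smooth surjection $p : U \to \euX$ from an algebraic space, and by iterating and using that affines form a basis for the smooth topology, we may assume $U$ is a disjoint union of affine schemes. By \textbf{Lemma \ref{Lemma:base_change_for_cone_functor}}, $C_{U}(p^{*}\eupsilon) \simeq U \times_{\euX} C_{\euX}(\eupsilon)$, so the induced morphism $C_{U}(p^{*}\eupsilon) \to C_{\euX}(\eupsilon)$ is a smooth surjection, being the base change of $p$.

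On each affine chart $\spec(A) \hookrightarrow U$, the restriction $p^{*}\eupsilon|_{\spec(A)}$ is perfect to order $-1$ in $\dcat(A)$, so by definition it admits a representative which is a bounded-below complex with finitely generated projectives in degrees $\leq -1$. In particular, such a representative is $(-n)$-connective for some $n \geq 0$, so \textbf{Theorem \ref{Theorem:cone_over_tor_bounded_qcoh_sheaf_is_artin_and_smooth_for_perfects}} yields that $C_{\spec(A)}(p^{*}\eupsilon|_{\spec(A)}) \to \spec(A)$ is a relative $n$-Artin stack, and assembling over the cover exhibits $C_{U}(p^{*}\eupsilon)$ as an Artin stack.

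To conclude, one invokes \textbf{Proposition \ref{Proposition:properties_of_artin_morphisms}} (4) applied to the composable pair $C_{U}(p^{*}\eupsilon) \to C_{\euX}(\eupsilon) \to \spec(k)$: the first arrow is a smooth surjection (hence a submersion in the sense of the paper) and the composite is a relative $n$-Artin stack over $\spec(k)$ by the previous step, so $C_{\euX}(\eupsilon) \to \spec(k)$ is also a relative $n$-Artin stack, i.e., $C_{\euX}(\eupsilon)$ is Artin.

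The main obstacle I anticipate is ensuring a uniform integer $n$ across the charts: \textit{a priori}, the lower bound extracted from a local representative of ``perfect to order $-1$'' depends on the chart $\spec(A)$, so invoking \textbf{Theorem \ref{Theorem:cone_over_tor_bounded_qcoh_sheaf_is_artin_and_smooth_for_perfects}} chart-by-chart only produces local $n_{A}$-Artin structures. This is resolved by exploiting that smooth morphisms preserve connectivity and perfection to a given order by \textbf{Lemma \ref{Lemma:t_structure_on_qcoh_of_an_artin_stack}}, so a connectivity bound valid on a fixed chosen atlas propagates to a global uniform $n$.
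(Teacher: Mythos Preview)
Your argument is correct in outline but takes an unnecessary detour. \textbf{Theorem \ref{Theorem:cone_over_tor_bounded_qcoh_sheaf_is_artin_and_smooth_for_perfects}} is stated for an \emph{arbitrary} stack $\euX$, not just an affine one; the reduction to the affine case via \textbf{Lemma \ref{Lemma:base_change_for_cone_functor}} is already carried out inside its proof. The paper therefore argues in one line: Theorem \ref{Theorem:cone_over_tor_bounded_qcoh_sheaf_is_artin_and_smooth_for_perfects} gives that $C_{\euX}(\eupsilon) \rightarrow \euX$ is relative $n$-Artin, and since $\euX \rightarrow \spec(k)$ is Artin by hypothesis, part (3) of \textbf{Proposition \ref{Proposition:properties_of_artin_morphisms}} (closure under composition) yields that $C_{\euX}(\eupsilon)$ is Artin. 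Your passage to an atlas, followed by descent via part (4) of the same proposition, re-derives this from scratch.

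On the uniformity issue you raise: you are right that it is a genuine point, but your proposed fix does not resolve it. \textbf{Lemma \ref{Lemma:t_structure_on_qcoh_of_an_artin_stack}} says that $(-n)$-connectivity of $\eupsilon$ can be tested on a smooth atlas, so a bound valid on the atlas does descend to $\euX$; the problem is that your atlas $U = \bigsqcup_{\alpha} \spec(A_{\alpha})$ may have infinitely many components, and ``perfect to order $-1$'' only produces chartwise bounds $n_{\alpha}$ with no a priori reason to be uniformly bounded. So there is nothing to propagate. The paper's one-line proof has exactly the same lacuna: applying Theorem \ref{Theorem:cone_over_tor_bounded_qcoh_sheaf_is_artin_and_smooth_for_perfects} requires $\eupsilon$ to be $(-n)$-connective for some global $n$, which ``perfect to order $-1$'' alone does not supply. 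In every application in the paper (notably \textbf{Proposition \ref{Proposition:normal_sheaf_of_morphisms_of_artin_stacks_is_again_artin}}) the sheaf in question is the shifted cotangent complex of a relative $n$-Artin morphism, and \textbf{Proposition \ref{Proposition:relative_n_stacks_have_minus_n_connective_cotangent_complex}} supplies the required bound; so the corollary should be read with an implicit almost-connectivity hypothesis.
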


\begin{proof}
Immediate from \cref{Theorem:cone_over_tor_bounded_qcoh_sheaf_is_artin_and_smooth_for_perfects} and \cref{Proposition:properties_of_artin_morphisms}. 
\end{proof}

As mentioned at the beginning of the section, the abelian cone should be thought of as an analogue of the $h^{1}/h^{0}$ functor defined by Behrend and Fantechi, itself inspired by Deligne's work on Picard stacks. The following remark explores this connection futher.

\begin{Remark}
\label{Remark:comparision_of_h^1/h^0_with_cone}
A careful reader will notice quickly that $h^1/h^0$ is covariant, while the abelian cone functor of \cref{Definition:abelian_cone_associated_to_a_qcoh_sheaf} is contravariant. This is because the two are not really analogous, but rather dual to each other.

 The \emph{global section} functor $\Gamma$ which is the higher categorical analogue of $h^{1}/h^{0}$ associates to any $\eupsilon \in \qcoh(\euX)$ the prestack $\Gamma(\eupsilon)$ defined by the formula
 
 \begin{center}
 $(f: \spec(A) \rightarrow \euX) \mapsto \map_{\dcat(A)} (A, f^{*} \eupsilon)$
 \end{center}
One can show that the functor $ \Gamma$ is cocontinuous and takes values in stacks. One can then check directly that if $\euX$ is Deligne-Mumford stack, then the restriction $\Gamma |_{\qcoh(\euX)_{\leq 1}}$ coincides with the functor $h^1/h^0$ after suitable identifications.

When restricted to the full subcategory of almost perfect objects, the cone functor can be defined in terms of the global sections functor. That is, if $\euX$ is a stack, then there exists a commutative diagram
\begin{center}
	\begin{tikzpicture}
		\node (BL) at (0,0) {${{\qcoh(\euX)}^{\text{aperf}}}^{\op}$};
		\node (BC) at (6, 0) {$\qcoh(\euX)$};
		\node (BR) at (6, -1.3) {$ \stacks_{/\euX}$};

		\draw [->] (BL) -- (BC) node [midway, above] {$\underline{\map}_{\qcoh(\euX)}(- , \EuScript{O}_\euX)$};
		\draw [->] (BC) -- (BR) node [midway, right] {$\Gamma$};
		\draw [->] (BL) -- (BR) node [midway, below] {$\mathbb{V}_\euX(-)$};
	\end{tikzpicture}
\end{center}
where if $\eF \in \qcoh(\euX)$ is almost perfect then the quasi-coherent sheaf $ \underline{\map}_{\qcoh(\euX)}(\eF , \EuScript{O}_\euX) $ is defined so that we have a canonical equivalence 

\begin{center}
$\Omega^{\infty}\underline{{\map}}_{\qcoh(\euX)} ( \eF, \EuScript{O}_\euX) ( \spec(A)\overset{f}{\rightarrow} \euX) \simeq \map_{\dcat(A)} ( f^*\eF, A)$,
\end{center}
as in \cite{spectral_algebraic_geometry}[6.5.3]. 

If $\euX$ is Deligne-Mumford, then after taking different grading conventions into account, one sees that if $\eupsilon \in \qcoh(\euX)_{[0 ,1]}$ has coherent homology, the stack $ \Gamma(\underline{\map}_{\qcoh(\euX)} (\eupsilon[-1], \EuScript{O}_\euX))$ coincides with the one denoted by Behrend and Fantechi as $ h^1/ h^0 (\eupsilon^{\vee}) $. Thus, we deduce from the above analysis that the latter coincides with our $\mathbb{V}_\euX(\eupsilon[-1])$.
\end{Remark}

\section{Adapted cosheaves}
\label{Section:adapted_cosheaves}

In this section we introduce the notion of an \emph{adapted cosheaf}, which is a cosheaf which preserves pullbacks along a distinguished class of maps. Our main result is that such cosheaves are stable under left Kan extension to a larger category. 

\begin{Definition}
We will say that a class $S$ of morphisms in an $\infty$-category $\Ccat$ is a \emph{marking} if 

\begin{enumerate}
\item $S$ contains all equivalences and is stable under composition and 
\item $\Ccat$ admits pullbacks along morphisms in $S$ and $S$ is stable under such pullbacks.
\end{enumerate}
\end{Definition}
The usefulness of the concept of a marking is encapsulated in the following straightforward result. Recall that we say that an $\infty$-category $\Ccat$ has universal coproducts if for any $D^{\prime} \rightarrow D$ and a finite collection of maps $C_{i} \rightarrow D$, the canonical map $\bigsqcup C_{i} \times _{D} D^{\prime} \rightarrow (\bigsqcup C_{i}) \times _{D} D^{\prime}$ is an equivalence. 

\begin{Proposition}
\label{Proposition:marking_determines_grothendieck_topology}
Let $\Ccat$ be an $\infty$-category with universal coproducts and $S$ be a marking on $C$. Then, $\Ccat$ admits a Grothendieck topology in which a family $\{ C_{i} \rightarrow C \}$ is covering precisely when $\bigsqcup C_{i} \rightarrow C$ belongs to $S$. 
\end{Proposition}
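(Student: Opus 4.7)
The plan is to verify directly the three axioms for a basis of a Grothendieck topology, using only the two marking conditions together with universality of coproducts.

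First, the identity axiom is immediate: the singleton family $\{\id_C\}$ has associated coproduct the map $C \to C$, which is an equivalence and hence lies in $S$ by the first marking axiom.

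Second, for the stability axiom, suppose $\{C_i \to C\}_i$ is a covering family, so that $f \colon \bigsqcup_i C_i \to C$ belongs to $S$, and let $g \colon D \to C$ be an arbitrary morphism. Because coproducts in $\Ccat$ are universal, there is a canonical equivalence $\bigsqcup_i (C_i \times_C D) \simeq (\bigsqcup_i C_i) \times_C D$, and the second marking axiom guarantees that the pullback of $f$ along $g$ lies in $S$. Hence the pulled-back family $\{C_i \times_C D \to D\}_i$ is covering.

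Third, for the transitivity axiom, given a covering family $f \colon \bigsqcup_i C_i \to C$ in $S$ and, for each $i$, a covering family $g_i \colon \bigsqcup_j D_{ij} \to C_i$ in $S$, I factor the map induced by the composed family as
\[ \textstyle \bigsqcup_{i,j} D_{ij} \; \xrightarrow{\bigsqcup_i g_i} \; \bigsqcup_i C_i \; \xrightarrow{\;f\;} \; C. \]
The right hand arrow is in $S$ by hypothesis, so by the closure of $S$ under composition it suffices to show the left hand arrow lies in $S$. Using universal coproducts, pulling back $\bigsqcup_i g_i$ along each coproduct inclusion $C_k \to \bigsqcup_i C_i$ recovers $g_k$, and one concludes by combining pullback stability of $S$ with the disjointness of coproducts that accompanies universality in the $\infty$-categorical setting.

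The main obstacle is this last step: showing that a coproduct of $S$-maps is again in $S$. This is the only place where universality is used in more than a purely formal way, and it is where one has to make precise the interaction between coproducts and the marking; the preceding two axioms require only a straightforward unwinding of definitions.
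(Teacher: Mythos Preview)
The paper does not prove this directly; it simply cites \cite{spectral_algebraic_geometry}[A.3.2.1]. Your attempt at a direct verification therefore goes beyond what the paper offers.

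Your arguments for the identity and stability axioms are fine. The gap is exactly where you locate it, in the transitivity step: you need that if each $g_i \in S$ then $\bigsqcup_i g_i \in S$. But your proposed justification runs in the wrong direction. Pullback stability of $S$ says that base changes of an $S$-map are again $S$-maps; it does \emph{not} say that a map whose base changes along the coproduct inclusions all lie in $S$ must itself lie in $S$. That would be a locality or descent property of $S$, and nothing in the definition of a marking gives it to you. Disjointness of coproducts only tells you that the pullbacks are what you expect; it provides no mechanism for reassembling membership in $S$ from the pieces.

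In fact the step genuinely fails for markings as defined in the paper. Take $\Ccat$ to be finite sets and let $S$ consist of the bijections together with all maps out of $\emptyset$. One checks directly that this is a marking (it contains equivalences, is closed under composition, and is stable under pullback), and coproducts in finite sets are universal and disjoint. Yet $\id_{\{\ast\}} \in S$ and $(\emptyset \to \{\ast\}) \in S$, while their coproduct $\{\ast\} \simeq \{\ast\} \sqcup \emptyset \to \{\ast\} \sqcup \{\ast\}$ is neither a bijection nor has empty source, hence is not in $S$. Thus closure of $S$ under finite coproducts of maps is a genuine extra ingredient, not derivable from the two marking axioms and universality of coproducts alone. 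Your instinct that this is the crux was correct; the sentence you wrote simply does not discharge it, and to complete a direct proof one must either add this closure as a hypothesis or extract it from the finer setup in the cited reference.
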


\begin{proof}
This is \cite{spectral_algebraic_geometry}[A.3.2.1].
\end{proof}
The category of schemes, as well as its higher categorical variants, has universal coproducts and by taking $S$ to be an appropriate class of morphisms (such as surjective \'{e}tale) we can produce the classical Grothendieck topologies in schemes (such as the \'{e}tale topology). 

\begin{Definition}
\label{Definition:downward_closed_and_generating_subcategory}
We will say a full subcategory $\Dcat \subseteq \Ccat$ is \emph{downward closed} if it has the property that if $C_{0} \rightarrow C$ is a covering such that $C_{i} \in \Dcat$ for all $i \geq 0$, where $C_{i} := C_{0} \times _{C} \times \ldots \times _{C} C_{0}$, then $C \in \Dcat$ as well. 

We say a full subcategory $\Dcat \subseteq \Ccat$ is \emph{generating} if it is closed under coproducts, pullbacks along coverings, and the smallest downward closed subcategory which contains it is all of $\Ccat$. 
\end{Definition}
Note that since a generating subcategory is closed under coproducts and pullbacks along coverings, it inherits a topology. The definition is chosen so that the following is true. 

\begin{Proposition}
\label{Proposition:restriction_of_sheaves_to_generating_subcategory_is_an_equivalence}
Let $\Dcat \subseteq \Ccat$ be a generating subcategory. Then, the restriction $Sh(\Ccat) \rightarrow Sh(\Dcat)$ between sheaf $\infty$-categories is an equivalence.
\end{Proposition}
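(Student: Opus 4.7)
The plan is to exhibit the restriction $r := \iota^{*}: \sheaves(\Ccat) \to \sheaves(\Dcat)$ as an equivalence by first proving it is conservative and then constructing an explicit quasi-inverse through transfinite extension along the downward closure. For conservativity, suppose $f: \eF \to \mathcal{G}$ is a morphism of sheaves on $\Ccat$ whose restriction to $\Dcat$ is an equivalence, and let $\Ccat_{f} \subseteq \Ccat$ denote the full subcategory of those $C$ for which $f(C)$ is an equivalence. By hypothesis $\Dcat \subseteq \Ccat_{f}$, and $\Ccat_{f}$ is downward closed: whenever $C_{0} \to C$ is a covering whose full Čech nerve $C_{\bullet}$ lies in $\Ccat_{f}$, descent identifies $f(C)$ with the limit $\lim_{[n] \in \Delta} f(C_{n})$ of equivalences, which is itself an equivalence. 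The generating hypothesis then forces $\Ccat_{f} = \Ccat$.

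For the inverse, given $\mathcal{G} \in \sheaves(\Dcat)$ I construct $\widetilde{\mathcal{G}} \in \sheaves(\Ccat)$ by transfinite induction along a stratification $\Dcat = \Dcat_{0} \subseteq \Dcat_{1} \subseteq \ldots$ of the downward closure, where $\Dcat_{\alpha + 1}$ adjoins each $C \in \Ccat$ admitting a covering $C_{0} \to C$ whose Čech nerve $C_{\bullet}$ lies entirely in $\Dcat_{\alpha}$; for such $C$ one sets $\widetilde{\mathcal{G}}(C) := \lim_{[n] \in \Delta} \widetilde{\mathcal{G}}(C_{n})$. The main obstacle will be verifying that this prescription is independent of the chosen covering and assembles functorially. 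I expect to handle this by a standard refinement argument: given two coverings of $C$ with Čech nerves in $\Dcat_{\alpha}$, one passes to their common fibre product over $C$ and applies descent for $\widetilde{\mathcal{G}}$ as already defined on the previous stratum, identifying both candidate values with the common limit over the refinement. This is the $\infty$-categorical analogue of the classical comparison lemma for sites.

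Finally, it remains to check that $\widetilde{(-)}$ is inverse to $r$. The relation $r\widetilde{\mathcal{G}} \simeq \mathcal{G}$ holds on the nose, since $\Dcat_{0} = \Dcat$ and the extension does not alter values there. Conversely, any $\eF \in \sheaves(\Ccat)$ admits a canonical comparison map $\eF \to \widetilde{r\eF}$, built inductively along the strata: on $\Dcat_{0}$ it is the identity on $r\eF$, and at successor stages the sheaf condition on $\eF$ provides the required map into the limit defining $\widetilde{r\eF}$. Since this comparison is an equivalence after restriction to $\Dcat$, the conservativity established above forces it to be an equivalence on all of $\Ccat$, completing the argument.
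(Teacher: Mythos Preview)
Your conservativity argument is correct and is in fact the heart of the paper's proof. The divergence, and the difficulty, is in your construction of the inverse via transfinite induction over strata $\Dcat_{\alpha}$.

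In the $\infty$-categorical setting, specifying values on objects stage by stage and then invoking a ``standard refinement argument'' for well-definedness does not obviously produce a coherent functor: the higher homotopies required to make $\widetilde{\mathcal{G}}$ into an honest object of $\Fun(\Ccat^{\op}, \spaces)$ are not supplied by your prescription. The phrase ``assembles functorially'' is hiding a genuine obstacle, and the refinement argument you gesture at for independence of the covering would itself need to be organized into coherent data. This is not impossible, but it is substantially more work than your sketch indicates, and it is exactly the kind of bookkeeping the adjoint functor machinery is designed to absorb.

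The paper sidesteps this entirely. Rather than building the inverse by hand, it takes $i_{*}$ to be the left adjoint to restriction $i^{*}$ (existing abstractly, as both sheaf $\infty$-categories are presentable and $i^{*}$ preserves limits). The relation $i^{*} i_{*} \simeq \id$ follows from $\Dcat$ carrying the induced topology and being closed under coproducts and pullbacks along coverings. For $i_{*} i^{*} \simeq \id$, one observes that for any sheaf $F$ on $\Ccat$ the full subcategory of those $C$ for which the counit $i_{*} i^{*} F(C) \to F(C)$ is an equivalence is downward closed and contains $\Dcat$---precisely your conservativity argument, applied to the counit. So your downward-closed idea is exactly right and is already sufficient once the adjoint is in hand; the transfinite construction is both unnecessary and, as written, incomplete.
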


\begin{proof}
The inclusion $: \Dcat \hookrightarrow \Ccat$ induces an adjunction $i_{*} \dashv i^{*}: Sh(\Ccat) \leftrightarrows Sh(\Dcat)$, where $i^{*}$ is the restriction and $i_{*}$ is the left Kan extension. Clearly, we have $i^{*} \circ i_{*} \simeq id$.

To see that the other composite is also the identity, observe that the sheaf condition implies that for any $F \in Sh(\Ccat)$ the subcategory of those $C \in \Ccat$ such that the counit $i_{*} i^{*} F(C) \rightarrow F(C)$ is an equivalence, is downward closed. Since it contains all of $\Dcat$ which is generating by assumption, it must be all of $\Ccat$. 
\end{proof}

The functors considered in this paper will not only be cosheaves, but will additionally have compatibility with a wider class of morphisms than just coverings. This further class will also form a marking; in practice this could be either the class of all smooth or even all flat morphisms, not necessarily surjective. 

Throughout the rest of the section, which forms the technical heart of this paper, we will work in the following setup.

\begin{Notation}
By $\Ccat$ we denote an $\infty$-category with universal coproducts, and we assume that we are given a pair of markings $S \subseteq T$. The word \emph{covering} will mean a morphism in $S$, and we will treat $\Ccat$ as a Grothendieck site with respect to the topology determined by $S$.
\end{Notation}

\begin{Notation}
The symbol $\euX$ will denote an $\infty$-topos. Following standard terminology, we will say that $F: \Ccat \rightarrow \euX$ is a \emph{cosheaf} if the induced functor between opposite $\infty$-categories
\[
F^{op}: \Ccat^{op} \rightarrow \euX^{op}
\]
is a sheaf with respect to the Grothendieck topology determined by $S$. Equivalently, $F$ is a cosheaf if for any $x \in \euX$, the composite
\[
\map(F(-), x): \Ccat^{op} \rightarrow \spaces
\]
is a sheaf of spaces.
\end{Notation}

The purpose of the additional marking $T$ is to single out the following class of cosheaves. 

\begin{Definition}
\label{Definition:adapted_cosheaf}
Let $T$ be a marking such that $S \subseteq T$. We will say that a cosheaf $F: \Ccat \rightarrow \euX$ is $T$-\emph{adapted} if $F$ commutes with pullbacks along morphisms in $T$. 
\end{Definition}

It turns out that the pullback preservation is so strong that it almost implies the cosheaf condition, as the following shows. 

\begin{Proposition}
\label{Proposition:adapted_cosheaves_are_functors_that_preserve_pullbacks_and_surjections}
A functor $F: \Ccat \rightarrow \euX$ which commutes with pullbacks along morphisms in $T$ is a cosheaf if and only if 

\begin{enumerate}
\item $F$ preserves coproducts 
\item $F$ takes morphisms in $S$ to effective epimorphisms. 
\end{enumerate}
\end{Proposition}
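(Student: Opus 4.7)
The plan is to translate the cosheaf condition---which asserts that $F$ sends \v{C}ech nerves of covering maps in $S$ to colimit diagrams---into the two pointwise conditions, using the $T$-pullback preservation hypothesis. The key observation is that for any $p \colon C_0 \to C$ in $S \subseteq T$, the iterated pullbacks defining the \v{C}ech nerve $C_\bullet$ of $p$ all lie along morphisms in $T$; by hypothesis, $F(C_\bullet)$ is therefore canonically identified with the \v{C}ech nerve of $F(p)$ in $\euX$. Since $\euX$ is an $\infty$-topos, a morphism there is an effective epimorphism precisely when it is the colimit of its own \v{C}ech nerve, and this links condition (2) directly to the cosheaf condition at $p$.

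For the forward direction, the above identification immediately yields (2): if $F$ is a cosheaf and $p \in S$, then the colimit of $F(C_\bullet)$ is $F(C)$, which says exactly that $F(p)$ is an effective epimorphism. To obtain (1), I would apply the cosheaf condition to the tautological covering family $\{C_i \hookrightarrow \bigsqcup_j C_j\}$, whose underlying total map is the identity of $\bigsqcup_j C_j$ and hence lies in $S$. The associated covering sieve splits as a disjoint union of slice categories $\Ccat_{/C_i}$, each with a terminal object, so the sieve colimit of $F$ computes as $\bigsqcup_i F(C_i)$; the cosheaf condition then identifies this with $F(\bigsqcup_j C_j)$.

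For the converse, let $\{C_i \to C\}$ be a covering, so that $p \colon \bigsqcup_i C_i \to C$ lies in $S$. By (1), $F(\bigsqcup_i C_i) \simeq \bigsqcup_i F(C_i)$. By the $T$-pullback hypothesis, together with universal coproducts in both $\Ccat$ and $\euX$, $F$ sends the \v{C}ech nerve of $p$ to the \v{C}ech nerve of $\bigsqcup_i F(C_i) \to F(C)$ in $\euX$. By (2), this latter morphism is an effective epimorphism, hence its \v{C}ech nerve has colimit $F(C)$, which is precisely the cosheaf condition for $p$.

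The main technical point is the compatibility of $F$ with the coproduct decomposition of the \v{C}ech nerve: one uses universal coproducts in $\Ccat$ to rewrite the $(n+1)$-fold self-pullback of $\bigsqcup_i C_i$ over $C$ as $\bigsqcup_{(i_0, \ldots, i_n)} C_{i_0} \times_C \cdots \times_C C_{i_n}$, then applies coproduct preservation by $F$, and finally reassembles the pieces in $\euX$ using universal coproducts there. This three-way interplay between (1), (2), and the $T$-pullback hypothesis is the heart of the argument.
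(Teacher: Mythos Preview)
Your argument is correct and follows essentially the same strategy as the paper: both reduce to the observation that, since $S \subseteq T$, the hypothesis forces $F$ to send the \v{C}ech nerve of any $p \in S$ to the \v{C}ech nerve of $F(p)$, and in an $\infty$-topos this is a colimit diagram precisely when $F(p)$ is an effective epimorphism. The paper's proof is terser, invoking \cite{spectral_algebraic_geometry}[A.3.3.1] to package coproduct preservation together with the \v{C}ech-nerve condition as the cosheaf property; your separate derivation of (1) via the tautological covering and the coproduct decomposition of the \v{C}ech nerve in the converse are correct but become unnecessary once that reference is in hand.
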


\begin{proof}
As a consequence of the description of sheaves of \cite{spectral_algebraic_geometry}[A.3.3.1], $F$ is a cosheaf if and only if it takes \v{C}ech nerves 

\begin{center}
$\ldots \rightrightarrows C \times _{D} C \rightarrow C \rightarrow D$
\end{center}
of morphisms $C \rightarrow D$ into colimit diagrams in $\euX$. However, since by assumption $S \subseteq T$ and $F$ preserves pullbacks along elements of the latter, the above diagram is taken to the \v{C}ech diagram of $F(C) \rightarrow F(D)$. Thus, it is a colimit diagram precisely when $F(C) \rightarrow F(D)$ is an effective epimorphism. 
\end{proof}
Our main goal for this section is to prove that the property of being adapted with respect to a given marking $T$ can be verified on a subcategory. As a consequence, we conclude that the unique extension of an adapted cosheaf is adapted. We begin with a simple lemma. 

\begin{Lemma}
\label{Lemma:subcategory_of_morphisms_for_which_f_commutes_with_pullbacks_along_a_given_morphism_is_downward_closed}
Let $C \in \Ccat$ and fix a morphism $D \rightarrow C$. Then, the subcategory of those morphisms $E \rightarrow C$ such that 

\begin{center}
$F(D \times _{C} E) \simeq F(D) \times _{F(C)} F(E)$
\end{center}
is a downward closed subcategory of $\Ccat_{/C}$. 
\end{Lemma}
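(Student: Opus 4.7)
The plan is to unwind the definition of downward closed and check that the property descends along the \v{C}ech nerve of a covering. Concretely, suppose $E \to C$ admits a covering $E_0 \to E$ such that every iterated fiber product $E_i := E_0 \times_E \cdots \times_E E_0$ (taken $i{+}1$ times) already lies in the subcategory in question; I must verify that $E$ itself satisfies $F(D \times_C E) \simeq F(D) \times_{F(C)} F(E)$.

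First I would pull the covering back along $D \to C$. Since $S$ is a marking, coverings are stable under pullback, so $D \times_C E_0 \to D \times_C E$ is again a covering. A direct diagram chase (universal property of fiber products) identifies its \v{C}ech nerve levelwise with $D \times_C E_\bullet$; that is, $(D \times_C E_0) \times_{D \times_C E} \cdots \times_{D \times_C E} (D \times_C E_0) \simeq D \times_C E_i$ for every $i$.

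Next I would use that $F$ is a cosheaf (the implicit standing assumption of the section). Applying $F$ to the \v{C}ech nerves of both $E_0 \to E$ and $D \times_C E_0 \to D \times_C E$ yields colimit diagrams in the ambient $\infty$-topos $\euX$, and substituting the assumed identity on each level gives
\begin{align*}
F(D \times_C E) \;\simeq\; \colim_i F(D \times_C E_i) \;\simeq\; \colim_i \bigl( F(D) \times_{F(C)} F(E_i) \bigr).
\end{align*}

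The only non-formal ingredient, and the step I expect to be the main obstacle, is the universality of colimits in the $\infty$-topos $\euX$ (\cite{higher_topos_theory}[6.1.3.9]): the pullback functor $F(D) \times_{F(C)} (-) : \euX_{/F(C)} \to \euX$ is a left adjoint and hence commutes with the colimit over the simplex category, producing
\begin{align*}
\colim_i \bigl( F(D) \times_{F(C)} F(E_i) \bigr) \;\simeq\; F(D) \times_{F(C)} \colim_i F(E_i) \;\simeq\; F(D) \times_{F(C)} F(E).
\end{align*}
Combining the two displays gives exactly the required identity for $E$, so the subcategory is downward closed.
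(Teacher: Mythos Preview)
Your argument is correct and follows the same route as the paper's proof: apply the cosheaf property to the \v{C}ech nerves of $E_0 \to E$ and of its pullback $D \times_C E_0 \to D \times_C E$, then invoke universality of colimits in the $\infty$-topos $\euX$ to pull $F(D) \times_{F(C)} (-)$ through the colimit. The paper's version is simply more terse, compressing into a single display the three steps you spell out.
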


\begin{proof}
Suppose that we have a covering $E_{0} \rightarrow E$ such that $F(E_{i} \times _{C} D) \simeq F(E_{i}) \times _{F(C)} F(E)$ for all $i \geq 0$, where $E_{i} := E_{0} \times _{E} \ldots \times_{E} E_{0}$. Then, 

\begin{center}
$F(E \times _{C} D) \simeq \varinjlim F(E_{i} \times _{C} D) \simeq \varinjlim F(E_{i}) \times _{F(C)} F(D) \simeq F(E) \times _{F(C)} F(D)$,
\end{center}
where we have twice used that $F$ is a cosheaf and once that $\euX$ is an $\infty$-topos, so that pullbacks therein commute with colimits. 

\end{proof}

\begin{Theorem}
\label{Theorem:extension_of_an_adapted_cosheaf_is_adapted}
Let $\Dcat \subseteq \Ccat$ be a generating subcategory closed under coproducts and pullbacks along coverings. Then, a cosheaf $F: \Ccat \rightarrow \euX$ is $T$-adapted if and only if its restriction $F _{| \Dcat}$ is. 
\end{Theorem}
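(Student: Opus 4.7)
The forward direction is immediate from the definitions. For the converse, assume $F_{|\Dcat}$ is $T$-adapted; I must show that for every $g\colon D \to C$ in $T$ and every $h\colon E \to C$ in $\Ccat$, the canonical comparison map
\[
\mu_{D,E}\colon F(D \times_C E) \longrightarrow F(D) \times_{F(C)} F(E)
\]
is an equivalence in $\euX$. The strategy is to view $\mu$ as a natural transformation of cosheaves on successively smaller $\infty$-categories, reducing to $\Dcat$ via Proposition \ref{Proposition:restriction_of_sheaves_to_generating_subcategory_is_an_equivalence}.

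First, I fix $g\colon D \to C$ in $T$ and observe that both $\Phi(E) := F(D \times_C E)$ and $\Psi(E) := F(D) \times_{F(C)} F(E)$ are cosheaves $\Ccat_{/C} \to \euX$. For $\Phi$ this uses that coverings pull back to coverings with compatible \v{C}ech nerves (as $S$ is a marking) together with $F$ being a cosheaf; for $\Psi$ it uses that pullbacks commute with colimits in the $\infty$-topos $\euX$. Thus $\mu$ is a morphism of cosheaves on $\Ccat_{/C}$, and after checking that $\Dcat_{/C}$ is generating in $\Ccat_{/C}$ (a routine consequence of $\Dcat$ being generating in $\Ccat$, via a transfinite induction on the downward closure), the cosheaf-dual of Proposition \ref{Proposition:restriction_of_sheaves_to_generating_subcategory_is_an_equivalence} reduces the problem to the case $E \in \Dcat$. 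Running the same argument symmetrically over $T_{/C}$ for fixed $E \in \Dcat$—using that $T_{/C}$ inherits a topology since $S \subseteq T$ and that $\Dcat \cap T_{/C}$ is generating in $T_{/C}$—reduces further to the case where both $D$ and $E$ lie in $\Dcat$. When $C \in \Dcat$ as well, the hypothesis that $F_{|\Dcat}$ is $T$-adapted provides $\mu_{D,E}$ directly.

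To remove the assumption $C \in \Dcat$, I proceed by transfinite induction on the downward closure of $\Dcat$ in $\Ccat$. Given $C$ with a covering $C_0 \to C$ whose \v{C}ech levels $C_n$ lie at lower levels, the pullbacks $D_n := D \times_C C_n$ and $E_n := E \times_C C_n$ satisfy $D_n \to C_n$ in $T$ (stability of markings under pullback), and the inductive hypothesis gives $F(D_n \times_{C_n} E_n) \simeq F(D_n) \times_{F(C_n)} F(E_n)$ for every $n$. Combining with the cosheaf property of $F$ along the pulled-back covers of $D$, $E$ and $D \times_C E$, the colimit identification
\[
F(D \times_C E) \simeq \varinjlim_{n \in \Delta^{\op}} F(D_n) \times_{F(C_n)} F(E_n) \simeq F(D) \times_{F(C)} F(E)
\]
closes the induction, where the second equivalence is descent for pullbacks along \v{C}ech groupoids in the $\infty$-topos $\euX$.

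The main obstacle is precisely this last descent identification: one must recognize that in the $\infty$-topos $\euX$, the levelwise pullback of the three \v{C}ech groupoids $F(D_\bullet)$, $F(C_\bullet)$, $F(E_\bullet)$ has colimit equal to the honest pullback $F(D) \times_{F(C)} F(E)$. This is a standard property of $\infty$-topoi but must be invoked carefully. The remaining bookkeeping amounts to repeated, essentially formal applications of Lemma \ref{Lemma:subcategory_of_morphisms_for_which_f_commutes_with_pullbacks_along_a_given_morphism_is_downward_closed} and Proposition \ref{Proposition:restriction_of_sheaves_to_generating_subcategory_is_an_equivalence}.
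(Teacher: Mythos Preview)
Your overall architecture matches the paper's: first show that every $C \in \Dcat$ is ``excellent'' (pullbacks over it are preserved for arbitrary $D,E$), then show excellence is downward closed. Your reduction to $D,E \in \Dcat$ via cosheaf restriction on slice categories is a repackaging of the paper's repeated use of Lemma~\ref{Lemma:subcategory_of_morphisms_for_which_f_commutes_with_pullbacks_along_a_given_morphism_is_downward_closed}, and works for the same reason.

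The genuine gap is in your inductive step. You correctly identify the second equivalence
\[
\varinjlim_{n}\, F(D_n)\times_{F(C_n)} F(E_n) \;\simeq\; F(D)\times_{F(C)} F(E)
\]
as the main obstacle, but calling it ``a standard property of $\infty$-topoi'' hides exactly the work the paper does. For this descent statement to apply you need two things: (i) that $F(C_\bullet)$ is a groupoid object, i.e.\ $F(C_{m+n}) \simeq F(C_m)\times_{F(C_0)} F(C_n)$, and (ii) that $F(D_\bullet)\to F(C_\bullet)$ and $F(E_\bullet)\to F(C_\bullet)$ are cartesian transformations, i.e.\ $F(D_n)\simeq F(D_0)\times_{F(C_0)} F(C_n)$. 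Neither is automatic from $F$ being a cosheaf; both are further instances of $T$-adaptedness, now over $C_0$ rather than $C$. They do follow from your inductive hypothesis applied to $C_0$, since $C_m\to C_0$ and $D_0\to C_0$ lie in $T$, but you never invoke the hypothesis in this way. The paper handles exactly this point by explicitly verifying the groupoid condition for $F(C_\bullet)$ and then running a $3\times 3$ pullback-pasting argument after reducing to spans factoring through $C_0$; your abstract descent formulation is a legitimate alternative, but you must supply the missing verifications (i) and (ii) for the argument to close.
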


\begin{proof}
One direction is trivial, so instead suppose that $F$ is a cosheaf such that $F_{| \Dcat}$ is adapted, we have to show that $F$ is adapted. 

Let us say that a morphism $D \rightarrow C$ is \emph{good} if for any other morphism $E \rightarrow C$ we have $F(E \times _{C} D) \simeq F(E) \times _{F(C)} F(E)$; our goal is to show that any morphism in $T$ is good. If $D \rightarrow C$ is a morphism in $\Dcat$ which belongs to $T$, then since $F_{| \Dcat}$ is assumed to be adapted, we deduce that the condition holds whenever $E \in \Dcat$. Then, it follows from \cref{Lemma:subcategory_of_morphisms_for_which_f_commutes_with_pullbacks_along_a_given_morphism_is_downward_closed} that all such arrows are good.

Let us further say that an object $C$ is \emph{excellent} if any morphism $D \rightarrow C$ which belongs to $T$ is good, we claim that any $C \in \Dcat$ is excellent. By another application of \cref{Lemma:subcategory_of_morphisms_for_which_f_commutes_with_pullbacks_along_a_given_morphism_is_downward_closed} we see that the collection of good morphisms is downward closed in the $\infty$-category $\Ccat^{T}_{/C}$ of arrows $D \rightarrow C$ which belong to $T$, so that it is enough to verify it when we also have $D \in \Dcat$, which we already did.

Lastly, we claim that the collection of excellent objects of $\Ccat$ is also downward closed; since we already verified that it contains all objects of $\Dcat$, this will end the argument. Suppose that we have a covering $C_{0} \rightarrow C$ such that all of $C_{i} := C_{0} \times _{C} \ldots \times _{C} C_{0}$ are excellent, we want to show that the same is true for $C$. We first show that $F$ takes the \v{C}ech nerve of $C_{0} \rightarrow C$ to the \v{C}ech nerve of $F(C_{0}) \rightarrow F(C)$, in particular, that $F(C_{1}) \simeq F(C_{0}) \times _{F(C)} F(C_{0})$. Since $F$ is a cosheaf, the image 

\begin{center}
$\ldots F(C_{1}) \rightrightarrows F(C_{0}) \rightarrow F(C)$
\end{center}
is a colimit diagram. Because $\euX$ is an $\infty$-topos, by \cite{higher_topos_theory}[6.1.3.19, (iv)] to verify that the above diagram is a \v{C}ech nerve it is enough to check that the underlying simplicial object is a groupoid; in other words, that for any partition $[m] = S \cup T$ with $S \cap T = \{ s \}$, the induced diagram

\begin{center}
	\begin{tikzpicture}
		\node (TL) at (0, 1.3) {$ F(C_{m}) $};
		\node (TR) at (2, 1.3) {$ F(C_{| S |}) $};
		\node (BL) at (0, 0) {$ F(C_{| T |}) $};
		\node (BR) at (2, 0) {$ F(C_{0}) $};
		
		\draw [->] (TL) to (TR);
		\draw [->] (TL) -- (BL);
		\draw [->] (TR) -- (BR);
		\draw [->] (BL) to (BR);
	\end{tikzpicture}
\end{center}
is a pullback. This is clear, since $C_{0}$ is assumed to be excellent.

To check that $C$ itself is excellent, we have to verify that an arbitrary map $D \rightarrow C$ is good; by what was said above, it is enough to check that this is the case for $D_{i} \rightarrow C$, where $D_{i} := C_{i} \times _{C} D$. Thus, we can assume that the given map $D \rightarrow C$ factors through $C_{0}$. Then, by again invoking \cref{Lemma:subcategory_of_morphisms_for_which_f_commutes_with_pullbacks_along_a_given_morphism_is_downward_closed} we see that we only have to verify that $F(D \times _{C} E) \simeq F(D) \times _{C} F(E)$ where $E \rightarrow C$ also factors through $C_{0}$. 

To summarize, to prove that $C \in \Ccat$ is excellent, it is enough to show that $F$ preserves pullbacks of spans which can be factorized as 

\begin{center}
$D \rightarrow C_{0} \rightarrow C \leftarrow C_{0} \leftarrow E$,
\end{center}
where each map belongs to $T$. Consider the diagram 

\begin{center}
	\begin{tikzpicture}
		\node (TL) at (-2.5, 1.3) {$ F(D \times _{C} E) $};
		\node (TM) at (0, 1.3) {$ F(C_{1} \times _{C_{0}} E) $};
		\node (TR) at (2.5, 1.3) {$ F(E) $};
		\node (ML) at (-2.5, 0) {$ F(D \times _{C_{0}} C_{1}) $}; 
		\node (MM) at (0, 0) {$ F(C_{1}) $};
		\node (MR) at (2.5, 0) {$ F(C_{0}) $};
		\node (BL) at (-2.5, -1.3) {$ F(D) $}; 
		\node (BM) at (0, -1.3) {$ F(C_{0}) $};
		\node (BR) at (2.5, -1.3) {$ F(C) $};
		
		\draw [->] (TL) to (TM);
		\draw [->] (TM) to (TR);
		\draw [->] (ML) to (MM);
		\draw [->] (MM) to (MR);
		\draw [->] (BL) to (BM);
		\draw [->] (BM) to (BR);
		
		\draw [->] (TL) to (ML);
		\draw [->] (ML) to (BL);
		\draw [->] (TM) to (MM);
		\draw [->] (MM) to (BM);
		\draw [->] (TR) to (MR);
		\draw [->] (MR) to (BR);
	\end{tikzpicture},
\end{center}
where each of the squares except possibly the lower right one is a pullback because $C_{i}$ are assumed to be excellent. Because we also verified the same about the last square, the pullback pasting lemma ends the argument. 
\end{proof}
We will later find ourselves in a situation where we have a natural transformation between adapted cosheaves which is particularly nice when restricted to the subcategory. It will then be useful to know that this "niceness" necessarily holds in general, as we will now verify. 

\begin{Definition}
\label{Definition:tcartesian_transformation_between_cosheaves}
Let $F, G: \Ccat \rightarrow \euX$ be functors. Then, we say that a natural transformation $F \rightarrow G$ is \emph{$T$-cartesian} if for any arrow $D \rightarrow C$ in $T$ the induced diagram 

 \begin{center}
	\begin{tikzpicture}
		\node (TL) at (0, 1.3) {$ F(D) $};
		\node (TR) at (1.6, 1.3) {$ G(D) $};
		\node (BL) at (0 , 0) {$ F(C) $};
		\node (BR) at (1.6, 0){$ G(C) $};
		
		\draw[->] (TL) -- (TR);
		\draw[->] (BL) -- (BR);
		\draw[->] (TL) -- (BL);
		\draw[->] (TR) -- (BR);
	\end{tikzpicture}
\end{center}
is cartesian.
\end{Definition}

\begin{Lemma}
\label{Lemma:property_of_being_an_adapted_cosheaf_descends_along_cartesian_morphism}
Suppose that $F, G: \Ccat \rightarrow \euX$ are coproduct-preserving functors and let $F \rightarrow G$ be $T$-cartesian. Then, if $G$ is a $T$-adapted cosheaf, the same is true for $F$. 
\end{Lemma}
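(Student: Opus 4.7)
My plan is to reduce to the criterion supplied by \textbf{Proposition \ref{Proposition:adapted_cosheaves_are_functors_that_preserve_pullbacks_and_surjections}}. Since $F$ is already assumed to preserve coproducts, it suffices to verify that $F$ commutes with pullbacks along morphisms in $T$ and sends morphisms in $S$ to effective epimorphisms in $\euX$. Both checks will be diagram chases that leverage the $T$-cartesianness of $F \to G$ together with the fact that $T$ is a marking, hence stable under pullback.

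For pullback preservation, let $D \to C$ be a morphism in $T$ and let $E \to C$ be arbitrary. Because $T$ is a marking, the pullback $D \times_C E \to E$ exists and is again in $T$. Applying the $T$-cartesian hypothesis to both $D \to C$ and $D \times_C E \to E$ yields natural equivalences $F(D) \simeq F(C) \times_{G(C)} G(D)$ and $F(D \times_C E) \simeq F(E) \times_{G(E)} G(D \times_C E)$. Since $G$ is $T$-adapted, $G(D \times_C E) \simeq G(D) \times_{G(C)} G(E)$, and substituting this in and reassociating the iterated pullback gives
\[
F(D \times_C E) \simeq F(E) \times_{G(C)} G(D) \simeq F(E) \times_{F(C)} F(D),
\]
as required.

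For the effective epi condition, let $C \to D$ be a morphism in $S \subseteq T$. The $T$-cartesian hypothesis exhibits $F(C) \simeq F(D) \times_{G(D)} G(C)$, so the map $F(C) \to F(D)$ is a pullback of $G(C) \to G(D)$. Since $G$ is a cosheaf (in particular an adapted one), $G(C) \to G(D)$ is an effective epimorphism by \textbf{Proposition \ref{Proposition:adapted_cosheaves_are_functors_that_preserve_pullbacks_and_surjections}}, and effective epimorphisms in the $\infty$-topos $\euX$ are stable under pullback. Hence $F(C) \to F(D)$ is an effective epimorphism, completing the verification.

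There is essentially no serious obstacle here: every required compatibility is a direct consequence of the marking axiom that $T$ is stable under pullback, the $T$-cartesian hypothesis, and the general topos-theoretic fact that effective epimorphisms are pullback-stable. The only mild subtlety is remembering that $E \to C$ need not lie in $T$, which is why one applies $T$-cartesianness to the \emph{pullback} $D \times_C E \to E$ rather than directly to $E \to C$.
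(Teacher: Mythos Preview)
Your proof is correct and follows essentially the same route as the paper: both reduce to \textbf{Proposition \ref{Proposition:adapted_cosheaves_are_functors_that_preserve_pullbacks_and_surjections}}, verify pullback preservation by applying the $T$-cartesian hypothesis to the two $T$-arrows $D \to C$ and $D \times_C E \to E$ and reassociating, and deduce the effective epi condition by exhibiting $F(C) \to F(D)$ as a base-change of $G(C) \to G(D)$.
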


\begin{proof}
Suppose that we have a cospan $C_{0} \rightarrow C \leftarrow D$ where the left map belongs to $T$, we have to check that $F(D_{0}) \rightarrow F(\widetilde{C}) \times_{F(C)} F(D)$ is an equivalence, where $D_{0} := \widetilde{C} \times_{C} D$. Applying the cartesian property to both the source and target of this morphism, we see that this is equivalent to asking whether 

\begin{center}
$G(D_{0}) \times_{G(D)} F(D) \rightarrow G(C_{0}) \times _{G(C)} F(C) \times_{F(C)} F(D) \simeq G(C_{0}) \times _{G(C)} F(D) $
\end{center}
is an equivalence. Since $G$ is assumed to be adapted, we have $G(D_{0}) \simeq G(C_{0}) \times _{G(C)} G(D)$ and it follows that the source of the above map we can rewrite as 

\begin{center}
$G(D_{0}) \times_{G(D)} F(D) \simeq G(C_{0}) \times _{G(C)} G(D) \times_{G(D)} F(D)  \simeq  G(C_{0}) \times _{G(C)} F(D)$
\end{center}
which is what we wanted to show. 

By \cref{Proposition:adapted_cosheaves_are_functors_that_preserve_pullbacks_and_surjections}, to finish showing that $F$ is an adapted cosheaf, we just have to check that it takes coverings to effective epimorphisms. However, if $C_{0} \rightarrow C$ is a covering, then by the cartesian property we have $F(C_{0}) \simeq G(C_{0}) \times _{G(C)} F(C)$ and we deduce that $F(C_{0}) \rightarrow F(C)$ is a base-change of $G(C_{0}) \rightarrow G(C)$, which is an effective epimorphism since $G$ is a cosheaf. 
\end{proof}

\begin{Proposition} 
\label{Proposition:cartesian_natural_transformations_of_adapted_cosheaves_are_stable_under_extension}
Let $F, G: \Ccat \rightarrow \euX$ be $T$-adapted cosheaves, $F \rightarrow G$ be a natural transformation and let $\Dcat \subseteq \Ccat$ be a generating subcategory. Then, if the restriction $F _{| \Dcat} \rightarrow G_{| \Dcat}$ is $T$-cartesian, then so is $F \rightarrow G$. 
\end{Proposition}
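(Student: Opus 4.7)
The plan is to mirror the architecture of the proof of \textbf{Theorem \ref{Theorem:extension_of_an_adapted_cosheaf_is_adapted}}. Call a morphism $D \to C$ in $T$ \emph{cart-good} if the induced square with vertices $F(D), G(D), F(C), G(C)$ is cartesian, and call an object $C \in \Ccat$ \emph{cart-excellent} if every $T$-morphism $D \to C$ is cart-good. The hypothesis on the restriction to $\Dcat$ says every $T$-morphism between objects of $\Dcat$ is cart-good, and what we must show is that every object of $\Ccat$ is cart-excellent.

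First I would fix $C \in \Dcat$ and verify that the collection of cart-good morphisms is downward closed in $\Ccat^{T}_{/C}$. Concretely, if $D_0 \to D$ is a covering such that each $D_i \to C$ (with $D_\bullet$ the \v{C}ech nerve of $D_0 \to D$) is cart-good, then the cosheaf property applied to both $F$ and $G$ together with universality of colimits in the $\infty$-topos $\euX$ gives
\begin{equation*}
F(D) \simeq \colim F(D_\bullet) \simeq \colim \bigl[F(C) \times_{G(C)} G(D_\bullet)\bigr] \simeq F(C) \times_{G(C)} \colim G(D_\bullet) \simeq F(C) \times_{G(C)} G(D),
\end{equation*}
so $D \to C$ is cart-good. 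Since $\Dcat$ generates $\Ccat$, the downward closure reduces the verification to $D \in \Dcat$, which is the hypothesis; thus every $C \in \Dcat$ is cart-excellent.

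Next I would show that cart-excellence itself is downward closed in $\Ccat$. Suppose $C_0 \to C$ is a covering with each $C_i$ in its \v{C}ech nerve cart-excellent, and take any $D \to C$ in $T$. Set $D_i := D \times_C C_i$; each is in $T$, and each $D_i \to C_i$ is cart-good by excellence of $C_i$. Because $F$ and $G$ are both $T$-adapted, one has $F(D_i) \simeq F(D) \times_{F(C)} F(C_i)$ and $G(D_i) \simeq G(D) \times_{G(C)} G(C_i)$. Writing $E := F(C) \times_{G(C)} G(D)$, direct pullback manipulation gives
\begin{equation*}
F(C_i) \times_{G(C_i)} G(D_i) \simeq F(C_i) \times_{G(C_i)} G(C_i) \times_{G(C)} G(D) \simeq F(C_i) \times_{G(C)} G(D) \simeq E \times_{F(C)} F(C_i),
\end{equation*}
so cart-goodness of each $D_i \to C_i$ rewrites as $F(D_i) \simeq E \times_{F(C)} F(C_i)$. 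Universality of colimits then yields $F(D) \simeq \colim F(D_\bullet) \simeq E \times_{F(C)} \colim F(C_\bullet) \simeq E$, which is cart-goodness of $D \to C$. Combined with the previous paragraph and the fact that $\Dcat$ generates $\Ccat$, this finishes the proof.

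The delicate point is the second descent step: it is essential that \emph{both} $F$ and $G$ are $T$-adapted so that the pullback identifications for $F(D_i)$ and $G(D_i)$ are simultaneously available, and that $\euX$ is an $\infty$-topos so that universal colimits allow the levelwise cartesian squares over the cover of $C$ to reassemble into a single cartesian square for $D \to C$.
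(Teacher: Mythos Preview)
Your proof is correct and follows the same two-step architecture as the paper: first show every $C \in \Dcat$ is cart-excellent by proving the cart-good morphisms are downward closed in $\Ccat^{T}_{/C}$, then show cart-excellence itself is downward closed in $\Ccat$. The only difference is in the execution of the second step. You pass to the full \v{C}ech nerve $C_\bullet$, identify $F(D_i) \simeq E \times_{F(C)} F(C_i)$ levelwise, and then invoke universality of colimits. The paper instead works only with $C_0$: it checks that the comparison map $F(D) \to G(D) \times_{G(C)} F(C)$ becomes an equivalence after base-change along $F(C_0) \to F(C)$, and concludes by noting that this map is an effective epimorphism (since $F$ is a cosheaf), hence detects equivalences in the $\infty$-topos $\euX$. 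Both arguments rely on the same $\infty$-topos axioms and on $T$-adaptedness of both $F$ and $G$; yours is a direct colimit computation, theirs a descent-along-effective-epi argument, and neither buys anything the other does not.
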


\begin{proof}
We first claim that for any $C \in \Ccat$, the subcategory of those morphisms $D \rightarrow C$ such that $F(D) \simeq G(D) \times_{G(C)} F(C)$ is a downward closed subcategory of $\Ccat_{/C}^{T}$. To see this, let $D_{0} \rightarrow C$ be a covering such that all $D_{i} \rightarrow C$ have this property, where $D_{i} := D_{0} \times _{D} \ldots \times_{D} D_{0}$. Then,  

\begin{center}
$F(D) \simeq \varinjlim F(D_{i}) \simeq \varinjlim G(D_{i}) \times _{G(C)} F(C) \simeq G(D) \times _{G(C)} F(C)$,
\end{center}
where we have used that $F, G$ are cosheaves and that colimits in $\euX$ commute will pullbacks. We deduce that any morphism in $D \rightarrow C$ in $T$ with $C \in \Dcat$ has the required property. 

We next claim that the subcategory of those $C$ such that for any morphism $D \rightarrow C$ in $T$ we have $F(D) \simeq G(D) \times_{G(C)} F(C)$ is a generating subcategory of $\Ccat$, together with what we've shown above this will finish the argument. Choose a covering $C_{0} \rightarrow C$ such that $C_{0}$ has the needed property and let $D_{0} := C_{0} \times_{C} D$. Then, since $F$ and $G$ are adapted, we have 

\begin{center}
$F(D) \times _{F(C)} F(C_{0}) \simeq F(D_{0}) \simeq G(D_{0}) \times_{G(C_{0})} F(C_{0}) \simeq G(D) \times _{G(C)} G(C_{0}) \times_{G(C_{0})} F(C_{0}) $
\end{center}
and further

\begin{center}
$G(D) \times _{G(C)} G(C_{0}) \times_{G(C_{0})} F(C_{0}) \simeq G(D) \times _{G(C)} F(C_{0}) \simeq G(D) \times _{G(C)} F(C) \times _{F(C)} F(C_{0})$.
\end{center}
We deduce that we have $F(D) \simeq G(D) \times _{G(C)} F(C)$ after base-changing along $F(C_{0}) \rightarrow F(C)$ and since the latter is an effective epimorphism since $F$ is a cosheaf, we deduce that this holds even before the base-change, ending the proof.
\end{proof}

 \section{Axiomatization of the normal sheaf}
In this section, we study the notion of a normal sheaf of a morphism of Artin stacks, defined in terms of the cotangent complex. Our main result is that, as a functor on relative Artin stacks, the normal sheaf is determined by its values on closed embeddings of schemes and a short list natural axioms.

Recall that if $U \hookrightarrow V$ is a closed embedding of schemes with ideal sheaf $I$, then the cotangent complex $L_{U / V}$ is $1$-connective and $h_{1} (L_{U/ V}) \simeq I / I^{2}$. The abelian cone  associated to the latter quasi-coherent sheaf defines a scheme over $U$ known as the \emph{normal sheaf} of the embedding, denoted by $N_{U}V := \mathbb{V}_{U}(I / I^{2})$. In the particular case when the embedding is regular, $I / I^{2}$ is locally free and the normal sheaf is just the classical normal bundle. 

\begin{Definition}[\cite{behrend_fantechi_intrinsic_normal_cone}]
\label{Definition:normal_sheaf_of_a_morphism_of_artin_stacks}
Let $\euX \rightarrow \euY$ be a morphism of Artin stacks. Then, its \emph{normal sheaf} is defined as

\begin{center}
$\Ncat_{\euX} \euY := \mathbb{V}_{\euX}(L_{\euX / \euY}[-1])$,
\end{center}
the abelian cone associated to the shift of the cotangent complex.
\end{Definition}

\begin{Warning}
\label{warning:normal_sheaf_terminology}
The terminology \emph{normal sheaf} is very much established, going back to the original paper of Behrend and Fantechi on the intrinsic normal cone \cite{behrend_fantechi_intrinsic_normal_cone}. It is, however, potentially confusing - we do warn the reader that the normal sheaf is not a sheaf, but rather a stack over $\euX$. 
\end{Warning}

Let us give a couple of examples.
\begin{Example}
\label{Example:normal_sheaf_functor_on_closed_embedding_of_affines_is_same_as_classical}
If $U \hookrightarrow V$ is a closed embedding of schemes, then $\Ncat_{U}V$ coincides with the normal sheaf in the classical sense. To see this, note that we verified in \cref{Lemma:cone_of_qcoh_sheaf_only_depends_on_the_coconnective_part} that the abelian cone only depends on the coconnective part of a quasi-coherent sheaf, so that we have $\Ncat_{U} V \simeq \mathbb{V}_{U}((L_{U/ V}[-1])_{\geq 0}) \simeq \mathbb{V}_{U}(I / I^{2})$, where $I$ is the ideal sheaf. 
\end{Example}

\begin{Example}
If $\euX$ is Deligne-Mumford, then the normal sheaf of the unique map $\euX \rightarrow \spec(k)$ coincides with the \emph{intrinsic normal sheaf} of Behrend and Fantechi \cite{behrend_fantechi_intrinsic_normal_cone}[3.6]. As a particularly easy example of the latter, let us suppose that $\euX \simeq X$ is a smooth scheme. Then, $L_{X} \simeq \Omega_{X}$ and so $\Ncat_{X} \simeq \mathbb{V}_{X}(\Omega_{X}[-1]) \simeq \deloop T_{X}$; that is, there's an equivalence between the intrinsic normal sheaf of $X$ and the classifying stack of its tangent bundle.
\end{Example}
Note that a priori the normal sheaf of a morphism $\euX \rightarrow \euY$ of Artin stacks is a priori only a stack, and in fact it can fail to be algebraic unless we impose some finiteness conditions.

\begin{Proposition}
\label{Proposition:normal_sheaf_of_morphisms_of_artin_stacks_is_again_artin}
Let $\euX \rightarrow \euY$ be a morphism of Artin stacks which is locally of finite type. Then, $\Ncat_{\euX} \euY$ is Artin and moreover, 

\begin{enumerate}
\item if $\euX \rightarrow \euY$ is relative $n$-Artin, then $\Ncat_{\euX} \euY \rightarrow \euX$ is relative $(n+1)$-Artin,
\item if $\euX \rightarrow \euY$ is smooth, then so is $\Ncat_{\euX} \euY \rightarrow \euX$.
\end{enumerate}
\end{Proposition}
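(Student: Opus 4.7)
The strategy is to reduce the statement to \textbf{Theorem \ref{Theorem:cone_over_tor_bounded_qcoh_sheaf_is_artin_and_smooth_for_perfects}} by verifying that the shifted cotangent complex $L_{\euX/\euY}[-1]$ satisfies the connectivity and perfectness hypotheses required for that theorem to produce a relative Artin (resp. smooth) abelian cone. The cone functor will then give exactly the statement about $\Ncat_{\euX}\euY = C_{\euX}(L_{\euX/\euY}[-1])$.

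For item (1), suppose $\euX \to \euY$ is relative $n$-Artin. By \textbf{Proposition \ref{Proposition:relative_n_stacks_have_minus_n_connective_cotangent_complex}}, $L_{\euX/\euY}$ is $(-n)$-connective and perfect to order $-1$. Combining this with the locally of finite type hypothesis, \textbf{Proposition \ref{Proposition:cotangent_complex_perfect_to_order_0_for_locally_of_finite_type}} upgrades this to perfect to order $0$. Shifting $[-1]$ moves homological degrees down by one, so $L_{\euX/\euY}[-1]$ is $(-(n+1))$-connective and the finitely generated projectives in degrees $d \leq 0$ of a representative of $L_{\euX/\euY}$ now sit in degrees $d \leq -1$; that is, $L_{\euX/\euY}[-1]$ is perfect to order $-1$. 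These are precisely the hypotheses of \textbf{Theorem \ref{Theorem:cone_over_tor_bounded_qcoh_sheaf_is_artin_and_smooth_for_perfects}} with $n$ replaced by $n+1$, which gives that $\Ncat_{\euX}\euY \to \euX$ is relative $(n+1)$-Artin.

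For item (2), if $\euX \to \euY$ is smooth then by the second clause of \textbf{Proposition \ref{Proposition:relative_n_stacks_have_minus_n_connective_cotangent_complex}} the cotangent complex $L_{\euX/\euY}$ is perfect of non-positive amplitude. Since shifting by $[-1]$ sends amplitude $[a,0]$ to $[a-1,-1]$, the sheaf $L_{\euX/\euY}[-1]$ is still perfect of non-positive amplitude. The smoothness clause of \textbf{Theorem \ref{Theorem:cone_over_tor_bounded_qcoh_sheaf_is_artin_and_smooth_for_perfects}} then directly yields that $\Ncat_{\euX}\euY \to \euX$ is smooth.

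Finally, to see that $\Ncat_{\euX}\euY$ is Artin (and not merely relatively so), we note that $\euX$ is $m$-Artin for some $m$ by assumption, hence the structure map $\euX \to \euY$ between $m$-Artin stacks is automatically a relative $m$-Artin morphism by \textbf{Proposition \ref{Proposition:properties_of_artin_morphisms}}. Applying (1) we obtain that $\Ncat_{\euX}\euY \to \euX$ is relative $(m+1)$-Artin, and composing with $\euX \to \spec(k)$ using the stability of (smooth) relative Artin morphisms under composition from the same proposition shows $\Ncat_{\euX}\euY$ is Artin. There is no substantive obstacle in the argument; the only point that requires a small calculation is tracking the effect of the $[-1]$ shift on the connectivity and perfect-to-order bounds, which is routine.
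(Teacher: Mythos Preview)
Your proof is correct and follows exactly the same route as the paper, which simply cites \textbf{Theorem \ref{Theorem:cone_over_tor_bounded_qcoh_sheaf_is_artin_and_smooth_for_perfects}}, \textbf{Proposition \ref{Proposition:cotangent_complex_perfect_to_order_0_for_locally_of_finite_type}}, and \textbf{Proposition \ref{Proposition:relative_n_stacks_have_minus_n_connective_cotangent_complex}} without further comment. You have spelled out the bookkeeping of how the $[-1]$ shift affects connectivity and the perfect-to-order bound, which is precisely the content the paper leaves implicit.
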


\begin{proof} 
This follows from \cref{Theorem:cone_over_tor_bounded_qcoh_sheaf_is_artin_and_smooth_for_perfects}, \cref{Proposition:cotangent_complex_perfect_to_order_0_for_locally_of_finite_type} and \cref{Proposition:relative_n_stacks_have_minus_n_connective_cotangent_complex}.
\end{proof}

\begin{Remark}
One can consider \cref{Proposition:normal_sheaf_of_morphisms_of_artin_stacks_is_again_artin} as giving a quantitative reason why Behrend and Fantechi only define the normal sheaf for morphisms of Deligne-Mumford type - if $\euX \rightarrow \euY$ is $1$-Artin, then the correct definition makes $\Ncat_{\euX} \euY$ into a $2$-Artin stack, forcing the introduction of higher algebraic stacks. 
\end{Remark}

\begin{Remark}
If $\euX \rightarrow \euY$ is a morphism of Artin stacks which is not locally of finite type, then $\Ncat_{\euX} \euY$ can fail to be Artin. Nevertheless, it is always "algebraic" in the sense that it can be locally obtained by starting from a scheme and taking quotients by actions of flat group schemes; what can fail is that without finiteness these group schemes will in general not be smooth. 
\end{Remark}
Our goal will be to prove that the normal cone functor is uniquely determined by a simple set of axioms. As we want to stay in the geometric context, in light of \cref{Proposition:normal_sheaf_of_morphisms_of_artin_stacks_is_again_artin} we should introduce some finiteness conditions. To avoid repeating them over and over, let us make the following convention.

\begin{Convention}
A \emph{relative Artin stack} is a morphism $\euX \rightarrow \euY$ of Artin stacks which is locally of finite type. We denote the $\infty$-category of relative Artin stacks and commutative squares by $\relativeartinstacks := \Fun_{\textnormal{loc.f.t.}}(\Delta^{1}, \artinstacks)$.
\end{Convention}
 
As a minor warning, note that the above notion of a relative Artin stack is more strict than the most general notion of a relative Artin stack in two different ways - we require the target to also be Artin, rather than arbitrary, and we require the morphism to be locally of finite type. For most applications, the stacks considered are finite type over a field, so that these two assumptions are trivially satisfied.

\begin{Notation}
If $\euX^{\prime} \rightarrow \euY^{\prime}$ and $\euX \rightarrow \euY$ are relative Artin stacks, then we will use the notation $(\euX^{\prime} \rightarrow \euY^{\prime}) \rightarrow (\euX \rightarrow \euY)$ to denote morphisms in the $\infty$-category of relative Artin stacks, which are given by commutative squares 

 \begin{center}
	\begin{tikzpicture}
		\node (TL) at (0, 1) {$\euX^{\prime} $};
		\node (TR) at (1, 1) {$\euY^{\prime} $};
		\node (BL) at (0 , 0) {$\euX$};
		\node (BR) at (1, 0){$\euY$};
		
		\draw[->] (TL) -- (TR);
		\draw[->] (BL) -- (BR);
		\draw[->] (TL) -- (BL);
		\draw[->] (TR) -- (BR);
	\end{tikzpicture}.
\end{center}
This notation is introduced to lessen our need to draw complicated diagrams.  
\end{Notation}

 \begin{Definition}
 \label{Definition:smooth_and_surjective_maps_of_relative_artin_stacks}
 We say a morphism $(\euX^{\prime} \rightarrow \euY^{\prime}) \rightarrow (\euX \rightarrow \euY)$ of relative Artin stacks is \emph{smooth} if both $\euX^{\prime} \rightarrow \euX$ and $\euY^{\prime} \rightarrow \euY$ are smooth. Likewise, we say it is \emph{surjective} if both of those arrows are surjective. 
 \end{Definition}
Since one easily verifies that the $\infty$-category of Artin stacks has universal coproducts, the same is true for the $\infty$-category of relative Artin stacks, where limits and colimits are computed separately in the source and target. It follows that $\relativeartinstacks$ admits a unique Grothendieck topology in which covering families are given by smooth, jointly surjective maps in the sense of  \cref{Definition:smooth_and_surjective_maps_of_relative_artin_stacks}. We will use descent with respect to this topology to prove the following result. 

\begin{Theorem}
\label{Theorem:unique_characterization_of_a_normal_sheaf_of_morphisms_of_artin_stacks}
The normal sheaf functor $\Ncat: \relativeartinstacks \rightarrow \artinstacks$ is determined up to a canonical natural equivalence as the unique functor subject to the following four axioms:
\begin{enumerate}
\item If $U \hookrightarrow V$ is a closed embedding of schemes, then $\Ncat_{U}V$ coincides with the classical normal sheaf, that is, $\Ncat_{U}V \simeq \mathbb{V}_{U}(I / I^{2})$, where $I$ is the ideal sheaf.
\item $\Ncat$ preserves coproducts; that is, for any two relative Artin stacks $\euX \rightarrow \euY$ and $\euX^{\prime} \rightarrow \euY^{\prime}$ we have $\Ncat_{\euX \sqcup \euX^{\prime}} (\euY \sqcup \euY^{\prime}) \simeq \Ncat_{\euX} \euY \sqcup \Ncat_{\euX^{\prime}} \euY^{\prime}$.
\item $\Ncat$ preserves smooth and smoothly surjective maps; that is, if $(\euX^{\prime} \rightarrow \euY^{\prime}) \rightarrow (\euX \rightarrow \euY)$ is smooth (resp. smooth surjective) map of relative Artin stacks, then the same is true for $\Ncat_{\euX^{\prime}} \euY^{\prime} \rightarrow \Ncat_{\euX} \euY$.
\item $\Ncat$ commutes with pullbacks along smooth morphisms; that is, if $(\euX^{\prime} \rightarrow \euY^{\prime}) \rightarrow (\euX \rightarrow \euY)$ is smooth, then $\Ncat _{\euX^{\prime} \times _{\euX} \euW} (\euY^{\prime} \times _{\euY} \euZ) \simeq \Ncat_{\euX^{\prime}} \euY^{\prime} \times _{\Ncat_{\euX} \euY} \Ncat_{\euW} \euZ$ for any $(\euW \rightarrow \euZ) \rightarrow (\euX \rightarrow \euY)$.
\end{enumerate}
\end{Theorem}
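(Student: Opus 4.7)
The plan is to deploy the machinery of adapted cosheaves from Section \ref{Section:adapted_cosheaves}. Take $\Ccat = \relativeartinstacks$ equipped with the marking $S$ of smoothly surjective morphisms (which determines the topology) and the larger marking $T$ of all smooth morphisms. The strategy is to show that any functor satisfying the four axioms is a $T$-adapted cosheaf which, when restricted to the full subcategory $\Dcat \subseteq \relativeartinstacks$ of closed embeddings of schemes, is canonically identified with the classical normal sheaf; uniqueness then follows from \textbf{Theorem \ref{Theorem:extension_of_an_adapted_cosheaf_is_adapted}} together with \textbf{Proposition \ref{Proposition:cartesian_natural_transformations_of_adapted_cosheaves_are_stable_under_extension}}.

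First I would verify that the functor $\Ncat$ of \textbf{Definition \ref{Definition:normal_sheaf_of_a_morphism_of_artin_stacks}} itself satisfies the four axioms. Axiom (1) is \textbf{Example \ref{Example:normal_sheaf_functor_on_closed_embedding_of_affines_is_same_as_classical}}. Axiom (2) follows from the fact that the cotangent complex takes coproducts in $\relativeartinstacks$ to coproducts of quasi-coherent sheaves on the respective sources, together with \textbf{Lemma \ref{Lemma:cone_functor_takes_colimits_to_limits}}. Axiom (3) is \textbf{Proposition \ref{Proposition:normal_sheaf_of_morphisms_of_artin_stacks_is_again_artin}}, supplemented by a short argument that $\Ncat$ sends smooth surjections to smooth surjections using \textbf{Lemma \ref{Lemma:abelian_cone_takes_certain_maps_of_qcoh_sheaves_to_atlases}}. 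Axiom (4) combines \textbf{Remark \ref{Remark:cotangent_complex_of_a_stack_satisfies_smooth_basechange}} on smooth base change of the cotangent complex with \textbf{Lemma \ref{Lemma:base_change_for_cone_functor}} on base change for the abelian cone. Combining axioms (2), (3), and (4) with \textbf{Proposition \ref{Proposition:adapted_cosheaves_are_functors_that_preserve_pullbacks_and_surjections}} then shows that any functor satisfying the axioms is a $T$-adapted cosheaf on $\relativeartinstacks$.

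Next I would check that $\Dcat \subseteq \relativeartinstacks$ is a generating subcategory in the sense of \textbf{Definition \ref{Definition:downward_closed_and_generating_subcategory}}. Closure under coproducts is immediate; closure under pullbacks along coverings reduces to the fact that smooth surjections pull back to smooth surjections and that the pullback of a scheme along a smooth morphism is a scheme, with closed embeddings stable under base change. For generation, one appeals to the observation highlighted in the introduction that every locally of finite type morphism $\euX \rightarrow \euY$ of higher Artin stacks admits a smooth surjection $(U \hookrightarrow V) \to (\euX \rightarrow \euY)$ with $U \hookrightarrow V$ a closed embedding of schemes. Applying this fact iteratively, the Čech nerve of such a covering consists, one level at a time, of relative Artin stacks of strictly lower Artin level, so induction on this level combined with the downward-closure condition places every $(\euX \rightarrow \euY)$ in the downward closure of $\Dcat$.

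Finally, combining \textbf{Theorem \ref{Theorem:extension_of_an_adapted_cosheaf_is_adapted}} with \textbf{Proposition \ref{Proposition:cartesian_natural_transformations_of_adapted_cosheaves_are_stable_under_extension}} applied to the canonical comparison map between any two such candidate functors, one concludes that any $T$-adapted cosheaf on $\relativeartinstacks$ whose restriction to $\Dcat$ is identified with $\eupsilon \mapsto C_U(I/I^{2})$ is canonically equivalent to $\Ncat$. The main obstacle I anticipate is the generation argument for $\Dcat$: establishing the atlas statement in sufficient generality, and iterating it through the Čech nerve so that the downward-closure induction actually terminates. All the other steps are direct consequences of already-established lemmas about the cotangent complex, the abelian cone, and the formalism of adapted cosheaves.
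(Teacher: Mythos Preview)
Your overall strategy---show that any functor satisfying the axioms is a $T$-adapted cosheaf, prove that closed embeddings of schemes generate $\relativeartinstacks$, and then invoke the machinery of \S\ref{Section:adapted_cosheaves}---matches the paper exactly. The uniqueness step is in fact simpler than you make it: once you know any such functor is a cosheaf and that $\pairs$ generates, \textbf{Proposition \ref{Proposition:restriction_of_sheaves_to_generating_subcategory_is_an_equivalence}} alone gives uniqueness; there is no need for \textbf{Proposition \ref{Proposition:cartesian_natural_transformations_of_adapted_cosheaves_are_stable_under_extension}}, and indeed there is no ``canonical comparison map'' between two arbitrary candidate functors until after you have invoked that equivalence.

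The genuine gap is in your verification of axioms (3) and (4). For (4), smooth base-change of the cotangent complex together with \textbf{Lemma \ref{Lemma:base_change_for_cone_functor}} only yields the special case of \textbf{Lemma \ref{Lemma:smooth_base_change_for_normal_sheaf}}: namely $\Ncat_{\euX'}\euY' \simeq \euX' \times_\euX \Ncat_\euX \euY$ for a \emph{cartesian} square with smooth $\euY' \to \euY$. Axiom (4) concerns an arbitrary smooth morphism $(\euX' \to \euY') \to (\euX \to \euY)$ in the arrow category, which is not cartesian in general, and an arbitrary second leg $(\euW \to \euZ) \to (\euX \to \euY)$. The paper handles this in \textbf{Proposition \ref{Proposition:normal_sheaf_commutes_with_pullbacks_along_smooth_maps_relative_artin_stacks}} by decomposing the smooth map into one which is the identity on the source and one which is the identity on the target, and then using the cofibre sequences of abelian stacks from \textbf{Lemma \ref{Lemma:cofibre_sequences_of_abelian_stacks_involving_the_normal_sheaf}} to compare both sides. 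Similarly, for (3), \textbf{Proposition \ref{Proposition:normal_sheaf_of_morphisms_of_artin_stacks_is_again_artin}} only tells you that $\Ncat_\euX \euY \to \euX$ is smooth when $\euX \to \euY$ is; it says nothing about the induced map $\Ncat_{\euX'}\euY' \to \Ncat_\euX \euY$ for a smooth map of relative stacks. That is the content of \textbf{Lemma \ref{Lemma:normal_sheaf_functor_preserves_smooth_and_smoothly_surjective_maps_of_relative_artin_stacks}}, which again relies on the cofibre sequences. These cofibre-sequence arguments are the substantive input you are missing.
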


Before proving uniqueness, we will first establish that the normal sheaf does have the required properties. 

\begin{Lemma}
\label{Lemma:smooth_base_change_for_normal_sheaf}
Let $\euX \rightarrow \euY$ be a relative Artin stack and suppose that $\euY^{\prime} \rightarrow \euY$ is smooth. Then, $\Ncat_{\euX^{\prime}} \euY^{\prime} \simeq \p^{*} \Ncat_{\euX} \euY$, where $\euX^{\prime} \simeq \euX \times_{\euY} \euY^{\prime}$ and $p: \euX^{\prime} \rightarrow \euX$ is the projection. 
\end{Lemma}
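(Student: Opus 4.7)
The proof is a direct combination of two base-change results already established in the paper, so the plan is short.

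First, I would unpack notation: by definition $\Ncat_{\euX}\euY = C_{\euX}(L_{\euX/\euY}[-1])$, and $p^{*}\Ncat_{\euX}\euY$ means the pullback as a stack over $\euX$, i.e.\ the fiber product $\euX' \times_{\euX} C_{\euX}(L_{\euX/\euY}[-1])$. Since $\euY' \to \euY$ is smooth and $\euX' \simeq \euX \times_{\euY} \euY'$, the projection $p: \euX' \to \euX$ is also smooth as a pullback of a smooth morphism.

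Next, I would invoke \textbf{Remark \ref{Remark:cotangent_complex_of_a_stack_satisfies_smooth_basechange}}, which gives smooth (even flat) base change for the cotangent complex: there is a canonical equivalence
\begin{equation*}
p^{*} L_{\euX / \euY} \simeq L_{\euX^{\prime} / \euY^{\prime}}
\end{equation*}
in $\qcoh(\euX')$. Shifting by $[-1]$ preserves this equivalence, so $p^{*}(L_{\euX/\euY}[-1]) \simeq L_{\euX'/\euY'}[-1]$.

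Finally, I would apply \textbf{Lemma \ref{Lemma:base_change_for_cone_functor}} to the morphism $p: \euX' \to \euX$ and the sheaf $\eupsilon = L_{\euX/\euY}[-1]$, obtaining
\begin{equation*}
\euX' \times_{\euX} C_{\euX}(L_{\euX/\euY}[-1]) \;\simeq\; C_{\euX'}(p^{*} L_{\euX/\euY}[-1]) \;\simeq\; C_{\euX'}(L_{\euX'/\euY'}[-1]) \;=\; \Ncat_{\euX'}\euY',
\end{equation*}
where the second equivalence uses the smooth base change for the cotangent complex established in the previous step. The left-hand side is precisely $p^{*}\Ncat_{\euX}\euY$, which concludes the argument.

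There is no real obstacle here: the entire content is assembled from two previously proven facts. The only subtlety worth mentioning explicitly is the interpretation of $p^{*}$ on the normal sheaf as fiber product of stacks (rather than as pullback of a quasi-coherent sheaf), which is why \textbf{Lemma \ref{Lemma:base_change_for_cone_functor}} applies cleanly.
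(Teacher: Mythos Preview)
Your proposal is correct and follows essentially the same approach as the paper: both arguments combine flat base-change for the cotangent complex with \textbf{Lemma \ref{Lemma:base_change_for_cone_functor}}. Your version simply unpacks the notation and the meaning of $p^{*}$ more explicitly than the paper's two-line proof.
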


\begin{proof}
By flat base-change for the cotangent complex, we have $L_{\euX^{\prime} / \euY^{\prime}} \simeq p^{*} L_{\euX / \euY}$, so that the statement follows immediately from \cref{Lemma:base_change_for_cone_functor}. 
\end{proof}

\begin{Lemma}
\label{Lemma:cofibre_sequences_of_abelian_stacks_involving_the_normal_sheaf}
Suppose we have a composite $\euX \rightarrow \euY \rightarrow \euZ$ of morphisms of Artin stacks. Then

\begin{enumerate}
\item if $\euX \rightarrow \euY$ is smooth, there's a cofibre sequence $\Ncat_{\euX} \euY \rightarrow \Ncat_{\euX} \euZ \rightarrow \euX \times _{\euY} \Ncat_{\euY} \euZ$
\item if $\euY \rightarrow \euZ$ is smooth, there's a cofibre sequence $\Omega_{\euX} (\euX \times _{\euY} \Ncat_{\euY} \euZ) \rightarrow \Ncat_{\euX} \euY \rightarrow \Ncat_{\euX} \euZ$
\end{enumerate} 
of abelian stacks over $\euX$, where $\Omega _{\euX}$ is the fibrewise loop space over $\euX$. Moreover, in both cases the left term is smooth over $\euX$ and so the right map is a smooth surjection. 
\end{Lemma}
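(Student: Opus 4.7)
The plan is to deduce both cofibre sequences from the standard fibre sequence of cotangent complexes
\[
f^{*} L_{\euY / \euZ} \to L_{\euX / \euZ} \to L_{\euX / \euY}
\]
for the composite $\euX \xrightarrow{f} \euY \to \euZ$, combined with \textbf{Remark \ref{Remark:map_of_qcoh_sheaves_with_coconnective_perfect_cofibre_induces_cofibre_seq_of_abelian_stacks}}: any cofibre sequence $\eupsilon \to \eupsilon^{\prime} \to \euP$ in $\qcoh(\euX)$ whose last term is perfect of non-positive amplitude is sent by the abelian cone functor to a cofibre sequence $C_{\euX}(\euP) \to C_{\euX}(\eupsilon^{\prime}) \to C_{\euX}(\eupsilon)$ of abelian stacks over $\euX$. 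The role of each smoothness hypothesis is precisely to arrange, after shifting and possibly rotating, a version of the cotangent sequence whose last term satisfies that condition, by \textbf{Proposition \ref{Proposition:relative_n_stacks_have_minus_n_connective_cotangent_complex}}.

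For part (1), I would simply shift the cotangent sequence by $[-1]$ to obtain
\[
f^{*} L_{\euY / \euZ}[-1] \to L_{\euX / \euZ}[-1] \to L_{\euX / \euY}[-1],
\]
whose last term is perfect of non-positive amplitude because $\euX \to \euY$ is smooth. The remark then produces the cofibre sequence of abelian stacks, and base change for the abelian cone (\textbf{Lemma \ref{Lemma:base_change_for_cone_functor}}) identifies $C_{\euX}(f^{*} L_{\euY / \euZ}[-1])$ with $\euX \times_{\euY} \Ncat_{\euY} \euZ$. For part (2), I would rotate before shifting, giving the cofibre sequence
\[
L_{\euX / \euZ}[-1] \to L_{\euX / \euY}[-1] \to f^{*} L_{\euY / \euZ},
\]
whose last term is now perfect of non-positive amplitude thanks to smoothness of $\euY \to \euZ$. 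After the remark this becomes $C_{\euX}(f^{*} L_{\euY / \euZ}) \to \Ncat_{\euX} \euY \to \Ncat_{\euX} \euZ$, and I would identify the left term with $\Omega_{\euX}(\euX \times_{\euY} \Ncat_{\euY} \euZ)$ by combining base change with the elementary identity $C_{\euX}(M[1]) \simeq \Omega_{\euX} C_{\euX}(M)$, which falls out of the definition via $\map_{\dcat(A)}(f^{*} M[1], A) \simeq \Omega \map_{\dcat(A)}(f^{*} M, A)$.

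It remains to verify the smoothness claims. In both cases the left term is $C_{\euX}(\euP)$ for some perfect $\euP$ of non-positive amplitude, hence smooth over $\euX$ by \textbf{Theorem \ref{Theorem:cone_over_tor_bounded_qcoh_sheaf_is_artin_and_smooth_for_perfects}}. Surjectivity of the right map is part of \textbf{Lemma \ref{Lemma:abelian_cone_takes_certain_maps_of_qcoh_sheaves_to_atlases}}, which moreover identifies its self-fibre product with the product of $\Ncat_{\euX} \euZ$ (respectively $\Ncat_{\euX} \euY$) with the left term over $\euX$. The one delicate point is smoothness of the right map itself, since smoothness is not local on the source; my approach would be to combine this self-fibre product description with descent along the (surjective) right map, noting that its base change along itself is a projection whose second factor is smooth over $\euX$. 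Equivalently, the right map is a principal bundle for a smooth abelian group stack in $\stacks_{/\euX}$, and so inherits smoothness from its structure group.
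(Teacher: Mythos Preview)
Your proposal is correct and follows essentially the same approach as the paper: apply \textbf{Remark \ref{Remark:map_of_qcoh_sheaves_with_coconnective_perfect_cofibre_induces_cofibre_seq_of_abelian_stacks}} to the shifted cotangent sequence $p^{*} L_{\euY / \euZ}[-1] \rightarrow L_{\euX / \euZ}[-1] \rightarrow L_{\euX / \euY}[-1]$ for part (1) and to its rotation $L_{\euX / \euZ}[-1] \rightarrow L_{\euX / \euY}[-1] \rightarrow p^{*} L_{\euY / \euZ}$ for part (2), then invoke \textbf{Theorem \ref{Theorem:cone_over_tor_bounded_qcoh_sheaf_is_artin_and_smooth_for_perfects}} for smoothness of the left term. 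Your treatment is in fact more explicit than the paper's on two points the paper leaves implicit: the identification $C_{\euX}(f^{*}L_{\euY/\euZ}) \simeq \Omega_{\euX}(\euX \times_{\euY} \Ncat_{\euY}\euZ)$, and the passage from ``left term smooth'' to ``right map smooth'', which the paper records only as ``and so'' while you justify it via the torsor description coming from \textbf{Lemma \ref{Lemma:abelian_cone_takes_certain_maps_of_qcoh_sheaves_to_atlases}}.
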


\begin{proof}
Using the standard exactness properties of the cotangent complex, the first cofibre sequence follows from \cref{Remark:map_of_qcoh_sheaves_with_coconnective_perfect_cofibre_induces_cofibre_seq_of_abelian_stacks} applied to $p^{*} L_{\euY / \euZ}[-1] \rightarrow L_{\euX / \euZ}[-1] \rightarrow L_{\euX / \euY}[-1]$, where $p: \euX \rightarrow \euY$, and the second from applying it to $L_{\euX / \euZ}[-1] \rightarrow L_{\euX / \euY} \rightarrow p^{*} L_{\euY / \euZ}$.

To see the second claim, observe that in a cofibre sequence, the right map is always surjective, and smoothness follow from \cref{Theorem:cone_over_tor_bounded_qcoh_sheaf_is_artin_and_smooth_for_perfects}.
\end{proof}

\begin{Lemma}
\label{Lemma:normal_sheaf_functor_preserves_smooth_and_smoothly_surjective_maps_of_relative_artin_stacks}
The normal sheaf functor $\Ncat: \relativeartinstacks \rightarrow \artinstacks$ preserves smooth and smoothly surjective maps. 
\end{Lemma}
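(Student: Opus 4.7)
The plan is to factor the given morphism of relative Artin stacks through an intermediate object and analyze the two halves separately. Suppose the given morphism has vertical maps $p : \euX' \to \euX$ and $q : \euY' \to \euY$, both smooth (and, in the smoothly surjective case, additionally surjective). I will form $\tilde{\euX} := \euX \times_\euY \euY'$ together with the canonical factorisation $\euX' \xrightarrow{\phi} \tilde{\euX} \xrightarrow{\pi'} \euX$. The projection $\pi'$ is smooth as a base change of $q$ and the composite $\pi' \circ \phi = p$ is smooth by hypothesis, so by the cancellation property for smooth morphisms -- standard for schemes and extending to higher Artin stacks via smooth atlases -- the auxiliary map $\phi$ is also smooth.

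Next I apply \textbf{Lemma \ref{Lemma:cofibre_sequences_of_abelian_stacks_involving_the_normal_sheaf}}(1) to the composite $\euX' \xrightarrow{\phi} \tilde{\euX} \to \euY'$, with the first arrow smooth. This yields a cofibre sequence
\[
\Ncat_{\euX'} \tilde{\euX} \to \Ncat_{\euX'} \euY' \to \euX' \times_{\tilde{\euX}} \Ncat_{\tilde{\euX}} \euY'
\]
of abelian stacks over $\euX'$ whose left term is smooth, so that the right map is a smooth surjection. Because $\tilde{\euX} = \euX \times_\euY \euY'$ is a pullback along the smooth $q$, the smooth base-change formula (\textbf{Lemma \ref{Lemma:smooth_base_change_for_normal_sheaf}}) gives $\Ncat_{\tilde{\euX}} \euY' \simeq \tilde{\euX} \times_\euX \Ncat_\euX \euY$, so the target of the above smooth surjection simplifies to $\euX' \times_\euX \Ncat_\euX \euY$. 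The natural projection from this pullback to $\Ncat_\euX \euY$ is the base change of $p$ along $\Ncat_\euX \euY \to \euX$, hence is smooth, and is surjective precisely when $p$ is. Composing, $\Ncat_{\euX'} \euY' \to \Ncat_\euX \euY$ is smooth, and is a smooth surjection when both $p$ and $q$ are surjective.

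The main obstacle I foresee is confirming the smoothness of the auxiliary map $\phi$. The cancellation statement that $\phi$ is smooth whenever $\pi'$ and $\pi' \circ \phi$ are is a standard fact in classical algebraic geometry (see, e.g., EGA IV Proposition 17.11.1), and the analogue for higher Artin stacks should reduce by passing to smooth atlases, but unpacking this carefully is the least mechanical part of the argument. As a robustness check, the only cotangent-complex input the core step actually needs is that $L_{\euX'/\tilde{\euX}}$ is perfect of non-positive amplitude, which would also fall out of a direct amplitude analysis of the cofibre triangle $\phi^{\ast} L_{\tilde{\euX}/\euX} \to L_{\euX'/\euX} \to L_{\euX'/\tilde{\euX}}$, allowing one to invoke \textbf{Remark \ref{Remark:map_of_qcoh_sheaves_with_coconnective_perfect_cofibre_induces_cofibre_seq_of_abelian_stacks}} and \textbf{Theorem \ref{Theorem:cone_over_tor_bounded_qcoh_sheaf_is_artin_and_smooth_for_perfects}} directly and bypass the explicit geometric cancellation argument entirely.
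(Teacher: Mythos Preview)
Your argument is correct but takes a different factorisation from the paper. The paper decomposes $(\euX' \to \euY') \to (\euX \to \euY)$ through the intermediate relative stack $(\euX' \to \euY)$, i.e.\ first changing the target and then the source. This lets it invoke \emph{both} cofibre sequences of \textbf{Lemma \ref{Lemma:cofibre_sequences_of_abelian_stacks_involving_the_normal_sheaf}}: part (2) applied to $\euX' \to \euY' \to \euY$ handles $\Ncat_{\euX'}\euY' \to \Ncat_{\euX'}\euY$, and part (1) applied to $\euX' \to \euX \to \euY$ handles $\Ncat_{\euX'}\euY \to \Ncat_{\euX}\euY$ (via the same $\euX' \times_\euX \Ncat_\euX \euY$ step you use at the end). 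Your route instead passes through the fibre product $\tilde{\euX} = \euX \times_\euY \euY'$ and uses only part (1) together with smooth base change (\textbf{Lemma \ref{Lemma:smooth_base_change_for_normal_sheaf}}), at the cost of needing the cancellation property for smoothness to know that $\phi$ is smooth. The paper's decomposition sidesteps that cancellation step entirely, which is a mild simplification; on the other hand your factorisation reaches the conclusion with a single cofibre sequence rather than two, and the cancellation you need is indeed standard (and, as you observe, can be replaced by a direct amplitude bound on $L_{\euX'/\tilde{\euX}}$).
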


\begin{proof}
Any morphism $(\euX^{\prime} \rightarrow \euY^{\prime}) \rightarrow (\euX \rightarrow \euY)$ of relative Artin stacks can be decomposed as 

 \begin{center}
	\begin{tikzpicture}
		\node (TL) at (0, 1) {$\euX^{\prime} $};
		\node (TR) at (1, 1) {$\euY^{\prime} $};
		\node (BL) at (0 , 0) {$\euX^{\prime}$};
		\node (BR) at (1, 0){$\euY$};
		\node (BBL) at (0 , -1) {$\euX$};
		\node (BBR) at (1, -1){$\euY$};
		
		\draw[->] (TL) -- (TR);
		\draw[->] (BL) -- (BR);
		\draw[->] (TL) -- (BL);
		\draw[->] (TR) -- (BR);
		\draw[->] (BL) -- (BBL);
		\draw[->] (BR) -- (BBR);
		\draw[->] (BBL) -- (BBR);
	\end{tikzpicture};
\end{center}
that is, into a composite of two other morphisms for which either the map on the source or on the target is the identity. If the given morphism is smooth (resp. smooth and surjective), so are the two factors, so that we can reduce to this case. 

The fact that $\Ncat _{\euX^{\prime}} \euY^{\prime} \rightarrow \Ncat _{\euX^{\prime}} \euY$ is smooth and surjective is immediate from the second cofibre sequence of \cref{Lemma:cofibre_sequences_of_abelian_stacks_involving_the_normal_sheaf}. The first part of the same result implies that $\Ncat _{\euX^{\prime}} \euY \rightarrow \euX^{\prime} \times _{\euX} \Ncat_{\euX} \euY$ is smooth and surjective, and so the observation that $\euX^{\prime} \times _{\euX} \Ncat_{\euX} \euY \rightarrow \Ncat_{\euX} \euY$ is smooth (resp. smooth and surjective) whenever $\euX^{\prime} \rightarrow \euX$ is finishes the claim. 
\end{proof}

\begin{Proposition}
\label{Proposition:normal_sheaf_commutes_with_pullbacks_along_smooth_maps_relative_artin_stacks}
Let $(\euX^{\prime} \rightarrow \euY^{\prime}) \rightarrow (\euX \rightarrow \euY)$ be a smooth map of relative Artin stacks and let $(\euW \rightarrow \euZ) \rightarrow (\euX \rightarrow \euY)$ be arbitrary. Then, $\Ncat _{\euX^{\prime} \times _{\euX} \euW} (\euY^{\prime} \times _{\euY} \euZ) \simeq \Ncat_{\euX^{\prime}} \euY^{\prime} \times _{\Ncat_{\euX} \euY} \Ncat_{\euW} \euZ$ 
\end{Proposition}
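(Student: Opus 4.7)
The plan is to reduce to simpler cases via a factorization argument. Any smooth morphism of squares $(\euX' \to \euY') \to (\euX \to \euY)$ factors as
\begin{equation*}
(\euX' \to \euY') \to (\euX' \to \euY) \to (\euX \to \euY),
\end{equation*}
where the first factor has the identity on the source and the smooth map $\euY' \to \euY$ on the target, and the second factor has the smooth map $\euX' \to \euX$ on the source and the identity on the target. Both factors are themselves smooth morphisms of relative Artin stacks. Since a composite of Cartesian squares is Cartesian, it suffices to verify pullback-preservation of $\Ncat$ separately for each of these two cases.

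For the case of smooth source with identity on the target, I would apply the first cofibre sequence of Lemma \ref{Lemma:cofibre_sequences_of_abelian_stacks_involving_the_normal_sheaf} to the composite $\euW' \to \euW \to \euZ$, where $\euW' \to \euW$ is smooth as the pullback of $\euX' \to \euX$. This yields
\begin{equation*}
\Ncat_{\euW'}\euW \to \Ncat_{\euW'}\euZ \to \euW' \times_\euW \Ncat_\euW \euZ,
\end{equation*}
a cofibre sequence of abelian stacks over $\euW'$. Flat base-change for the cotangent complex (Remark \ref{Remark:cotangent_complex_of_a_stack_satisfies_smooth_basechange}) along the smooth map $\euX' \to \euX$, combined with base-change for the cone (Lemma \ref{Lemma:base_change_for_cone_functor}), identifies $\Ncat_{\euW'}\euW$ with $\euW' \times_{\euX'} \Ncat_{\euX'}\euX$. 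Applying the same lemma to $\euX' \to \euX \to \euY$ and pulling the resulting sequence back along $\euW' \to \euX'$, one obtains that $\Ncat_{\euX'}\euY \times_{\Ncat_\euX \euY} \Ncat_\euW \euZ$ is an extension of $\euW' \times_\euW \Ncat_\euW \euZ$ by the same abelian stack $\euW' \times_{\euX'} \Ncat_{\euX'}\euX$. The universal map from $\Ncat_{\euW'}\euZ$ to this pullback then fits into a morphism of cofibre sequences which induces equivalences on the extreme terms, hence an equivalence in the middle. The case of identity source with smooth target is handled by the analogous argument using the second cofibre sequence of Lemma \ref{Lemma:cofibre_sequences_of_abelian_stacks_involving_the_normal_sheaf} together with flat base-change along $\euY' \to \euY$ and Lemma \ref{Lemma:smooth_base_change_for_normal_sheaf}.

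The main obstacle will be verifying that the natural comparison map, produced from the universal property of the pullback, is compatible with the two cofibre-sequence identifications; this requires carefully tracking the functoriality of the cofibre sequences of Lemma \ref{Lemma:cofibre_sequences_of_abelian_stacks_involving_the_normal_sheaf} under morphisms of composable pairs of arrows. Once this compatibility is established, the desired equivalence in the middle term follows from the stability of abelian group objects (for which fibre and cofibre sequences agree), in the spirit of the five-lemma.
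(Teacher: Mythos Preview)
Your approach is essentially the same as the paper's: factor the smooth square into the two elementary cases, then in each case use the cofibre sequences of Lemma \ref{Lemma:cofibre_sequences_of_abelian_stacks_involving_the_normal_sheaf} to exhibit both $\Ncat_{\euW'}\euZ$ and the desired pullback as middle terms of cofibre sequences with matching outer terms, and conclude by comparing.

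There is one point where your description is not quite right and deserves care. In the first case, to obtain a cofibre sequence with middle term $\Ncat_{\euX'}\euY \times_{\Ncat_\euX \euY} \Ncat_\euW \euZ$, you cannot simply pull back the cofibre sequence for $\euX' \to \euX \to \euY$ along $\euW' \to \euX'$ as you write: that base-change yields $\euW' \times_{\euX'} \Ncat_{\euX'}\euY$ in the middle, which is not the object you want. The correct operation is to base-change along $\Ncat_\euW \euZ \to \Ncat_\euX \euY$, i.e.\ to apply the functor $- \times_{\Ncat_\euX \euY} \Ncat_\euW \euZ$. The paper makes this step explicit by writing the cofibre sequence as the augmented simplicial colimit encoding the $\Ncat_{\euX'}\euX$-action on $\Ncat_{\euX'}\euY$, and then observing that $- \times_{\Ncat_\euX \euY} \Ncat_\euW \euZ$ preserves this colimit because pullbacks commute with colimits in the $\infty$-topos $\stacks$. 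After this base-change, the left term is identified with $\Ncat_{\euW'}\euW$ via \textbf{Lemma \ref{Lemma:smooth_base_change_for_normal_sheaf}}, and the right term with $\euW' \times_\euW \Ncat_\euW \euZ$ by direct inspection; this is exactly the comparison you want, and the naturality you flag as the ``main obstacle'' is automatic from the functoriality of this base-change. The second case is handled identically.
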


\begin{proof}
Using the decomposition of a smooth morphism as in the proof of \cref{Lemma:normal_sheaf_functor_preserves_smooth_and_smoothly_surjective_maps_of_relative_artin_stacks} we can assume that the given smooth map is the identity in either the source or target. 

Let us tackle first the case when $\euY^{\prime} = \euY$. Using the notation $\euW^{\prime} := \euW \times _{\euX} \euX^{\prime}$, our goal is to show that $\Ncat_{\euW^{\prime}} \euZ \simeq \Ncat_{\euW} \euZ \times _{\Ncat _{\euX} \euY} \Ncat _{\euX^{\prime}} \euY$. By \cref{Lemma:cofibre_sequences_of_abelian_stacks_involving_the_normal_sheaf}, we have a cofibre sequence

\begin{center}
$\Ncat_{\euX^{\prime}} \euX \rightarrow \Ncat_{\euX^{\prime}} \euY \rightarrow \euX^{\prime} \times _{\euX} \Ncat_{\euX} \euY$
\end{center}
of abelian stacks over $\euX^{\prime}$. Said differently, the augmented simplicial object 

\begin{center}
$\ldots \triplerightarrow \Ncat_{\euX^{\prime}} \euX \times_{\euX^{\prime}} \Ncat_{\euX^{\prime}} \euY \rightrightarrows \Ncat_{\euX^{\prime}} \euY \rightarrow \euX^{\prime} \times _{\euX} \Ncat_{\euX} \euY$
\end{center}
determined by the action is a colimit diagram. By direct inspection, applying $- \times _{\Ncat_{\euX} \euY} \Ncat_{\euW} \euZ$ to this diagram yields 

\begin{center}
$\ldots \triplerightarrow \Ncat_{\euW^{\prime}} \euW \times_{\euW^{\prime}} (\Ncat_{\euX^{\prime}} \euY \times _{\Ncat_{\euX} \euY} \Ncat_{\euW} \euZ) \rightrightarrows \Ncat_{\euX^{\prime}} \euY \times _{\Ncat_{\euX} \euY} \Ncat_{\euW} \euZ \rightarrow \euW^{\prime} \times _{\euW} \Ncat_{\euW} \euZ$,
\end{center}
where we've used that the base-change formula $\Ncat_{\euW^{\prime}} \euW \simeq \euW^{\prime} \times _{\euX^{\prime}} \Ncat_{\euX^{\prime}} \euX$ of \cref{Lemma:smooth_base_change_for_normal_sheaf}. Since taking pullbacks in stacks preserves colimits, this presents a cofibre sequence 

\begin{center}
$\Ncat_{\euW^{\prime}} \euW \rightarrow \Ncat_{\euX^{\prime}} \euY \times _{\Ncat_{\euX} \euY} \Ncat_{\euW} \euZ \rightarrow \euW^{\prime} \times _{\euW} \Ncat_{\euW} \euZ$
\end{center}
of abelian stacks on $\euW^{\prime}$. Since $\Ncat_{\euW^{\prime}} \euZ$ is also a middle term of such a cofibre sequence by the same argument, and these cofibre sequences are natural, we deduce that $\Ncat_{\euW^{\prime}} \euZ \simeq \Ncat_{\euX^{\prime}} \euY \times _{\Ncat_{\euX} \euY} \Ncat_{\euW} \euZ$, which is what we wanted to show. 

Let us now suppose that $\euX^{\prime} = \euX$, our goal is to show that $\Ncat_{\euX} \euY^{\prime} \times _{\Ncat_{\euX} \euY} \Ncat _{\euW} \euZ \simeq \Ncat_{\euW} \euZ^{\prime}$, where $\euZ^{\prime} := \euZ \times _{\euY} \euY^{\prime}$. Using \cref{Lemma:cofibre_sequences_of_abelian_stacks_involving_the_normal_sheaf} again we have a cofibre sequence

\begin{center}
$\Omega _{\euX} (\euX \times _{\euY^{\prime}} \Ncat_{\euY^{\prime}} \euY) \rightarrow \Ncat _{\euX} \euY^{\prime} \rightarrow \Ncat _{\euX} \euY$.
\end{center}
and the same argument as before shows that by applying $- \times _{\Ncat _{\euX} \euY} \Ncat _{\euW} \euZ$ we get a cofibre sequence

\begin{center}
$\Omega _{\euW} (\euW \times _{\euZ^{\prime}} \Ncat_{\euZ^{\prime}} \euZ) \rightarrow \Ncat _{\euX} \euY^{\prime} \times _{\Ncat _{\euX} \euY} \Ncat _{\euW} \euZ \rightarrow \Ncat_{\euW} \euZ$.
\end{center}
Since $\Ncat_{\euW} \euZ^{\prime}$ is also a middle term of a cofibre sequence of this form, this ends the argument. 
\end{proof}
  
Observe that \cref{Example:normal_sheaf_functor_on_closed_embedding_of_affines_is_same_as_classical}, \cref{Lemma:normal_sheaf_functor_preserves_smooth_and_smoothly_surjective_maps_of_relative_artin_stacks} and \cref{Proposition:normal_sheaf_commutes_with_pullbacks_along_smooth_maps_relative_artin_stacks} taken together already verify all of the properties of the normal sheaf functor spelled out in the statement of \cref{Theorem:unique_characterization_of_a_normal_sheaf_of_morphisms_of_artin_stacks}. Thus, to complete the proof of the latter, we only have to check that $\Ncat$ is the \emph{unique} functor subject to these conditions.

Note that the only part of \cref{Theorem:unique_characterization_of_a_normal_sheaf_of_morphisms_of_artin_stacks} that specifies values of the normal sheaf without reference to anything else, is the property that $\Ncat_{U} V \simeq \mathbb{V}_{U}(I / I^{2})$ for a closed embedding of schemes with ideal sheaf $I$. As this context will occur frequently, let us introduce an appropriate terminology. 

\begin{Definition}
A \emph{pair} is a closed embedding $U \hookrightarrow V$ of schemes. The category $\pairs$ of pairs is a full subcategory of the $\infty$-category of relative Artin stacks. 
\end{Definition} 
In this language, we need to show that the normal sheaf functor is determined by its interaction with smooth maps together with its values at pairs of schemes. To show that the latter alone suffices, we will use the topology on $\relativeartinstacks$ determined by smooth surjections in the sense of \cref{Definition:smooth_and_surjective_maps_of_relative_artin_stacks}. The key is the following slightly surprising fact. 

\begin{Lemma}
\label{Lemma:closed_embedding_between_affines_generate_all_artin_stacks}
Any relative Artin stack admits a smooth surjection from a pair of schemes. In fact, the category $\pairs$ is a generating subcategory of $\relativeartinstacks$ in the sense of \cref{Definition:downward_closed_and_generating_subcategory}.
\end{Lemma}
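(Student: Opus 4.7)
The plan is first to construct a smooth surjection from a pair to an arbitrary $(\euX \to \euY) \in \relativeartinstacks$, and then to upgrade this to the generating property by induction on Artin level. For the first assertion, I would start by choosing a smooth surjection $V \to \euY$ from a scheme $V$, pulling back to obtain $\euX \times_{\euY} V \to V$, and then choosing a further smooth surjection $W \to \euX \times_{\euY} V$ from a scheme $W$. The composite $W \to V$ is locally of finite type between schemes, and hence factors Zariski-locally as a closed embedding into a relative affine space; after passing to affine covers of both $W$ and $V$ and, if necessary, appending trivial empty closed embeddings $\emptyset \hookrightarrow V_{\alpha}$ for any affine $V_{\alpha}$ of $V$ that is not hit, one assembles these into a single pair $(U \hookrightarrow V^{*})$ equipped with a smooth surjection to $(\euX \to \euY)$.

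For the generating property, the two closure conditions of \textbf{Definition \ref{Definition:downward_closed_and_generating_subcategory}} are straightforward: coproducts of pairs are pairs, and since pullbacks in $\relativeartinstacks$ are computed componentwise on source and target and closed embeddings of schemes are preserved under arbitrary base change, $\pairs$ is also closed under pullback along coverings.

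It remains to show that the smallest downward-closed subcategory $\Dcat$ containing $\pairs$ equals all of $\relativeartinstacks$. I argue by induction on $N := \max(n, m)$, where $\euX$ is $n$-Artin and $\euY$ is $m$-Artin. The crucial refinement of the construction above is to choose $V \to \euY$ so that it is a relative $(m-1)$-Artin atlas and $W \to \euX \times_{\euY} V$ so that the composite $W \to \euX$ is a relative $(N-1)$-Artin atlas, both available via the inductive clause of \textbf{Definition \ref{Definition:Artin_stacks}} together with the existence of scheme atlases of algebraic spaces. The $k$-th term of the \v{C}ech nerve of the resulting covering $(U \hookrightarrow V^{*}) \to (\euX \to \euY)$ is then of the form $(U^{\times_{\euX} k} \to (V^{*})^{\times_{\euY} k})$, whose source and target are at most $(N-1)$-Artin and $(m-1)$-Artin respectively, by \textbf{Proposition \ref{Proposition:properties_of_artin_morphisms}}. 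In the base case $N = 0$, these \v{C}ech terms are themselves pairs because algebraic spaces have schematic diagonals; in the inductive step they lie in $\Dcat$ by hypothesis, and the downward-closed property then places $(\euX \to \euY) \in \Dcat$.

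The main obstacle I anticipate is precisely this Artin-level bookkeeping in the inductive step: one must select the atlases carefully enough that the \v{C}ech nerve has strictly smaller Artin level, which relies on the sharper relative form of the atlas axiom from \textbf{Definition \ref{Definition:Artin_stacks}} rather than a generic scheme atlas, and requires keeping track of how relative Artin level behaves under composition and base change via \textbf{Proposition \ref{Proposition:properties_of_artin_morphisms}}.
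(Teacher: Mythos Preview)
Your overall strategy—induction on Artin level combined with local factorization of locally finite type morphisms through closed embeddings—matches the paper's in spirit, but there is a genuine gap in your base case.

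You claim that when $N = 0$ the \v{C}ech terms $(U^{\times_{\euX} k} \to (V^{*})^{\times_{\euY} k})$ are themselves pairs ``because algebraic spaces have schematic diagonals.'' Schematic diagonals only guarantee that $U^{\times_{\euX} k}$ and $(V^{*})^{\times_{\euY} k}$ are \emph{schemes}; they do not force the morphism between them to be a closed embedding. Indeed, the map factors as
\[
U \times_{\euX} U \longrightarrow U \times_{\euY} U \longrightarrow V^{*} \times_{\euY} V^{*},
\]
and while the second arrow is a closed embedding (base-change of $U \hookrightarrow V^{*}$), the first is a pullback of the diagonal $\euX \to \euX \times_{\euY} \euX$, which is a closed embedding only when $\euX \to \euY$ is separated. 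No such hypothesis is available.

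The paper circumvents this by decoupling the two reductions. First it reduces, by an easy induction, to the case where both source and target are disjoint unions of affine schemes. Then, for a morphism $X \to Y$ of such schemes, it chooses a factorization $X \hookrightarrow Y' \to Y$ with the second map smooth surjective, and uses the covering $(X \hookrightarrow Y') \to (X \to Y)$ which is the \emph{identity on the source}. The \v{C}ech terms are then $(X \hookrightarrow Y' \times_{Y} \cdots \times_{Y} Y')$, and these \emph{are} closed embeddings: the map is the composite of $X \hookrightarrow Y'$ with the diagonal of $Y' \to Y$, and $Y'$ is separated. Your argument can be repaired by inserting exactly this step once you have reduced to morphisms of schemes; without it, the base case does not close.
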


\begin{proof}
Since any $n$-Artin stack admits a smooth cover from a disjoint union of affine schemes whose iterated intersections are $(n-1)$-Artin stacks, it is easy to see by induction that the $\infty$-category of relative Artin stacks is generated by morphisms between such schemes. Thus, it is enough to check that the latter is generated by closed embeddings. 

Let $f: X \rightarrow Y$ be a relative Artin stack where $X, Y$ are disjoint unions of affine schemes, and let us choose one such decomposition $X \simeq \bigsqcup X_{\alpha}$. Since each $X_{\alpha}$ is quasi-compact, its image is contained in a finite disjoint union of affines, and hence an affine disjoint summand which we will denote by $Y_{\alpha} \subseteq Y$. 

Let us write $X_{\alpha} \simeq \spec(B_{\alpha})$, $Y_{\alpha} \simeq \spec(A_{\alpha})$. By assumption that $f$ is locally of finite type, $A_{\alpha} \rightarrow B_{\alpha}$ is finitely generated, and hence we can find a factorization 
\[
A_{\alpha} \hookrightarrow C_{\alpha} \twoheadrightarrow B_{\alpha},
\]
where $C_{\alpha} := A_{\alpha}[x_{1}, \ldots, x_{n}]$ for some $n$. It follows that if we write $Z_{\alpha} := \spec(C_{\alpha})$, then in the induced factorization 
\[
X_{\alpha} \hookrightarrow Z_{\alpha} \rightarrow Y_{\alpha}
\]
the first arrow is a closed inclusion of schemes and the second is smooth. Writing $Z := \bigsqcup Z_{\alpha}$, the same is true for 
\[
X  \rightarrow Z \rightarrow Y
\]
If we now let $Y^{\prime} := Z \sqcup Y$, with the induced map from $X$ the composite $Z \rightarrow Z \rightarrow Y^{\prime}$, then in the factorization
\[
X \rightarrow Y^{\prime} \rightarrow Y
\]
the first arrow is again a closed inclusion and the second one is now a smooth surjection.

The above factorization determines a smooth surjection $(X \hookrightarrow Y^{\prime}) \rightarrow (X \rightarrow Y)$ of relative Artin stacks whose source is a pair. However, the same is true for all of the iterated intersections, as they are of the form $X \hookrightarrow Y^{\prime} \times _{Y} \ldots \times_{Y} Y^{\prime}$ and these are easily seen to be closed embeddings of schemes. We deduce that $(X \rightarrow Y)$ is in the subcategory generated by pairs, ending the argument. 
\end{proof}

We are now ready to prove the main result of this section. 

\begin{proof}[Proof of Theorem \ref{Theorem:unique_characterization_of_a_normal_sheaf_of_morphisms_of_artin_stacks}]
We've already verified all of the requires properties of the normal sheaf functor $\Ncat: \relativeartinstacks \rightarrow \artinstacks$, all that is left is uniqueness. As a consequence of \cref{Proposition:adapted_cosheaves_are_functors_that_preserve_pullbacks_and_surjections}, any functor satisfying these properties is a cosheaf with respect to the smooth topology on Artin stacks. Since by \cref{Lemma:closed_embedding_between_affines_generate_all_artin_stacks} the subcategory of pairs of schemes is generating, \cref{Proposition:restriction_of_sheaves_to_generating_subcategory_is_an_equivalence} implies that any such cosheaf is uniquely determined by its restriction to $\pairs$, ending the proof. 
\end{proof}

\section{The normal cone of a morphism of Artin stacks}

In this section we generalize the construction of the normal cone of a closed embedding of schemes to any locally of finite type morphism of Artin stacks. We characterize our extension as the unique one satisfying certain natural axioms and verify that in the case of a morphism of Deligne-Mumford type, our construction agrees with that of Behrend and Fantechi. 

Recall that if $U \hookrightarrow V$ is a closed embedding of schemes with ideal sheaf $I$, then the \emph{normal cone} $\Ccat_{U} V := \spec_{U}(\bigoplus I^{k} / I^{k+1})$ is the relative spectrum of the associated graded $\mathcal{O}_{V}$-algebra. The construction of the normal cone is fundamental in intersection theory \cite{fulton2013intersection}.

\begin{Notation}
We will use the symbol $\Ccat$ to denote the normal cone of a morphism, rather than $\mathbb{V}$, which we reserve for the abelian cone associated to a quasi-coherent sheaf introduced in \cref{Definition:abelian_cone_associated_to_a_qcoh_sheaf}. The normal cone is usually not abelian. 
\end{Notation}

The normal cone is intimately related to the normal sheaf discussed in the previous chapter; the graded algebra $\bigoplus I^{k} / I^{k+1}$ is generated in degree $1$, and it follows that there is a canonical closed embedding $\Ccat_{U} V \hookrightarrow \Ncat_{U} V$ into the normal sheaf. If $U \hookrightarrow V$ is regular, this embedding is an isomorphism, and so one can consider the normal cone as a measure of non-smoothness. 

In \cref{Theorem:unique_characterization_of_a_normal_sheaf_of_morphisms_of_artin_stacks} we proved that the natural extension of the notion of a normal sheaf using the theory of the cotangent complex can be characterized uniquely by simple axioms. We will now prove that an analogous extension can also be constructed for the normal cone.

\begin{Theorem}
\label{Theorem:there_exists_a_unique_normal_cone_functor_subject_to_properties}
There exists a unique functor $\Ccat: \relativeartinstacks \rightarrow \artinstacks$ on the $\infty$-category of relative Artin stacks, called the \emph{normal cone}, such that: 
\begin{enumerate}
\item if $U \hookrightarrow V$ is a closed embedding of schemes, then $\Ccat_{U}V$ coincides with the classical normal cone, that is, $\Ccat_{U}V \simeq \spec_{U}(\bigoplus I^{k} / I^{k+1})$, where $I$ is the ideal sheaf
\item $\Ccat$ preserves coproducts
\item $\Ccat$ preserves smooth and smoothly surjective maps
\item $\Ccat$ commutes with pullbacks along smooth morphisms of relative Artin stacks
\end{enumerate}
\end{Theorem}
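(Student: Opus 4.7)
The plan is to mirror the proof of Theorem \ref{Theorem:unique_characterization_of_a_normal_sheaf_of_morphisms_of_artin_stacks}, reducing both uniqueness and existence to the generating subcategory $\pairs \subseteq \relativeartinstacks$ of closed embeddings of schemes supplied by Lemma \ref{Lemma:closed_embedding_between_affines_generate_all_artin_stacks}. For uniqueness, suppose $\Ccat$ satisfies (1)--(4). Axiom (4) gives pullback-preservation along the marking $T$ of smooth morphisms of relative Artin stacks; axiom (2) gives coproduct preservation; and the surjective half of axiom (3) sends smoothly surjective maps to smooth surjections of Artin stacks, which are effective epimorphisms in the topos of stacks. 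Proposition \ref{Proposition:adapted_cosheaves_are_functors_that_preserve_pullbacks_and_surjections} packages these into the $T$-adapted cosheaf property, and Proposition \ref{Proposition:restriction_of_sheaves_to_generating_subcategory_is_an_equivalence} shows that $\Ccat$ is determined up to canonical equivalence by its restriction to $\pairs$, which axiom (1) identifies with the classical normal cone.

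For existence, I would define $\Ccat_{0} : \pairs \to \artinstacks$ by the formula $\Ccat_{0}(U \hookrightarrow V) := \spec_{U}(\bigoplus_{k \geq 0} I^{k}/I^{k+1})$ and verify that it is a $T_{|\pairs}$-adapted cosheaf for the induced smooth-surjection topology on $\pairs$. The essential input is the classical smooth base-change result for normal cones: if $(U^{\prime} \hookrightarrow V^{\prime}) \to (U \hookrightarrow V)$ is a smooth morphism of pairs, then flatness of $V^{\prime} \to V$ gives $I_{U^{\prime}}^{k}/I_{U^{\prime}}^{k+1} \simeq (I^{k}/I^{k+1}) \otimes_{\mathcal{O}_{V}} \mathcal{O}_{V^{\prime}}$, whence $\Ccat_{U^{\prime}}V^{\prime} \simeq U^{\prime} \times_{U} \Ccat_{U}V$. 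Together with faithfully flat descent for quasi-coherent sheaves of algebras, this shows that $\Ccat_{0}$ commutes with pullbacks along smooth morphisms in $\pairs$ and sends smooth surjective maps in $\pairs$ to smooth surjections of schemes; Proposition \ref{Proposition:adapted_cosheaves_are_functors_that_preserve_pullbacks_and_surjections} then delivers the adapted cosheaf property. Invoking Proposition \ref{Proposition:restriction_of_sheaves_to_generating_subcategory_is_an_equivalence} to extend $\Ccat_{0}$ to a cosheaf $\Ccat$ on all of $\relativeartinstacks$, and applying Theorem \ref{Theorem:extension_of_an_adapted_cosheaf_is_adapted}, promotes adaptedness to the extension. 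Axiom (1) holds by construction, axiom (2) by cosheafiness, and axiom (4) by the adapted property.

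The main obstacle is the smoothness clause of axiom (3): given a smooth morphism $(\euX^{\prime} \to \euY^{\prime}) \to (\euX \to \euY)$ of relative Artin stacks, we must show that $\Ccat_{\euX^{\prime}}\euY^{\prime} \to \Ccat_{\euX}\euY$ is smooth. Surjectivity for a smoothly surjective morphism is immediate from cosheafiness, since cosheaves send coverings to effective epimorphisms and effective epimorphisms of Artin stacks are surjective. For smoothness, I would use Lemma \ref{Lemma:closed_embedding_between_affines_generate_all_artin_stacks} to choose a smooth surjection from a pair $P$ onto $(\euX \to \euY)$ and a compatible smooth surjection from a pair $P^{\prime}$ onto $(\euX^{\prime} \to \euY^{\prime})$ with $P^{\prime} \to P$ itself smooth. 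On $\pairs$, smoothness of the induced map $\Ccat_{0}(P^{\prime}) \to \Ccat_{0}(P)$ is standard, as by the smooth base-change established above it is the pullback of a scheme along a smooth map. By adaptedness, $\Ccat_{0}(P^{\prime}) \to \Ccat_{\euX^{\prime}}\euY^{\prime}$ and $\Ccat_{0}(P) \to \Ccat_{\euX}\euY$ are themselves smooth surjections, and since smoothness in $\artinstacks$ is local on source and target in the smooth topology, smoothness of $\Ccat_{\euX^{\prime}}\euY^{\prime} \to \Ccat_{\euX}\euY$ follows.
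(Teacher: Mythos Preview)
Your uniqueness argument is fine and matches the paper's. The existence argument, however, has a real gap at the central step.

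You claim that for any smooth morphism of pairs $(U' \hookrightarrow V') \to (U \hookrightarrow V)$, flatness of $V' \to V$ yields $I_{U'}^{k}/I_{U'}^{k+1} \simeq (I^{k}/I^{k+1}) \otimes_{\mathcal{O}_{V}} \mathcal{O}_{V'}$ and hence $\Ccat_{U'}V' \simeq U' \times_{U} \Ccat_{U}V$. This is false: a smooth morphism of pairs in the sense of Definition~\ref{Definition:smooth_and_surjective_maps_of_relative_artin_stacks} only requires both vertical arrows to be smooth, not that the square be cartesian. For the non-cartesian square $(\{0\} \hookrightarrow \mathbb{A}^{1}) \to (\spec(k) = \spec(k))$ one has $\Ccat_{\{0\}}\mathbb{A}^{1} \simeq \mathbb{A}^{1}$, whereas $\{0\} \times_{\spec(k)} \Ccat_{\spec(k)}\spec(k) \simeq \spec(k)$. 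Consequently your verification that $\Ccat_{0}$ is $T$-adapted on $\pairs$ collapses, and with it your argument for smoothness in axiom~(3), which rested on the same formula. What adaptedness actually demands is that for a smooth $(U' \hookrightarrow V') \to (U \hookrightarrow V)$ and an \emph{arbitrary} $(W \hookrightarrow Z) \to (U \hookrightarrow V)$ one has $\Ccat_{U' \times_{U} W}(V' \times_{V} Z) \simeq \Ccat_{U'}V' \times_{\Ccat_{U}V} \Ccat_{W}Z$; this does not reduce to flat base-change.

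The paper avoids a direct commutative-algebra attack by instead proving that the natural transformation $\Ccat \to \Ncat$ on $\pairs$ is smooth-cartesian (Proposition~\ref{Proposition:any_smooth_morphism_of_pairs_is_good_so_that_it_induces_cartesian_diagram_between_normal_cone_and_sheaf}), via a local factorization through \'{e}tale maps and the elementary computation $\Ccat_{X}(Y \times \mathbb{A}^{n}) \simeq \Ccat_{X}Y \times \mathbb{A}^{n}$. Since $\Ncat$ is already known to be an adapted cosheaf (Theorem~\ref{Theorem:unique_characterization_of_a_normal_sheaf_of_morphisms_of_artin_stacks}), Lemma~\ref{Lemma:property_of_being_an_adapted_cosheaf_descends_along_cartesian_morphism} immediately gives adaptedness of $\Ccat$ on $\pairs$, and Proposition~\ref{Proposition:cartesian_natural_transformations_of_adapted_cosheaves_are_stable_under_extension} transports the cartesian property to all of $\relativeartinstacks$; axiom~(3) then follows from the corresponding property of $\Ncat$. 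The paper also checks separately that the extension is Artin-valued, a point you omit.
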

Note that the content of the above result is slightly different than that of \cref{Theorem:unique_characterization_of_a_normal_sheaf_of_morphisms_of_artin_stacks} which concerned the normal sheaf, as in the latter case we constructed the functor a priori. In the case of the normal cone, the existence of this functor is part of the result. Nevertheless, the normal cone is strongly related to the normal sheaf, as the following shows.

\begin{Theorem}
\label{Theorem:normal_cone_is_a_closed_substack_of_normal_sheaf_and_has_cartesian_property_for_smooth_morphisms}
There is a unique natural transformation $\Ccat \rightarrow \Ncat$ of functors on relative Artin stacks which for every pair $U \hookrightarrow V$ of schemes with ideal sheaf $I$ coincides with the natural morphism $\Ccat_{U} V \rightarrow \Ncat_{U} V$ induced by the surjection $\textnormal{Sym} (I / I^{2}) \rightarrow \bigoplus I^{k} / I^{k+1}$. 

Moreover, for any relative Artin stack $\euX \rightarrow \euY$

\begin{enumerate}
\item $\Ccat_{\euX} \euY \rightarrow \Ncat_{\euX} \euY$ is a closed embedding and
\item for any smooth morphism $(\widetilde{\euX} \rightarrow \widetilde{\euY}) \rightarrow (\euX \rightarrow \euY)$ of relative Artin stacks the induced square

 \begin{center}
	\begin{tikzpicture}
		\node (TL) at (0, 1.5) {$ \Ccat_{\widetilde{\euX}} \widetilde{\euY} $};
		\node (TR) at (1.5, 1.5) {$ \Ncat_{\widetilde{\euX}} \widetilde{\euY} $};
		\node (BL) at (0 , 0) {$ \Ccat_{\euX} \euY $};
		\node (BR) at (1.5, 0){$ \Ncat_{\euX} \euY $};
		
		\draw[->] (TL) -- (TR);
		\draw[->] (BL) -- (BR);
		\draw[->] (TL) -- (BL);
		\draw[->] (TR) -- (BR);
	\end{tikzpicture}
\end{center}
is cartesian.
\end{enumerate}
\end{Theorem}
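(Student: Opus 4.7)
The approach is to reduce both the construction and the properties of the natural transformation to the generating subcategory $\pairs \subseteq \relativeartinstacks$ of pairs of schemes, using the adapted cosheaf machinery of Section \ref{Section:adapted_cosheaves}. By \textbf{Theorem \ref{Theorem:unique_characterization_of_a_normal_sheaf_of_morphisms_of_artin_stacks}} and \textbf{Theorem \ref{Theorem:there_exists_a_unique_normal_cone_functor_subject_to_properties}}, both $\Ncat$ and $\Ccat$ are $T$-adapted cosheaves on $\relativeartinstacks$, where $T$ denotes the class of smooth morphisms; by \textbf{Proposition \ref{Proposition:restriction_of_sheaves_to_generating_subcategory_is_an_equivalence}} combined with \textbf{Lemma \ref{Lemma:closed_embedding_between_affines_generate_all_artin_stacks}}, both functors are determined by their restrictions to $\pairs$.

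For existence and uniqueness of the natural transformation $\Ccat \to \Ncat$, observe that since restriction to $\pairs$ induces an equivalence of cosheaf $\infty$-categories, it is in particular an equivalence on mapping spaces between cosheaves. The specified natural transformation on $\pairs$ induced by the surjection of graded algebras $\sym(I/I^{2}) \twoheadrightarrow \bigoplus I^{k}/I^{k+1}$ therefore extends uniquely to a natural transformation of cosheaves on $\relativeartinstacks$, giving the first part of the statement.

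For the cartesian property (2), I first verify it on $\pairs$, where $T$ restricts to smooth morphisms of pairs of schemes. Factoring any such morphism through the base change of target along the smooth map of ambient schemes reduces to two cases: a pullback square of pairs along a smooth morphism, and a morphism of pairs whose target map is the identity, whose source map is a closed embedding of closed subschemes both smooth over a common base. In both cases, flat pullback of the powers $I^{k}/I^{k+1}$, combined with compatibility of the algebra surjection with base change, yields the cartesianness of the corresponding squares. Then \textbf{Proposition \ref{Proposition:cartesian_natural_transformations_of_adapted_cosheaves_are_stable_under_extension}} promotes this to the $T$-cartesian property of $\Ccat \to \Ncat$ on all of $\relativeartinstacks$.

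For the closed embedding property (1), given an arbitrary relative Artin stack $\euX \to \euY$, I pick a smooth surjection $(U \hookrightarrow V) \to (\euX \to \euY)$ from a pair of schemes, which exists by \textbf{Lemma \ref{Lemma:closed_embedding_between_affines_generate_all_artin_stacks}}. By \textbf{Theorem \ref{Theorem:unique_characterization_of_a_normal_sheaf_of_morphisms_of_artin_stacks}}(3), the induced morphism $\Ncat_{U}V \to \Ncat_{\euX}\euY$ is smooth and surjective, and by the cartesian property just established, its base-change of $\Ccat_{\euX}\euY \to \Ncat_{\euX}\euY$ is exactly $\Ccat_{U}V \hookrightarrow \Ncat_{U}V$, which is a closed embedding of schemes by the classical construction. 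Since being a closed embedding is smooth-local on the target for morphisms of algebraic stacks, the conclusion follows. The main technical step in this plan is the cartesian verification on $\pairs$: unlike the analogous property for $\Ncat$, which flows from flat base change of the cotangent complex, here one must argue directly with the graded algebra presentation and, in particular, carefully treat the decomposition step where the source of a pair is changed while the ambient scheme is held fixed.
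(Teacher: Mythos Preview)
Your overall architecture is correct and matches the paper: extend the natural transformation from $\pairs$ using \textbf{Proposition \ref{Proposition:restriction_of_sheaves_to_generating_subcategory_is_an_equivalence}}, promote the cartesian property via \textbf{Proposition \ref{Proposition:cartesian_natural_transformations_of_adapted_cosheaves_are_stable_under_extension}}, and deduce the closed embedding by pulling back along a smooth surjection from a pair. The paper does exactly this.

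The gap is in your verification of the cartesian property on $\pairs$. Your decomposition of a smooth morphism $(N \hookrightarrow M) \rightarrow (X \hookrightarrow Y)$ through $(X \times_Y M \hookrightarrow M)$ produces two pieces. The first, a flat cartesian square, is indeed good by flat base-change of the graded pieces $J^k/J^{k+1}$. But the second piece, $(N \hookrightarrow M) \rightarrow (X \times_Y M \hookrightarrow M)$, has the identity on the ambient scheme and a closed embedding $N \hookrightarrow X \times_Y M$ on sources. There is no flat map here, so your stated justification ``flat pullback of the powers $I^k/I^{k+1}$'' simply does not apply. This morphism of pairs is not even smooth in the sense of \textbf{Definition \ref{Definition:smooth_and_surjective_maps_of_relative_artin_stacks}}, so you are asserting goodness for a morphism outside the class you are ultimately treating, and that assertion needs its own argument. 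Writing out the algebras, you would need
\[
\bigoplus_k I^k/I^{k+1} \;\simeq\; \bigl(\bigoplus_k J^k/J^{k+1}\bigr)\otimes_{\sym(J/J^2 \otimes \mathcal{O}_N)} \sym(I/I^2),
\]
where $J \subseteq I$ are the ideals of $X \times_Y M$ and $N$ in $M$; this is not a formal consequence of regularity of $N \hookrightarrow X \times_Y M$, and you have not indicated how to prove it.

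The paper avoids this difficulty by a different local decomposition (\textbf{Proposition \ref{Proposition:any_smooth_morphism_of_pairs_is_good_so_that_it_induces_cartesian_diagram_between_normal_cone_and_sheaf}}): after shrinking $M$, one factors the smooth morphism of pairs as an \'etale morphism of pairs, followed by a map of the shape $(X \hookrightarrow Y \times \mathbb{A}^n) \rightarrow (X \hookrightarrow Y)$ with identity on the \emph{source}, followed by a flat cartesian square. The \'etale case is handled by strict henselization, and the middle case by the explicit formula $\Ccat_X(Y \times \mathbb{A}^n) \simeq \Ccat_X Y \times \mathbb{A}^n$. Note that this decomposition keeps the source fixed rather than the ambient scheme, which is precisely what makes the direct computation tractable. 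If you want to salvage your decomposition, you would need to supply an honest argument for your case (b); otherwise, adopt the paper's factorization.
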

The proofs of these two results are intimately related. Observe that \cref{Theorem:there_exists_a_unique_normal_cone_functor_subject_to_properties} implies that $\Ccat$ is a cosheaf adapted to the class of smooth maps in the sense of \cref{Definition:adapted_cosheaf}. Since we've shown before in \cref{Theorem:extension_of_an_adapted_cosheaf_is_adapted} that an adapted cosheaf can be uniquely extended from a generating subcategory, it is enough to verify that the classical normal cone of a closed embedding of schemes has the required properties. 

The latter is a problem in commutative algebra which can be tackled directly, but we will not do so. Instead, we verify that the cartesian property of \cref{Theorem:normal_cone_is_a_closed_substack_of_normal_sheaf_and_has_cartesian_property_for_smooth_morphisms} holds for smooth morphisms between pairs of schemes, the other  properties will then follow from what we've already proven about the normal sheaf. 

Before proceeding with the proofs, let us first show that expected properties of the normal cone follow from the above axiomatics. 

\begin{Lemma}
For any $\euX \rightarrow \euY$, the normal cone $\Ccat_{\euX} \euY$ is canonically a pointed stack over $\euX$. 
\end{Lemma}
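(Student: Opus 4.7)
The plan is to lift the canonical zero section of the normal sheaf through the closed embedding $\Ccat_{\euX}\euY \hookrightarrow \Ncat_{\euX}\euY$ supplied by \textbf{Theorem \ref{Theorem:normal_cone_is_a_closed_substack_of_normal_sheaf_and_has_cartesian_property_for_smooth_morphisms}}. Since $\Ncat_{\euX}\euY = C_{\euX}(L_{\euX/\euY}[-1])$ is an abelian cone, it is an abelian group object over $\euX$ and hence carries a canonical zero section $z: \euX \to \Ncat_{\euX}\euY$, natural in the relative Artin stack $\euX \to \euY$.

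It then suffices to show that $z$ factors through $\Ccat_{\euX}\euY$; since the embedding is a monomorphism, such a factorization is automatically unique and natural, and will yield the desired pointing. Form the pullback $P := \euX \times_{\Ncat_{\euX}\euY} \Ccat_{\euX}\euY$ along $z$ and the closed embedding. Then $P \to \euX$ is itself a closed embedding, and the desired factorization exists if and only if $P \to \euX$ is an equivalence, a property which can be verified smooth-locally on $\euX$.

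To check this, use \textbf{Lemma \ref{Lemma:closed_embedding_between_affines_generate_all_artin_stacks}} to choose a smooth surjection $(X \hookrightarrow Y) \to (\euX \to \euY)$ from a pair. The cartesian property of \textbf{Theorem \ref{Theorem:normal_cone_is_a_closed_substack_of_normal_sheaf_and_has_cartesian_property_for_smooth_morphisms}(2)} yields $\Ccat_{X} Y \simeq \Ncat_{X} Y \times_{\Ncat_{\euX}\euY} \Ccat_{\euX}\euY$, and combining this with the naturality of the zero section one computes $X \times_{\euX} P \simeq X \times_{\Ncat_{X} Y} \Ccat_{X} Y$, where now $X \to \Ncat_{X} Y$ is the zero section of the pair. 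The problem thus reduces to the classical statement that, for $X \hookrightarrow Y$ with ideal sheaf $I$, the zero section of $\Ncat_{X} Y = \spec_{X} \sym(I/I^{2})$ factors through $\Ccat_{X} Y = \spec_{X}(\bigoplus_{k \geq 0} I^{k}/I^{k+1})$, which holds because the augmentation $\sym(I/I^{2}) \twoheadrightarrow \mathcal{O}_{X}$ factors through the canonical surjection onto the associated graded via the latter's own augmentation.

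The main delicate step is the identification $X \times_{\euX} P \simeq X \times_{\Ncat_{X} Y} \Ccat_{X} Y$, which requires the zero section $\euX \to \Ncat_{\euX}\euY$ to be natural in $(\euX \to \euY)$ under arbitrary morphisms of relative Artin stacks, not merely smooth ones. Fortunately this is not a base-change property of the cotangent complex, but a formal consequence of the functoriality of the abelian cone construction and of the intrinsic nature of the zero point of an abelian group object, in line with \textbf{Lemma \ref{Lemma:base_change_for_cone_functor}} applied to the pulled-back cotangent complex. Once this is in hand, smooth descent of the equivalence $P \to \euX$ is automatic and produces the canonical pointing.
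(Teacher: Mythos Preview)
Your argument is correct, but it takes a different route from the paper's. The paper observes that both $\Ccat$ and the ``source'' functor $s(\euX\to\euY)=\euX$ are adapted cosheaves on $\relativeartinstacks$, so by \textbf{Proposition \ref{Proposition:restriction_of_sheaves_to_generating_subcategory_is_an_equivalence}} any natural transformation between them is determined by its restriction to $\pairs$. The classical projection $\Ccat_{X}Y\to X$ and its vertex section $X\to\Ccat_{X}Y$ are such natural transformations on $\pairs$, and hence extend uniquely to all relative Artin stacks. This argument uses only \textbf{Theorem \ref{Theorem:there_exists_a_unique_normal_cone_functor_subject_to_properties}} (that $\Ccat$ is an adapted cosheaf) and the cosheaf formalism of \S\ref{Section:adapted_cosheaves}.

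Your approach instead leans on \textbf{Theorem \ref{Theorem:normal_cone_is_a_closed_substack_of_normal_sheaf_and_has_cartesian_property_for_smooth_morphisms}}: you lift the zero section of $\Ncat_{\euX}\euY$ through the closed embedding $\Ccat_{\euX}\euY\hookrightarrow\Ncat_{\euX}\euY$, checking the factorization smooth-locally via the cartesian property. This is perfectly valid and more concrete, but it invokes the comparison with the normal sheaf where the paper does not need to. A minor point: your final paragraph worries about naturality of the zero section under \emph{arbitrary} morphisms, but the morphism $(X\hookrightarrow Y)\to(\euX\to\euY)$ you actually use is smooth, so naturality under smooth morphisms (which is immediate from the functoriality of $\Ncat$ as established in \textbf{Theorem \ref{Theorem:unique_characterization_of_a_normal_sheaf_of_morphisms_of_artin_stacks}}) already suffices. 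The paper's argument has the advantage that it also yields the projection $\Ccat_{\euX}\euY\to\euX$ in one stroke, and generalizes verbatim to extend any other natural structure (e.g.\ the $\mathbb{A}^1$-action noted in the subsequent remark) from pairs.
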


\begin{proof}
Observe that both $\Ccat$ and the "source" functor $s(\euX \rightarrow \euY) = \euX$ are adapted cosheaves on relative Artin stacks. It follows that any natural transformations between them defined for closed embeddings of affine schemes extend uniquely to all of $\relativeartinstacks$. In particular, for any $\euX \rightarrow \euY$ there's a canonical projection $\Ccat_{\euX} \euY \rightarrow \euX$, and this projection admits a canonical section, because that is the case in the classical setting. 
\end{proof}

\begin{Remark}
The classical normal cone $C_{X}Y$ of a closed embedding $X \hookrightarrow Y$ of schemes has more structure than just being pointed, namely, it is also equipped with an action of $\mathbb{A}^{1}$ induced by the grading of $\bigoplus I^{k} / I^{k+1}$. 

A slight variation in the arguments we give shows that the same is true for the normal cone for an arbitrary morphisms of Artin stacks. Namely, instead of considering $\Ccat$ as an adapted cosheaf valued in Artin stacks, one should consider it as valued in the $\infty$-category of morphisms $\EuScript{M} \rightarrow \euX$ where $\EuScript{M}$ is a pointed $\euX$-Artin stack equipped with an $\mathbb{A}^{1}$-action.
\end{Remark}

\begin{Lemma}
\label{Lemma:normal_cone_of_identity_of_an_artin_stack_is_trivial}
The normal cone of the identity is trivial; that is, for any $\euX$ we have $\Ccat_{\euX} \euX \simeq \euX$. 
\end{Lemma}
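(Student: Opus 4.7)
The plan is to recognize both the functor $\euX \mapsto \Ccat_\euX \euX$ and the identity functor on $\artinstacks$ as adapted cosheaves on $\artinstacks$ (for the smooth topology) which agree on the generating subcategory of schemes; the uniqueness of adapted cosheaves extending from a generating subcategory then forces them to be equivalent.

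First I would introduce the ``identity embedding'' $\iota: \artinstacks \to \relativeartinstacks$ sending $\euX$ to $\id_{\euX}: \euX \to \euX$. Because morphisms, limits and colimits in $\relativeartinstacks$ are computed componentwise in source and target, $\iota$ preserves coproducts, sends smooth (respectively, smoothly surjective) morphisms in $\artinstacks$ to smooth (respectively, smoothly surjective) morphisms in $\relativeartinstacks$, and commutes with pullbacks along smooth morphisms. Since $\Ccat$ is a cosheaf adapted to smooth morphisms on $\relativeartinstacks$ by \textbf{Theorem \ref{Theorem:there_exists_a_unique_normal_cone_functor_subject_to_properties}} combined with \textbf{Proposition \ref{Proposition:adapted_cosheaves_are_functors_that_preserve_pullbacks_and_surjections}}, the composite $F := \Ccat \circ \iota: \artinstacks \to \artinstacks$ is itself an adapted cosheaf for the smooth topology on $\artinstacks$. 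The identity functor $\id_{\artinstacks}$ is trivially such.

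Next I would exhibit a natural transformation $F \to \id_{\artinstacks}$, realized at each $\euX$ by the canonical projection $\Ccat_{\euX} \euX \to \euX$ constructed in the previous lemma. When $\euX = U$ is a scheme, the identity $U \hookrightarrow U$ has zero ideal sheaf, so axiom~(1) of \textbf{Theorem \ref{Theorem:there_exists_a_unique_normal_cone_functor_subject_to_properties}} gives $\Ccat_U U \simeq \spec_U(\mathcal{O}_U) \simeq U$, under which identification the projection becomes $\id_U$. Hence the natural transformation restricts to an equivalence on the generating subcategory of affine schemes.

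The last step is to invoke the uniqueness of adapted cosheaves extending from a generating subcategory, using \textbf{Proposition \ref{Proposition:restriction_of_sheaves_to_generating_subcategory_is_an_equivalence}} together with \textbf{Proposition \ref{Proposition:cartesian_natural_transformations_of_adapted_cosheaves_are_stable_under_extension}}, to conclude that $F \to \id_{\artinstacks}$ is an equivalence on all of $\artinstacks$. The only nontrivial content of the argument is verifying that $F$ is adapted, and this reduces entirely to routine bookkeeping about componentwise pullbacks in $\relativeartinstacks$; the classical case of a scheme with zero ideal sheaf takes care of the rest.
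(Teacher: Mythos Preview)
Your proof is correct and follows essentially the same approach as the paper: reduce to the classical case of schemes (where the ideal sheaf is zero) and use the cosheaf property of $\Ccat$ to extend to all Artin stacks. The paper phrases this more tersely, observing that the subcategory of $\euX$ with $\Ccat_{\euX}\euX \simeq \euX$ is downward closed, whereas you package the same descent argument through the adapted-cosheaf formalism; note that your invocation of \textbf{Proposition \ref{Proposition:cartesian_natural_transformations_of_adapted_cosheaves_are_stable_under_extension}} is superfluous, since \textbf{Proposition \ref{Proposition:restriction_of_sheaves_to_generating_subcategory_is_an_equivalence}} alone already shows that a morphism of cosheaves restricting to an equivalence on a generating subcategory is an equivalence.
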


\begin{proof}
Since $\Ccat$ is a cosheaf on relative Artin stacks, it is easy to see that the subcategory of those Artin stacks which satisfy the above condition is downward closed. Since it contains all affine schemes, we deduce that it must be all of $\artinstacks$. 
\end{proof}

\begin{Proposition}[Smooth base-change]
\label{Proposition:smooth_base_change}
The normal cone satisfies smooth base-change. That is, for any $\euX \rightarrow \euY$ and smooth $\euY^{\prime} \rightarrow \euY$ we have $\Ccat_{\euX^{\prime}} \euY^{\prime} \simeq \euX^{\prime} \times _{\euX} \Ccat_{\euX} \euY$, where $\euX^{\prime} \simeq \euY^{\prime} \times _{\euY} \euX$. 
\end{Proposition}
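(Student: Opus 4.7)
The plan is to deduce this immediately from axiom (4) of \textbf{Theorem \ref{Theorem:there_exists_a_unique_normal_cone_functor_subject_to_properties}}, combined with \textbf{Lemma \ref{Lemma:normal_cone_of_identity_of_an_artin_stack_is_trivial}}, rather than working with the normal cone directly. The strategy mirrors how one would prove the analogous statement for $\Ncat$ purely from its axiomatization in \textbf{Theorem \ref{Theorem:unique_characterization_of_a_normal_sheaf_of_morphisms_of_artin_stacks}}.

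First, I would realize $(\euX' \to \euY')$ as a pullback in $\relativeartinstacks$. The candidate auxiliary object is the identity $(\mathrm{id}: \euY \to \euY)$, together with two morphisms into it: the smooth morphism $(\euY' \to \euY') \to (\euY \to \euY)$ induced by $\euY' \to \euY$ on both source and target, and the morphism $(\euX \to \euY) \to (\euY \to \euY)$ given by $\euX \to \euY$ on the source and $\mathrm{id}_\euY$ on the target. Because limits in $\relativeartinstacks \simeq \Fun_{\textnormal{loc.f.t.}}(\Delta^1, \artinstacks)$ are computed termwise, the pullback of this cospan is exactly $(\euY' \times_\euY \euX \to \euY') = (\euX' \to \euY')$, and the morphism $(\euY' \to \euY') \to (\euY \to \euY)$ is smooth in the sense of \textbf{Definition \ref{Definition:smooth_and_surjective_maps_of_relative_artin_stacks}}.

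Applying axiom (4) then yields $\Ccat_{\euX'}\euY' \simeq \Ccat_{\euY'}\euY' \times_{\Ccat_\euY \euY} \Ccat_\euX \euY$, and invoking \textbf{Lemma \ref{Lemma:normal_cone_of_identity_of_an_artin_stack_is_trivial}} to identify $\Ccat_{\euY'}\euY' \simeq \euY'$ and $\Ccat_\euY \euY \simeq \euY$, this reduces to $\Ccat_{\euX'}\euY' \simeq \euY' \times_\euY \Ccat_\euX \euY$. To finish, I would use the canonical factorization of the projection $\Ccat_\euX \euY \to \euY$ through $\euX$, which rewrites the right-hand side as $\euX' \times_\euX \Ccat_\euX \euY$, giving the desired equivalence. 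No substantive obstacle arises; the main point to check carefully is simply the correct interpretation of axiom (4) as preservation of pullbacks in $\relativeartinstacks$, analogous to the explicit formulation given for $\Ncat$.
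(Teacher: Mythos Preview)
Your proposal is correct and matches the paper's own proof essentially verbatim: the paper also applies the pullback axiom (4) to the span $(\euX \rightarrow \euY) \rightarrow (\euY \rightarrow \euY) \leftarrow (\euY^{\prime} \rightarrow \euY^{\prime})$ and invokes \textbf{Lemma \ref{Lemma:normal_cone_of_identity_of_an_artin_stack_is_trivial}} to conclude. Your write-up is simply a more detailed unpacking of the same one-line argument.
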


\begin{proof}
Keeping in mind \cref{Lemma:normal_cone_of_identity_of_an_artin_stack_is_trivial}, this is immediate from applying the pullback axiom to the span $(\euX \rightarrow \euY) \rightarrow (\euY \rightarrow \euY) \leftarrow (\euY^{\prime} \rightarrow \euY^{\prime})$.
\end{proof}

\begin{Proposition}[\'{E}tale invariance]
\label{Proposition:etale_invariance_of_the_normal_cone_for_relative_artin_stacks}
Suppose we have a morphism $(\widetilde{\euX} \rightarrow \widetilde{\euY}) \rightarrow (\euX \rightarrow \euY)$ of relative Artin stacks which is \'{e}tale on both source and target. Then, $\Ccat_{\widetilde{\euX}} \widetilde{\euY} \simeq \widetilde{\euX} \times _{\euX} \Ccat _{\euX} \euY$. 
\end{Proposition}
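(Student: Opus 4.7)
The plan is to reduce the statement to the corresponding étale invariance of the normal sheaf using the cartesian property already established in \textbf{Theorem \ref{Theorem:normal_cone_is_a_closed_substack_of_normal_sheaf_and_has_cartesian_property_for_smooth_morphisms}}. Since étale morphisms are in particular smooth, the given arrow $(\widetilde{\euX} \to \widetilde{\euY}) \to (\euX \to \euY)$ is a smooth morphism of relative Artin stacks in the sense of \textbf{Definition \ref{Definition:smooth_and_surjective_maps_of_relative_artin_stacks}}, and that theorem produces a cartesian square comparing $\Ccat_{\widetilde{\euX}}\widetilde{\euY}$, $\Ncat_{\widetilde{\euX}}\widetilde{\euY}$, $\Ccat_\euX \euY$, and $\Ncat_\euX \euY$. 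Consequently $\Ccat_{\widetilde{\euX}}\widetilde{\euY} \simeq \Ncat_{\widetilde{\euX}}\widetilde{\euY} \times_{\Ncat_\euX \euY} \Ccat_\euX \euY$, so it will suffice to prove the analogous identification for the normal sheaf, namely $\Ncat_{\widetilde{\euX}}\widetilde{\euY} \simeq \widetilde{\euX} \times_\euX \Ncat_\euX \euY$; substituting this into the cartesian square and using associativity of fibre products collapses the resulting pullback to $\widetilde{\euX} \times_\euX \Ccat_\euX \euY$, which is exactly the claim.

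For the normal sheaf statement, my plan is to compute the cotangent complex directly. Writing $q: \widetilde{\euX} \to \euX$ for the étale structure map, the cofibre sequence associated to the composite $\widetilde{\euX} \to \euX \to \euY$ together with the vanishing of $L_{\widetilde{\euX}/\euX}$ for an étale morphism gives $L_{\widetilde{\euX}/\euY} \simeq q^{*} L_{\euX/\euY}$. Similarly, the cofibre sequence for $\widetilde{\euX} \to \widetilde{\euY} \to \euY$, combined with the vanishing of $L_{\widetilde{\euY}/\euY}$, gives $L_{\widetilde{\euX}/\widetilde{\euY}} \simeq L_{\widetilde{\euX}/\euY}$. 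Combining, one obtains $L_{\widetilde{\euX}/\widetilde{\euY}} \simeq q^{*} L_{\euX/\euY}$, and the base-change property of the abelian cone (\textbf{Lemma \ref{Lemma:base_change_for_cone_functor}}) then immediately produces the required equivalence $\Ncat_{\widetilde{\euX}}\widetilde{\euY} \simeq \widetilde{\euX} \times_\euX \Ncat_\euX \euY$.

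There is no genuine obstacle in this argument: the entire content lies in the cartesian square of \textbf{Theorem \ref{Theorem:normal_cone_is_a_closed_substack_of_normal_sheaf_and_has_cartesian_property_for_smooth_morphisms}}, which converts étale invariance for $\Ccat$ into the same question for $\Ncat$, where it is forced by the standard vanishing of the cotangent complex along étale morphisms. The only minor subtlety worth spelling out is the verification that the two cofibre sequences of cotangent complexes can be composed legitimately in our (non-derived) setting, but this is covered by the discussion around \textbf{Definition \ref{Definition:cotangent_complex_of_a_morphism_of_stacks}} and requires no further input.
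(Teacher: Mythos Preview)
Your proof is correct and follows essentially the same approach as the paper: both invoke the cartesian square of \textbf{Theorem \ref{Theorem:normal_cone_is_a_closed_substack_of_normal_sheaf_and_has_cartesian_property_for_smooth_morphisms}} to reduce to the \'{e}tale invariance of the normal sheaf. The paper simply cites this last fact, whereas you spell it out via the vanishing of the cotangent complex along \'{e}tale maps and \textbf{Lemma \ref{Lemma:base_change_for_cone_functor}}.
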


\begin{proof}
Since $(\widetilde{\euX} \rightarrow \widetilde{\euY}) \rightarrow (\euX \rightarrow \euY)$ is smooth, this is immediate from the cartesian square of \cref{Theorem:normal_cone_is_a_closed_substack_of_normal_sheaf_and_has_cartesian_property_for_smooth_morphisms} and the \'{e}tale invariance of the normal sheaf. 
\end{proof}
We can also verify that in the Deligne-Mumford case, our construction recovers the intrinsic normal cone of Behrend and Fantechi. 

\begin{Theorem}
\label{Theorem:comparison_with_Behrend_Fantechi_normal_cone}
Let $\euX \rightarrow \euY$ be a morphism of $1$-Artin stacks of finite type which is a relative Deligne-Mumford stack. Then, the normal cone $\Ccat_{\euX} \euY$ is equivalent to the intrinsic normal cone of Behrend-Fantechi. 
\end{Theorem}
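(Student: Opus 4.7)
The plan is to leverage the uniqueness part of \textbf{Theorem \ref{Theorem:there_exists_a_unique_normal_cone_functor_subject_to_properties}} directly. I would show that the Behrend--Fantechi intrinsic normal cone, viewed as a functor $\Ccat^{BF}$ on the full subcategory $\relativeartinstacks^{DM} \subseteq \relativeartinstacks$ of relatively Deligne--Mumford morphisms of finite type, satisfies the analogues of the four axioms: it returns the classical normal cone on pairs of schemes, preserves coproducts, preserves smooth and smoothly surjective maps, and commutes with pullbacks along smooth morphisms. Combined with the analogue of \textbf{Lemma \ref{Lemma:closed_embedding_between_affines_generate_all_artin_stacks}} in the DM setting (which goes through unchanged, since any relatively DM morphism admits a smooth surjection from a pair), the uniqueness argument using \textbf{Theorem \ref{Theorem:extension_of_an_adapted_cosheaf_is_adapted}} and \textbf{Proposition \ref{Proposition:restriction_of_sheaves_to_generating_subcategory_is_an_equivalence}} will then yield a canonical natural equivalence $\Ccat|_{\relativeartinstacks^{DM}} \simeq \Ccat^{BF}$ extending the identity on the common values on pairs.

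To verify the axioms for $\Ccat^{BF}$, I would first recall the Behrend--Fantechi construction: given a relative DM morphism $\euX \to \euY$, an étale atlas $U \to \euX$ together with a local factorization $U \hookrightarrow W \to \euY$ with $W \to \euY$ smooth provides the local model $[\Ccat_{U/W}/T_{W/\euY}|_{U}]$, and these local quotients are shown to glue uniquely to a stack over $\euX$. Axiom (1) holds by taking the trivial local factorization $U \hookrightarrow U \to U$, in which case the acting group is trivial and the quotient recovers the classical normal cone. Axiom (2) is immediate from the local nature of the construction. Axioms (3) and (4) — preservation of smooth and smoothly surjective maps and smooth base change — are exactly the compatibilities that Behrend--Fantechi use to glue their local models, and they follow from the fact that the quotient presentation $[\Ccat_{U/W}/T_{W/\euY}|_{U}]$ is manifestly stable under pullback along smooth morphisms of $\euY$ (and étale morphisms of $\euX$).

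The main obstacle I anticipate is bridging the two descriptions at the local model: matching the translation action of $T_{W/\euY}|_U$ on $\Ccat_{U/W}$ that appears in Behrend--Fantechi's local quotient with the cofibre-sequence structure of \textbf{Lemma \ref{Lemma:cofibre_sequences_of_abelian_stacks_involving_the_normal_sheaf}} applied to $U \to W \to \euY$ (restricted from normal sheaves to normal cones via the closed embedding of \textbf{Theorem \ref{Theorem:normal_cone_is_a_closed_substack_of_normal_sheaf_and_has_cartesian_property_for_smooth_morphisms}}). Concretely, for $U \hookrightarrow W \to \euY$ with $W \to \euY$ smooth, one has to identify $\Ccat^{BF}_{U}\euY$ with the pushout of the span $\Ccat_{U/W} \leftarrow \Ccat_{U/W} \times_U T_{W/\euY}|_U \rightarrow \Ccat_{U/W}$ given by projection and action, and then see this as the value of $\Ccat$ on $(U \to \euY)$ by interpreting $T_{W/\euY}|_U \simeq \Omega_U(U \times_W \Ncat_{W}\euY)$ and arguing as in the proof of \textbf{Proposition \ref{Proposition:normal_sheaf_commutes_with_pullbacks_along_smooth_maps_relative_artin_stacks}}. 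Once this local identification is pinned down, the uniqueness of the adapted cosheaf extension from $\pairs$ closes the comparison without further computation.
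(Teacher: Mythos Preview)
Your approach is valid but takes a longer route than the paper. Rather than verifying all four axioms for $\Ccat^{BF}$ and invoking uniqueness, the paper argues more directly: using only smooth base-change and \'{e}tale invariance (both standard for Behrend--Fantechi and proven for $\Ccat$ in \textbf{Propositions \ref{Proposition:smooth_base_change}} and \textbf{\ref{Proposition:etale_invariance_of_the_normal_cone_for_relative_artin_stacks}}), it reduces to a morphism $X \rightarrow Y$ of schemes. After choosing a factorization $X \hookrightarrow M \rightarrow Y$ with $M \rightarrow Y$ smooth surjective, the smooth-cartesian property of \textbf{Theorem \ref{Theorem:normal_cone_is_a_closed_substack_of_normal_sheaf_and_has_cartesian_property_for_smooth_morphisms}} applied to the smooth map $(X \hookrightarrow M) \rightarrow (X \rightarrow Y)$ gives the cartesian square
\[
\Ccat_{X} M \simeq \Ccat_{X} Y \times_{\Ncat_{X} Y} \Ncat_{X} M,
\]
and this is precisely Behrend--Fantechi's definition of the intrinsic normal cone (their 3.10). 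So the comparison goes through the normal sheaf, which has already been identified with theirs.

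What this buys the paper is exactly avoiding your ``main obstacle'': the cartesian square with $\Ncat$ already encodes the quotient $[\Ccat_{X} M / T_{M/Y}|_{X}]$, because $\Ncat_{X} M \rightarrow \Ncat_{X} Y$ is the quotient by that action. You never need to unpack the cofibre sequence or match group actions by hand. Your approach, by contrast, would need to verify axiom (4) for $\Ccat^{BF}$ in full --- stability under pullback along smooth maps on \emph{both} source and target, not just \'{e}tale on the source as your parenthetical suggests --- which, while true, is essentially re-proving what Behrend--Fantechi establish en route to gluing. The uniqueness argument then becomes overkill when a one-line identification via the normal sheaf is available.
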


\begin{proof}
Since both the Behrend-Fantechi intrinsic normal cone and the normal cone of \cref{Theorem:there_exists_a_unique_normal_cone_functor_subject_to_properties} satisfy smooth base-change and are \'{e}tale-invariant, the latter by \cref{Proposition:smooth_base_change} and \cref{Proposition:etale_invariance_of_the_normal_cone_for_relative_artin_stacks}, we can assume that we have a morphism $X \rightarrow Y$ of schemes. 

We can lift the given morphism to a closed embedding $i: X \hookrightarrow M$ such that $M \rightarrow Y$ is smooth and surjective. Observe that this then defines a smooth surjection 

\begin{center}
$(X \hookrightarrow M) \rightarrow (X \rightarrow Y)$
\end{center}
of relative Artin stacks. It follows from \cref{Theorem:normal_cone_is_a_closed_substack_of_normal_sheaf_and_has_cartesian_property_for_smooth_morphisms} that we have a cartesian diagram

 \begin{center}
	\begin{tikzpicture}
		\node (TL) at (0, 1.5) {$ \Ccat_{X} M $};
		\node (TR) at (2, 1.5) {$ \Ccat_{X} Y $};
		\node (BL) at (0 , 0) {$ \Ncat_{X} M $};
		\node (BR) at (2, 0){$ \Ncat_{X} Y $};
		
		\draw[->] (TL) -- (TR);
		\draw[->] (BL) -- (BR);
		\draw[->] (TL) -- (BL);
		\draw[->] (TR) -- (BR);
	\end{tikzpicture},
\end{center}
which is precisely how Behrend-Fantechi defined the intrinsic normal cone \cite{behrend_fantechi_intrinsic_normal_cone}[3.10].
\end{proof}

\begin{Remark}
Note that even in the classical Deligne-Mumford case, \cref{Theorem:there_exists_a_unique_normal_cone_functor_subject_to_properties} clarifies the construction of Behrend and Fantechi by showing that it is the only extension of the normal cone of a closed embedding of schemes that preserves certain natural properties.
\end{Remark}

The rest of this chapter will be devoted to the proofs of \cref{Theorem:there_exists_a_unique_normal_cone_functor_subject_to_properties} and \cref{Theorem:normal_cone_is_a_closed_substack_of_normal_sheaf_and_has_cartesian_property_for_smooth_morphisms}; as explained above, the main step is to establish that the classical normal cone has the required properties when restricted to the category of pairs $U \hookrightarrow V$ of schemes.

In more detail, we need to prove that the normal cone functor $\Ccat: \pairs \rightarrow \artinstacks$ preserves smooth and smoothly surjective morphisms, and commutes with pullbacks along smooth morphisms. To do so, it will be convenient to introduce some temporary terminology. 

\begin{Definition}
\label{Definition:good_morphism_of_pairs_of_schemes}
We say a morphism $f: (N \hookrightarrow M) \rightarrow (X \hookrightarrow Y)$ of pairs is \emph{good} if the induced diagram 

 \begin{center}
	\begin{tikzpicture}
		\node (TL) at (0, 1.5) {$ \Ccat_{N} M $};
		\node (TR) at (1.5, 1.5) {$ \Ncat_{N} M $};
		\node (BL) at (0 , 0) {$ \Ccat_{X} Y $};
		\node (BR) at (1.5, 0){$ \Ncat_{X} Y$};
		
		\draw[->] (TL) -- (TR);
		\draw[->] (BL) -- (BR);
		\draw[->] (TL) -- (BL);
		\draw[->] (TR) -- (BR);
	\end{tikzpicture}
\end{center}
between the normal cones and normal sheaves is cartesian.
\end{Definition}
Our goal is to prove that an arbitrary smooth morphisms of pairs; that is, one that is smooth on both source and target, is good. As an easy example, any flat cartesian morphism of pairs is good, as an easy consequence of the flat base-change for the normal cone and the normal sheaf. 

\begin{Remark}
It is not true that every morphism of pairs of schemes is good in the sense of \cref{Definition:good_morphism_of_pairs_of_schemes}. As an example, let $L \subseteq \mathbb{A}^{2}$ be the union of the coordinate axes and let $\spec(k) \hookrightarrow L$ be the inclusion of the origin. Then, one can verify that the obvious morphism $(\spec(k) \hookrightarrow L) \rightarrow (\spec(k) \rightarrow \mathbb{A}^{2})$ induces an isomorphism between normal bundles, but not between the normal cones. 
\end{Remark}

\begin{Lemma}
\label{Lemma:any_etale_morphism_of_pairs_is_good}
Any morphism $(\widetilde{X} \hookrightarrow \widetilde{Y}) \rightarrow (X \hookrightarrow Y)$ of pairs of schemes which is \'{e}tale on both source and target is good. 
\end{Lemma}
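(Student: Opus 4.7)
The plan is to reduce to the case when the square of schemes is cartesian, and then invoke flat base-change for both $\Ccat$ and $\Ncat$.

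First, I would examine the canonical factorization $\widetilde{X} \to X \times_{Y} \widetilde{Y} \to \widetilde{Y}$. The second map is a closed embedding as a base-change of $X \hookrightarrow Y$, and since its composition with the first is the closed embedding $\widetilde{X} \hookrightarrow \widetilde{Y}$, the first is also a closed embedding. On the other hand, since $\widetilde{X} \to X$ and $X \times_{Y} \widetilde{Y} \to X$ are both \'{e}tale (the latter as a base-change of $\widetilde{Y} \to Y$), the two-out-of-three property forces $\widetilde{X} \to X \times_{Y} \widetilde{Y}$ to be \'{e}tale too, and hence an open immersion. So $\widetilde{X}$ is a union of connected components of $X \times_{Y} \widetilde{Y}$, and by replacing $\widetilde{Y}$ with a Zariski open subscheme picking out the relevant clopen piece I can arrange $\widetilde{X} = X \times_{Y} \widetilde{Y}$; this shrinking affects neither $\Ccat_{\widetilde{X}} \widetilde{Y}$ nor $\Ncat_{\widetilde{X}} \widetilde{Y}$, since both are constructed from the ideal sheaf of $\widetilde{X}$ and its powers and therefore only depend on $\widetilde{Y}$ in an arbitrarily small neighborhood of $\widetilde{X}$.

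Now in the cartesian case, let $I$ and $\widetilde{I}$ be the ideal sheaves of $X$ in $Y$ and of $\widetilde{X}$ in $\widetilde{Y}$ respectively. Flatness of $\widetilde{Y} \to Y$ gives $\widetilde{I}^{k} \cong I^{k} \otimes_{\mathcal{O}_{Y}} \mathcal{O}_{\widetilde{Y}}$ for every $k \geq 0$, so passing to associated graded algebras and then to relative spectra yields $\Ccat_{\widetilde{X}} \widetilde{Y} \simeq \widetilde{X} \times_{X} \Ccat_{X} Y$. The analogous $\Ncat_{\widetilde{X}} \widetilde{Y} \simeq \widetilde{X} \times_{X} \Ncat_{X} Y$ is already known: it is \textbf{Lemma \ref{Lemma:smooth_base_change_for_normal_sheaf}} applied to the smooth morphism $\widetilde{Y} \to Y$. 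Combined, these identifications present the square of \textbf{Definition \ref{Definition:good_morphism_of_pairs_of_schemes}} as the obvious base-change square along $\widetilde{X} \to X$ of the closed embedding $\Ccat_{X} Y \hookrightarrow \Ncat_{X} Y$, which is manifestly cartesian. The only non-formal step is the reduction to the cartesian case in the first paragraph, which turns entirely on the standard fact that an \'{e}tale closed embedding is an open immersion.
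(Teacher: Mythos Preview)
Your argument is correct, and it takes a genuinely different route from the paper's. The paper localizes: using flat base-change for both $\Ccat$ and $\Ncat$, it replaces $Y$ by the spectrum of a strict henselization $\mathcal{O}_{Y,y}^{sh}$ at each point, over which any \'{e}tale cover splits as a disjoint union of copies of the base, whence the square is trivially cartesian. Your approach instead stays global and exploits the factorization through the fibre product: the observation that $\widetilde{X} \to X \times_{Y} \widetilde{Y}$ is an \'{e}tale closed immersion, hence clopen, lets you excise the complementary closed piece from $\widetilde{Y}$ and reduce directly to a flat cartesian square, where flat base-change for both $\Ccat$ and $\Ncat$ finishes the job. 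Your method is more elementary in that it avoids henselization entirely and makes explicit the one nontrivial scheme-theoretic input (\'{e}tale closed immersion $\Rightarrow$ open immersion); the paper's method is terser and fits the general ``localize until the \'{e}tale map trivializes'' philosophy. Either way the substance is the same flat base-change, and your reduction step is essentially a sharpened, global version of what the paper's \textbf{Lemma \ref{Lemma:the_property_of_a_morphism_of_pairs_being_good_is_local}} would give after henselizing.
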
 

\begin{proof}
Since both the normal sheaf and normal cone satisfy flat base-change, we can replace $Y$ by the spectrum of the strict henselization of the local ring at each of its points; then, both $X$ and $Y$ will be of this form. It follows that $\widetilde{X}$ is a disjoint union of copies of $X$ mapping onto it isomorphically, and likewise for $\widetilde{Y}$. The claim then follows.
\end{proof}

\begin{Lemma}
\label{Lemma:the_property_of_a_morphism_of_pairs_being_good_is_local}
Suppose we have a morphism $f: (N \hookrightarrow M) \rightarrow (X \hookrightarrow Y)$ of pairs of schemes. Then, $f$ is good if and only if for each $n \in N$ there exists an open neighbourhood $U \subseteq M$ of $n$  such that the restriction $(U \cap N \hookrightarrow U) \rightarrow (X \hookrightarrow Y)$ is good. 
\end{Lemma}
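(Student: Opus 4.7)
The plan is to prove both directions separately, with the forward direction being essentially formal and the reverse direction requiring a reduction step followed by smooth descent.

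For the forward direction, suppose $f$ is good and let $U \subseteq M$ be any open subset. The restriction morphism factors as
\begin{equation*}
(U \cap N \hookrightarrow U) \to (N \hookrightarrow M) \overset{f}{\to} (X \hookrightarrow Y).
\end{equation*}
The first arrow is induced by the open immersion $U \hookrightarrow M$, which is étale on both source and target, so by \textbf{Lemma \ref{Lemma:any_etale_morphism_of_pairs_is_good}} it is good. Since the defining square for good-ness is a pullback square, and pullback squares compose, the composite of two good morphisms is good. Hence the restriction is good for \emph{any} open $U \subseteq M$, which is more than needed.

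For the reverse direction, suppose we have chosen for each $n \in N$ a good open neighbourhood $U_n \subseteq M$. The first step is to reduce to the case where the $U_n$ cover $M$. Set $V := \bigcup_n U_n$, which is an open subscheme of $M$ containing $N$. I claim the canonical morphism $(N \hookrightarrow V) \to (N \hookrightarrow M)$ induces equivalences on both normal cones and normal sheaves: the classical normal cone $\Ccat_N M = \spec_N(\bigoplus I^k/I^{k+1})$ depends only on the $\mathcal{O}_N$-modules $I^k/I^{k+1}$, and these are supported on $N$ hence unchanged by shrinking $M$ to any open containing $N$; the same argument applies to $I/I^2$ and hence to $\Ncat_N M$. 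Consequently $(N \hookrightarrow M) \to (X \hookrightarrow Y)$ is good if and only if $(N \hookrightarrow V) \to (X \hookrightarrow Y)$ is, reducing the problem to the case $M = \bigcup_i U_i$.

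For this reduced case, the assertion that $f$ is good is the assertion that the natural map
\begin{equation*}
\Ccat_N M \longrightarrow \Ccat_X Y \times_{\Ncat_X Y} \Ncat_N M
\end{equation*}
of stacks over $N$ is an equivalence. The family $\{U_i \cap N \to N\}$ is an étale (in particular smooth) surjective cover of $N$. By flat base-change for the classical normal cone and the normal sheaf (\textbf{Lemma \ref{Lemma:smooth_base_change_for_normal_sheaf}}), pulling back the above morphism along $U_i \cap N \to N$ yields
\begin{equation*}
\Ccat_{U_i \cap N} U_i \longrightarrow \Ccat_X Y \times_{\Ncat_X Y} \Ncat_{U_i \cap N} U_i,
\end{equation*}
which is precisely the morphism expressing that $(U_i \cap N \hookrightarrow U_i) \to (X \hookrightarrow Y)$ is good, hence an equivalence by hypothesis. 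Since being an equivalence of $N$-stacks is étale-local on $N$, the original morphism is an equivalence, so $f$ is good.

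The main obstacle is the reduction step shrinking $M$ to $V$; once that is in place, the smooth descent argument is routine. The key input making the reduction valid is the fact that the ideal-sheaf-based construction of both $\Ccat_N M$ and $\Ncat_N M$ is insensitive to enlarging $M$ beyond a neighbourhood of $N$.
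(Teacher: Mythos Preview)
Your proof is correct and follows essentially the same strategy as the paper's: both reduce the question to checking that the comparison map $\Ccat_N M \to \Ccat_X Y \times_{\Ncat_X Y} \Ncat_N M$ of $N$-schemes is an isomorphism Zariski-locally on $N$, using the flat base-change identity $\Ccat_{U\cap N} U \simeq \Ccat_N M \times_N (U\cap N)$ (and likewise for $\Ncat$).

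One remark: your reduction step shrinking $M$ to $V=\bigcup_n U_n$ is not needed. The sets $U_n\cap N$ already cover $N$ by hypothesis, and since the comparison map lives over $N$, that is all you require to run the descent argument; whether the $U_n$ cover $M$ itself is irrelevant. The paper's proof exploits this directly, handling both directions at once by the single observation that restricting the comparison map along $U\cap N\hookrightarrow N$ yields exactly the comparison map for $(U\cap N\hookrightarrow U)$. Your forward direction via \textbf{Lemma \ref{Lemma:any_etale_morphism_of_pairs_is_good}} is also slightly heavier than necessary: an open immersion gives a flat \emph{cartesian} morphism of pairs, which is good for the trivial reason that both normal cone and normal sheaf satisfy flat base-change.
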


\begin{proof}
Observe that for any such neighbourhood we have $\Ccat_{U \cap N} U \simeq \Ccat_{N} M \times _{N} (U \cap N)$ and likewise for the normal sheaf, and that as $n \in N$ varies the open sets $U \cap N$ form a covering of $N$. Then, since $\Ccat_{N} M \rightarrow \Ccat_{X} Y \times _{\Ncat_{X} Y} \Ncat _{N} M$ is a morphism of $N$-schemes, the claim is then equivalent to saying that is in isomorphism if and only if the same is true for the maps 

\begin{center}
$\Ccat_{U \cap N} M \simeq \Ccat_{N} M \times _{N} (U \cap N) \rightarrow \Ccat_{X} Y \times _{\Ncat_{X} Y} \Ncat _{N} M \times _{N} (U \cap N) \simeq \Ccat_{X} Y \times _{\Ncat_{X}Y} \Ncat_{U \cap N} U$.
 \end{center}
\end{proof}

\begin{Lemma}
\label{Lemma:change_in_normal_cone_when_composing_with_inclusion_into_affine_space}
Suppose we have a morphism of pairs of schemes of the form 

 \begin{center}
	\begin{tikzpicture}
		\node (TL) at (0, 1.2) {$ X $};
		\node (TR) at (1.5, 1.2) {$ Y \times \mathbb{A}^{n} $};
		\node (BL) at (0 , 0) {$ X  $};
		\node (BR) at (1.5, 0){$ Y $};
		
		\draw[->] (TR) -- (BR);
		\draw[->] (TL) -- (BL);
		\draw[right hook->] (BL) -- (BR);
		\draw[right hook->] (TL) -- (TR);
	\end{tikzpicture},
\end{center}
where the left vertical arrow is the identity, the right one is the projection, and the upper horizontal arrow is the composite $X \hookrightarrow Y \times \{ 0 \} \hookrightarrow Y \times \mathbb{A}^{n}$ of the lower one with the natural inclusion. Then, any such morphism is good. 
\end{Lemma}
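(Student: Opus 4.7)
The plan is to reduce to the affine case and then directly compute both the normal sheaf and normal cone of $X \hookrightarrow Y \times \mathbb{A}^n$, showing that each is canonically the product of the corresponding invariant of $X \hookrightarrow Y$ with $\mathbb{A}^n$. Since both $\Ncat$ and $\Ccat$ can be computed Zariski-locally on $Y$, we may assume $Y = \spec(A)$ and $X = \spec(A/I)$. Set $B = A[t_1, \ldots, t_n]$, so that the ideal of $X$ in $Y \times \mathbb{A}^n$ is $\widetilde{I} = IB + (t_1, \ldots, t_n)$, and write $R = A/I$ and $S = \bigoplus_k I^k/I^{k+1}$.

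A direct calculation, filtering $B$ by $t$-degree, shows that $\widetilde{I}^k = \bigoplus_m I^{\max(0, k-m)} V_m$, where $V_m \subseteq B$ is the $A$-module spanned by monomials of $t$-degree $m$. Taking successive quotients and reindexing by $j = k-m$ yields an isomorphism of graded $R$-algebras
\[
\bigoplus_k \widetilde{I}^k / \widetilde{I}^{k+1} \;\cong\; S \otimes_R R[t_1, \ldots, t_n].
\]
Specializing to $k = 1$ gives $\widetilde{I}/\widetilde{I}^2 \cong I/I^2 \oplus R^n$, and since the abelian cone functor converts direct sums to fibre products over the base (\textbf{Lemma \ref{Lemma:cone_functor_takes_colimits_to_limits}}), we obtain canonical identifications
\[
\Ccat_X(Y \times \mathbb{A}^n) \cong \Ccat_X Y \times_X (X \times \mathbb{A}^n), \qquad \Ncat_X(Y \times \mathbb{A}^n) \cong \Ncat_X Y \times_X (X \times \mathbb{A}^n).
\]

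Tracing through functoriality, these identifications are compatible with the maps induced by $A \hookrightarrow B$ (which become the first projections, recovering the vertical arrows of \textbf{Definition \ref{Definition:good_morphism_of_pairs_of_schemes}}) and with the natural surjection $\sym(\widetilde{I}/\widetilde{I}^2) \twoheadrightarrow \bigoplus_k \widetilde{I}^k/\widetilde{I}^{k+1}$, which, after the identifications, becomes $\sym(I/I^2) \twoheadrightarrow S$ tensored with $R[t_1, \ldots, t_n]$ over $R$. Geometrically, the horizontal map $\Ccat_X(Y \times \mathbb{A}^n) \to \Ncat_X(Y \times \mathbb{A}^n)$ is therefore the base change of $\Ccat_X Y \to \Ncat_X Y$ along $\Ncat_X Y \times \mathbb{A}^n \to \Ncat_X Y$, so the required square is a pullback.

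The main subtlety is the bookkeeping in computing $\bigoplus_k \widetilde{I}^k/\widetilde{I}^{k+1}$ via the $t$-adic filtration; once that is in hand, the compatibility checks are essentially automatic from the explicit descriptions, and the two canonical identifications fit together to exhibit goodness.
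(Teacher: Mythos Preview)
Your proof is correct and takes essentially the same approach as the paper: the paper simply asserts that one can compute directly $\Ccat_{X}(Y \times \mathbb{A}^{n}) \simeq (\Ccat_{X} Y) \times \mathbb{A}^{n}$ (citing an external reference for the details), observes that the analogous formula holds for the normal sheaf, and concludes. You have spelled out that direct computation explicitly via the Rees-type decomposition $\widetilde{I}^{k} = \bigoplus_{m} I^{\max(0,k-m)} V_{m}$, which is exactly the content behind the paper's citation.
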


\begin{proof}
In this case, one can compute directly that $\Ccat_{X} (Y \times \mathbb{A}^{n}) \simeq (\Ccat_{X} Y) \times \mathbb{A}^{n}$, see \cite{battistella2018virtual_classes_for_working_mathematician}[3.5.1], and since an analogous formula holds for the normal sheaf, the claim follows. 
\end{proof}

\begin{Proposition}
\label{Proposition:any_smooth_morphism_of_pairs_is_good_so_that_it_induces_cartesian_diagram_between_normal_cone_and_sheaf}
Any smooth morphism $(N \hookrightarrow M) \rightarrow (X \hookrightarrow Y)$ of pairs of schemes is good. In other words, for any such morphism we have $\Ccat_{N} M \simeq \Ccat_{X} Y \times_{\Ncat_{X}Y} \Ncat_{N}M$.
\end{Proposition}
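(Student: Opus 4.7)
The strategy will be to factor the given smooth morphism of pairs, after a sequence of local reductions, as a composition of two pieces whose goodness is already established, and then invoke the pasting lemma for cartesian rectangles. First, by \textbf{Lemma \ref{Lemma:the_property_of_a_morphism_of_pairs_being_good_is_local}} goodness is Zariski-local on $M$, and flat base change for both the classical normal cone and for $\Ncat$ (\textbf{Lemma \ref{Lemma:smooth_base_change_for_normal_sheaf}}) makes it local on $Y$ as well. Combined with \textbf{Lemma \ref{Lemma:any_etale_morphism_of_pairs_is_good}} and the pasting lemma, we may freely replace $M$ by any étale cover. The étale-local structure of smooth morphisms then reduces us to the model situation $M = Y \times \mathbb{A}^n$ with $M \to Y$ the projection onto the first factor.

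In this setting the closed embedding $N \hookrightarrow M$ factors through $X \times \mathbb{A}^n$, since the composite $N \hookrightarrow M \to Y$ lands in $X$, and both $N$ and $X \times \mathbb{A}^n$ are smooth over $X$, so $N \hookrightarrow X \times \mathbb{A}^n$ is a regular embedding. Lifting a regular sequence generating its ideal to $Y \times \mathbb{A}^n$ and applying the Jacobian criterion will yield a further étale modification of $M$ (absorbed by the reductions of the previous step) in which the pair $(N \hookrightarrow M)$ is brought to the standard form $(X \times \mathbb{A}^d \hookrightarrow Y \times \mathbb{A}^n)$, with closed embedding $(X \hookrightarrow Y) \times (\mathbb{A}^d \hookrightarrow \mathbb{A}^n)$, where $\mathbb{A}^d \hookrightarrow \mathbb{A}^n$ is the $\{0\}$-section in the last $n - d$ coordinates.

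In this model situation I will then decompose the morphism of pairs as
\[
\bigl(X \times \mathbb{A}^d \hookrightarrow Y \times \mathbb{A}^n\bigr) \xrightarrow{\gamma} \bigl(X \times \mathbb{A}^d \hookrightarrow Y \times \mathbb{A}^d\bigr) \xrightarrow{\delta} \bigl(X \hookrightarrow Y\bigr),
\]
where $\gamma$ has identity upper arrow and lower arrow the projection $Y \times \mathbb{A}^n = (Y \times \mathbb{A}^d) \times \mathbb{A}^{n-d} \to Y \times \mathbb{A}^d$, and $\delta$ is the cartesian square whose vertical arrows are the smooth projections $- \times \mathbb{A}^d \to -$. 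The morphism $\gamma$ will be good by direct application of \textbf{Lemma \ref{Lemma:change_in_normal_cone_when_composing_with_inclusion_into_affine_space}} with ambient scheme $Y \times \mathbb{A}^d$ and affine factor $\mathbb{A}^{n-d}$, while $\delta$ will be good because flat base change identifies both horizontal rows of its defining square with the pullback of $\Ccat_X Y \to \Ncat_X Y$ along the smooth projection $Y \times \mathbb{A}^d \to Y$, making the square tautologically cartesian. The pasting lemma for cartesian rectangles then delivers the goodness of the composition. The principal technical point will be the coordinate straightening of the second paragraph --- one must ensure that the successive étale changes of $M$ used to trivialize $M \to Y$ and to straighten the regular embedding $N \hookrightarrow X \times \mathbb{A}^n$ combine into a single étale modification --- but this is routine given the Jacobian criterion and the Zariski locality of the reductions.
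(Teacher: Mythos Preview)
Your proposal is correct and follows essentially the same route as the paper: reduce étale-locally to the standard pair $(X \times \mathbb{A}^m \hookrightarrow Y \times \mathbb{A}^n)$, then split into a smooth cartesian square and a square of the type handled by \textbf{Lemma \ref{Lemma:change_in_normal_cone_when_composing_with_inclusion_into_affine_space}}, and conclude by pasting. The one point worth sharpening is the phrase ``reduces us to the model situation $M = Y \times \mathbb{A}^n$'': the local structure theorem only gives an étale $Y$-map $M \to Y \times \mathbb{A}^n$ after Zariski shrinking, not an identification, so one cannot literally set $M = Y \times \mathbb{A}^n$ and then separately straighten $N$. The paper avoids this by producing in a single step an étale morphism of \emph{pairs} $(N \hookrightarrow M) \to (X \times \mathbb{A}^m \hookrightarrow Y \times \mathbb{A}^n)$, applying the structure theorem to the closed embedding $N \hookrightarrow X \times_Y M$ of smooth $X$-schemes and then lifting the resulting regular functions to $M$. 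This is precisely the ``single étale modification'' you flag as the principal technical point, so your outline and the paper's proof converge.
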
 

\begin{proof}
Observe that the induced morphism $N \hookrightarrow X \times _{Y} M$ is a closed embedding of smooth $X$-schemes. It follows that by picking $n \in N$ and choosing a smaller affine neighbourhood of its image in $M$, which we can do by \cref{Lemma:the_property_of_a_morphism_of_pairs_being_good_is_local}, we can assume that there are regular functions $(g_{i})_{1 \leq i \leq n}$ on $X \times _{Y} M$ and an $m \leq n$ such that the resulting diagram 

 \begin{center}
	\begin{tikzpicture}
		\node (TL) at (0, 1) {$ N $};
		\node (TR) at (2.5, 1) {$ X \times _{Y} M $};
		\node (BL) at (0 , 0) {$ X \times \mathbb{A}^{m} $};
		\node (BR) at (2.5, 0){$ X \times \mathbb{A}^{n} $};
		
		\draw[->] (TR) -- (BR);
		\draw[->] (TL) -- (BL);
		\draw[right hook->] (BL) -- (BR);
		\draw[right hook->] (TL) -- (TR);
	\end{tikzpicture},
\end{center}
where the bottom arrow is the natural inclusion, is cartesian and with vertical arrows \'{e}tale \cite{sga1}[Exp.II, Prop 4.9]. By lifting those regular functions to all of $M$, we can extend the right vertical arrow to a morphism $M \rightarrow Y \times \mathbb{A}^{n}$ and by making $M$ smaller if necessary we can assume that the latter is \'{e}tale as well. We can then consider the larger diagram 

 \begin{center}
	\begin{tikzpicture}
		\node (TL) at (0, 1) {$ N $};
		\node (TR) at (2.5, 1) {$ M $};
		\node (BL) at (0 , 0) {$ X \times \mathbb{A}^{m} $};
		\node (BR) at (2.5, 0){$ Y \times \mathbb{A}^{n} $};
		\node (BBL) at (0 , -1) {$ X \times \mathbb{A}^{m} $};
		\node (BBR) at (2.5, -1){$ Y \times \mathbb{A}^{m} $};
		\node (BBBL) at (0 , -2) {$ X $};
		\node (BBBR) at (2.5, -2){$ Y $};
		
		\draw[->] (TR) -- (BR);
		\draw[->] (TL) -- (BL);
		\draw[->] (BR) -- (BBR);
		\draw[->] (BL) -- (BBL);
		\draw[->] (BBR) -- (BBBR);
		\draw[->] (BBL) -- (BBBL);
		\draw[right hook->] (BBL) -- (BBR);
		\draw[right hook->] (BBBL) -- (BBBR);
		\draw[right hook->] (BL) -- (BR);
		\draw[right hook->] (TL) -- (TR);
	\end{tikzpicture},
\end{center}
where the map $Y \times \mathbb{A}^{n} \rightarrow Y \times \mathbb{A}^{m}$ is the obvious projection. Out of the three squares stacked on top of each other, the bottom one is good because it is smooth cartesian and the top one by \cref{Lemma:any_etale_morphism_of_pairs_is_good}. Since the middle square is good by \cref{Lemma:change_in_normal_cone_when_composing_with_inclusion_into_affine_space} and composition of good squares is good by the pullback pasting lemma, we are done. 
\end{proof}

We are now ready to give proofs of the two main results of this chapter. 

\begin{proof}[Proof of \cref{Theorem:there_exists_a_unique_normal_cone_functor_subject_to_properties} and \cref{Theorem:normal_cone_is_a_closed_substack_of_normal_sheaf_and_has_cartesian_property_for_smooth_morphisms}]
We first claim that $\Ccat: \pairs \rightarrow \artinstacks$ is an  cosheaf on the site of pairs of schemes adapted to the class of smooth maps. We have a natural transformation $\Ccat \rightarrow \Ncat$, which as we verified in \cref{Proposition:any_smooth_morphism_of_pairs_is_good_so_that_it_induces_cartesian_diagram_between_normal_cone_and_sheaf} is smooth-cartesian in the sense of \cref{Definition:tcartesian_transformation_between_cosheaves}. It then follows from \cref{Lemma:property_of_being_an_adapted_cosheaf_descends_along_cartesian_morphism} that $\Ccat$ is adapted, because this is true for the normal sheaf as a consequence of  \cref{Theorem:unique_characterization_of_a_normal_sheaf_of_morphisms_of_artin_stacks}. 

We thus deduce from \cref{Proposition:restriction_of_sheaves_to_generating_subcategory_is_an_equivalence} and \cref{Lemma:closed_embedding_between_affines_generate_all_artin_stacks} that the normal cone functor uniquely extends to a $\stacks$-valued cosheaf on all relative Artin stacks, and moreover that this cosheaf is also adapted by \cref{Theorem:extension_of_an_adapted_cosheaf_is_adapted}. We will now show that for any relative Artin stack $\euX \rightarrow \euY$, the stack $\Ccat_{\euX} \euY$ is in fact Artin. 

We claim that the subcategory of those relative Artin stacks for which the normal cone is Artin is downward closed, since we know it contains all pairs of schemes, this will imply the claim. Suppose that $(\euX_{0} \rightarrow \euY_{0}) \rightarrow (\euX \rightarrow \euY)$ is a smooth surjection such that $\Ccat _{\euX_{k}} \euY_{k}$ is Artin, where $\euX_{k} := \euX_{0} \times _{\euX} \ldots \times _{\euX} \euX_{0}$ and likewise for $\euY$. Since $\Ccat$ is an adapted cosheaf, we see that the diagram 

\begin{center}
$\ldots \triplerightarrow \Ccat_{\euX_{1}} \euY_{1} \rightrightarrows \Ccat_{\euX_{0}} \euY_{0} \rightarrow \Ccat_{\euX} \euY$ 
\end{center}
is an effective groupoid in the $\infty$-topos $\stacks$. Thus, $\Ccat_{\euX} \euY$ admits a smooth relatively Artin surjection from an Artin stack, and it follows that it itself must be Artin, see \cite{antieau_gepner_brauer_groups}[4.30].

We have a natural transformation $\Ccat \rightarrow \Ncat$ defined on the category of pairs of schemes, and since both the source and target are cosheaves, it follows from another application of \cref{Proposition:restriction_of_sheaves_to_generating_subcategory_is_an_equivalence} that this natural transformation uniquely extends to one defined on all relative Artin stacks. 

Since this natural transformation yields cartesian squares when applied to any smooth morphism of pairs of schemes, as we verified in \cref{Proposition:any_smooth_morphism_of_pairs_is_good_so_that_it_induces_cartesian_diagram_between_normal_cone_and_sheaf}, it follows formally through \cref{Proposition:cartesian_natural_transformations_of_adapted_cosheaves_are_stable_under_extension} that it has this property for any smooth morphism of relative Artin stacks. It follows from this that $\Ccat$ preserves smooth and smoothly surjective morphisms, finishing the proof of \cref{Theorem:there_exists_a_unique_normal_cone_functor_subject_to_properties}.

Since we already constructed the natural transformation $\Ccat \rightarrow \Ncat$ and we checked that it is smooth-cartesian, to prove \cref{Theorem:normal_cone_is_a_closed_substack_of_normal_sheaf_and_has_cartesian_property_for_smooth_morphisms} we're only left with checking that for any relative Artin stack $\euX \rightarrow \euY$, the resulting morphism $\Ccat_{\euX} \euY \rightarrow \Ncat _{\euX} \euY$ is a closed embedding. Choose a smooth surjection $(X \hookrightarrow Y) \rightarrow (\euX \rightarrow \euY)$ whose source is a pair of schemes, it follows that the diagram 

 \begin{center}
	\begin{tikzpicture}
		\node (TL) at (0, 1.5) {$ \Ccat_{X} Y $};
		\node (TR) at (1.5, 1.5) {$ \Ncat_{X} Y $};
		\node (BL) at (0 , 0) {$ \Ccat_{\euX} \euY $};
		\node (BR) at (1.5, 0){$ \Ncat_{\euX} \euY $};
		
		\draw[->] (TL) -- (TR);
		\draw[->] (BL) -- (BR);
		\draw[->] (TL) -- (BL);
		\draw[->] (TR) -- (BR);
	\end{tikzpicture}
\end{center}
is cartesian and that both vertical arrows are smooth surjections. Since it clear that the top horizontal arrow is a closed embedding, we deduce that the same is true for the bottom one, ending the proof.
\end{proof}

\section{The deformation space}
\label{Section:the_deformation_space}

Deforming a  closed embedding of schemes $ X \hookrightarrow Y$ into the zero section imbedding of $X$ into $\Ccat_X Y$ is a fundamental procedure in intersection theory, known as the deformation to the normal cone. In this section we will generalize this construction to any locally of finite type morphism of Artin stacks.

Recall that for a closed embedding $ X \hookrightarrow Y$ of schemes, the deformation $M^\circ_X Y$ is a flat scheme over $\mathbb{P}^{1}$ which fits into a commutative diagram

 \begin{center}
	\begin{tikzpicture}
		\node (TL) at (-1 , 1) {$ X \times \mathbb{P}^{1} $};
		\node (TR) at (1, 1) {$ M^\circ_X Y  $};
		\node (B) at (0, 0){$ \mathbb{P}^{1} $};
		
		\draw[right hook->] (TL) -- (TR);
		\draw[->] (TL) -- (B);
		\draw[->] (TR) -- (B);
	\end{tikzpicture}
\end{center}
such that

\begin{enumerate}
\item over $\mathbb{A}^{1} \simeq \mathbb{P}^{1} - \{ \infty \}$ the horizontal arrow is isomorphic to $X \times \mathbb{A}^{1} \hookrightarrow Y \times \mathbb{A}^{1}$ and
\item over $\{ \infty \}$, the horizontal arrow is isomorphic to $X \hookrightarrow \Ccat_{X}Y$.
\end{enumerate}

Explicitly, $ M^\circ_X Y$ can be constructed as the difference 

\begin{center}
$ M^\circ_X Y : =  \text{Bl}_{X \times \{ \infty \}} Y \times \mathbb{P}^1 - \text{Bl}_{X \times \{\infty \}} Y \times \{\infty \}$
\end{center}
between two blow-ups along $X \times \{ \infty \}$. Alternatively, if $X \hookrightarrow Y \simeq \spec(A)$ is defined by ideal $I$, then the restriction of $M^\circ_X Y $ to $\mathbb{P}^1 - \{ 0 \} \simeq \mathbb{A}^{1} \simeq \spec(k[t])$ is isomorphic to the spectrum of the Rees algebra $R(A, I) := \bigoplus_{k \in \mathbb{Z}} I^{k} t^{-k} \subseteq A[t, t^{-1}]$.

\begin{Lemma} 
\label{Lemma:the_deformation_space_sends_smooth_morphisms_of_pairs_to_smooth_morphisms_and_preserves_pullbacks}
The deformation space functor $M^\circ: \pairs \rightarrow \artinstacks _{/ \mathbb{P}^{1}}$ preserves coproducts, smooth morphisms, smooth surjections and commutes with pullbacks along smooth maps. In particular, it is a cosheaf adapted to the class of smooth maps. 
\end{Lemma}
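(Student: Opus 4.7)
The plan is to verify each property via the explicit description of $M^\circ_X Y$ given just above the lemma, using the standard two-chart open cover of $\mathbb{P}^1$. Preservation of coproducts is immediate, since blow-ups and complements of closed subschemes both commute with disjoint unions. Over the chart $\mathbb{P}^1 \setminus \{\infty\} \simeq \mathbb{A}^1$, the deformation space reduces to $Y \times \mathbb{A}^1$, so on this chart the functor is simply $(X \hookrightarrow Y) \mapsto Y \times \mathbb{A}^1$, which manifestly has all four required properties. The substance of the lemma is therefore contained in the other chart $\mathbb{P}^1 \setminus \{0\}$, where for an affine pair $X = \spec(A/I) \hookrightarrow Y = \spec(A)$ the deformation is $\spec(R(A, I))$ with $R(A, I) = \bigoplus_{k \in \mathbb{Z}} I^k t^{-k} \subseteq A[t, t^{-1}]$.

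On this chart I would adapt the strategy of the proof of Proposition \ref{Proposition:any_smooth_morphism_of_pairs_is_good_so_that_it_induces_cartesian_diagram_between_normal_cone_and_sheaf}, reducing an arbitrary smooth morphism of affine pairs to a composite of three special types, each easy to analyze directly from the Rees algebra formula. The first is a flat cartesian morphism $(X' \hookrightarrow Y') \to (X \hookrightarrow Y)$ with $X' \simeq X \times_Y Y'$; here flatness of $A \to A'$ yields $R(A, I) \otimes_A A' \simeq R(A', IA')$, because tensoring with a flat module preserves the $I^k$-filtration, so $M^\circ_{X'} Y' \simeq M^\circ_X Y \times_Y Y'$. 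The second is a zero-section inclusion $(X \hookrightarrow Y) \to (X \hookrightarrow Y \times \mathbb{A}^n)$, for which a direct computation gives $M^\circ_X(Y \times \mathbb{A}^n) \simeq M^\circ_X Y \times \mathbb{A}^n$. The third is an \'etale morphism of pairs, which after further shrinking can be handled by the same argument as in the first case, since an \'etale morphism is flat and locally becomes cartesian over the target's pair.

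Combined with the pullback-pasting lemma, these three cases yield both preservation of smoothness and smooth surjectivity, and commutation with pullbacks along smooth morphisms. The main obstacle is purely organizational: carrying out the local factorization cleanly and assembling the three computations into the global cartesian property, exactly as in the proof of Proposition \ref{Proposition:any_smooth_morphism_of_pairs_is_good_so_that_it_induces_cartesian_diagram_between_normal_cone_and_sheaf}, with the Rees algebra playing the role previously played by the associated graded. Finally, the last sentence of the lemma follows at once from Proposition \ref{Proposition:adapted_cosheaves_are_functors_that_preserve_pullbacks_and_surjections}, since once $M^\circ$ is known to preserve coproducts and smooth pullbacks and to send smooth surjections to smooth surjections, the latter are automatically effective epimorphisms in $\artinstacks_{/\mathbb{P}^1}$.
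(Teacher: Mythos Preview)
Your approach is correct but takes a substantially different route from the paper. You work chart-by-chart over $\mathbb{P}^1$ and, on the nontrivial chart, redo the factorization argument of \textbf{Proposition \ref{Proposition:any_smooth_morphism_of_pairs_is_good_so_that_it_induces_cartesian_diagram_between_normal_cone_and_sheaf}} with the Rees algebra replacing the associated graded. This works (modulo a small slip: your ``zero-section inclusion'' should be the \emph{projection} $(X \hookrightarrow Y \times \mathbb{A}^n) \to (X \hookrightarrow Y)$, which is the smooth direction; the computation $M^\circ_X(Y \times \mathbb{A}^n) \simeq M^\circ_X Y \times \mathbb{A}^n$ you cite is nonetheless the right one), and for each of your three special types the pullback property with respect to an arbitrary second morphism follows from the base-change formula you wrote down, together with pullback pasting.

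The paper's argument is much shorter and conceptually cleaner: since $M^\circ_X Y \to \mathbb{P}^1$ is flat for every pair, all four properties can be checked \emph{fibrewise} via the standard fibral criteria for smoothness, surjectivity, and isomorphism. The fibre over any $t \neq \infty$ is $Y$, for which the claims are trivial, and the fibre over $\infty$ is $\Ccat_X Y$, for which the claims are exactly the content of \textbf{Theorem \ref{Theorem:there_exists_a_unique_normal_cone_functor_subject_to_properties}} already established. In effect the paper observes that the normal cone is the only nontrivial fibre and that its good behaviour has already been proven, so no new Rees-algebra computation is needed. Your approach trades the invocation of fibral criteria for a hands-on verification, but the price is reproving for $M^\circ$ what was already proved for $\Ccat$; the paper's route exploits the deformation structure to get the lemma essentially for free.
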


\begin{proof} 
Since for any pair $X \hookrightarrow Y$ the deformation space $M^\circ_{X} Y$ is flat over $\mathbb{P}^{1}$, it suffices to check all of the claims fibrewise. This is clear, since $M^\circ_{X} Y \times _{\mathbb{P}^{1}} \{ t \} \simeq Y$ for $t \neq \infty$, $M^\circ_{X} Y \times _{\mathbb{P}^{1}} \{ \infty \} \simeq \Ccat_{X}Y$ and both of the right hand sides have these properties, the latter by \cref{Theorem:there_exists_a_unique_normal_cone_functor_subject_to_properties}.
\end{proof}

\begin{Theorem} 
\label{Theorem:existence_and_properties_of_deformation_space}
For any relative Artin stack $\euX \rightarrow \euY$ there exists an Artin stack $M^\circ_{\euX} \euY$ which fits into a commutative diagram

 \begin{center}
	\begin{tikzpicture}
		\node (TL) at (-1 , 1) {$ \euX \times \mathbb{P}^{1} $};
		\node (TR) at (1, 1) {$ M^\circ_{\euX} \euY  $};
		\node (B) at (0, 0){$ \mathbb{P}^{1} $};
		
		\draw[right hook->] (TL) -- (TR);
		\draw[->] (TL) -- (B);
		\draw[->] (TR) -- (B);
	\end{tikzpicture}
\end{center}
where both vertical arrows are flat and such that

\begin{enumerate}
\item over $\mathbb{A}^{1} \simeq \mathbb{P}^{1} - \{ \infty \}$, the horizontal arrow is isomorphic to $\euX \times \mathbb{A}^{1} \hookrightarrow \euY \times \mathbb{A}^{1}$ and
\item over $\{ \infty \}$, the horizontal arrow is isomorphic to $\euX \hookrightarrow \Ccat_{\euX}\euY$.
\end{enumerate}
\end{Theorem}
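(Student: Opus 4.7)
The plan is to apply the adapted cosheaf extension machinery of Section~\ref{Section:adapted_cosheaves} to the deformation space, mirroring the construction of the normal cone in the proofs of Theorems~\ref{Theorem:there_exists_a_unique_normal_cone_functor_subject_to_properties} and~\ref{Theorem:normal_cone_is_a_closed_substack_of_normal_sheaf_and_has_cartesian_property_for_smooth_morphisms}. By Lemma~\ref{Lemma:the_deformation_space_sends_smooth_morphisms_of_pairs_to_smooth_morphisms_and_preserves_pullbacks} the assignment $(X \hookrightarrow Y) \mapsto M^{\circ}_{X}Y$ is a cosheaf $M^\circ \colon \pairs \rightarrow \artinstacks_{/\mathbb{P}^{1}}$ adapted to the class of smooth morphisms. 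Since pairs generate $\relativeartinstacks$ by Lemma~\ref{Lemma:closed_embedding_between_affines_generate_all_artin_stacks}, Proposition~\ref{Proposition:restriction_of_sheaves_to_generating_subcategory_is_an_equivalence} together with Theorem~\ref{Theorem:extension_of_an_adapted_cosheaf_is_adapted} furnishes a unique extension to a smooth-adapted cosheaf $M^\circ \colon \relativeartinstacks \rightarrow \stacks_{/\mathbb{P}^{1}}$, simultaneously producing both the stack $M^{\circ}_{\euX}\euY$ and its projection to $\mathbb{P}^{1}$. The closed immersion $\euX \times \mathbb{P}^{1} \hookrightarrow M^{\circ}_{\euX}\euY$ is then obtained by extending the natural transformation defined on pairs by $(X \hookrightarrow Y) \mapsto (X \times \mathbb{P}^{1} \hookrightarrow M^{\circ}_{X}Y)$, viewed as a morphism between the smooth-adapted cosheaves $(\euX \to \euY) \mapsto \euX \times \mathbb{P}^{1}$ and $M^\circ$.

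Next I would verify that the extension takes Artin values by the downward-closure argument used for $\Ccat$: picking a smooth surjection $(X \hookrightarrow Y) \to (\euX \to \euY)$ from a pair as in the proof of Lemma~\ref{Lemma:closed_embedding_between_affines_generate_all_artin_stacks}, the iterated fibre products remain pairs of schemes (the relevant diagonals are closed immersions for separated morphisms), and the cosheaf property combined with adaptedness presents $M^{\circ}_{\euX}\euY$ as an effective groupoid colimit of Artin stacks, hence Artin by \cite{antieau_gepner_brauer_groups}[4.30]. The fibre identifications then follow from another application of uniqueness of cosheaf extensions. The pullback functor $j^{*} := (-) \times_{\mathbb{P}^{1}} \mathbb{A}^{1}$ along the open immersion $j \colon \mathbb{A}^{1} \hookrightarrow \mathbb{P}^{1}$ admits both a left adjoint $j_{!}$ and a right adjoint $j_{*}$, so it preserves both limits and colimits; consequently both $(\euX \to \euY) \mapsto M^{\circ}_{\euX}\euY \times_{\mathbb{P}^{1}} \mathbb{A}^{1}$ and $(\euX \to \euY) \mapsto \euY \times \mathbb{A}^{1}$ are smooth-adapted cosheaves on $\relativeartinstacks$. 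They agree on pairs by construction, so Proposition~\ref{Proposition:restriction_of_sheaves_to_generating_subcategory_is_an_equivalence} upgrades the agreement to all relative Artin stacks. The analogous argument for the closed immersion $i \colon \{\infty\} \hookrightarrow \mathbb{P}^{1}$, whose $i^{*}$ is likewise a left adjoint to $i_{*}$ as well as a pullback, compared against the smooth-adapted cosheaf $\Ccat$ of Theorem~\ref{Theorem:there_exists_a_unique_normal_cone_functor_subject_to_properties}, produces the identification $M^{\circ}_{\euX}\euY \times_{\mathbb{P}^{1}} \{\infty\} \simeq \Ccat_{\euX}\euY$.

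Flatness of the vertical maps to $\mathbb{P}^{1}$ is smooth-local on the source, and closedness of the embedding $\euX \times \mathbb{P}^{1} \hookrightarrow M^{\circ}_{\euX}\euY$ is smooth-local on the target; both therefore reduce to the pair case already handled by Lemma~\ref{Lemma:the_deformation_space_sends_smooth_morphisms_of_pairs_to_smooth_morphisms_and_preserves_pullbacks}, provided one can produce a smooth atlas $M^{\circ}_{X}Y \twoheadrightarrow M^{\circ}_{\euX}\euY$ from a pair. The main technical obstacle is therefore showing that the extended $M^\circ$ preserves smoothness and smooth surjections of relative Artin stacks, and not merely effective epimorphisms as guaranteed by the cosheaf condition via Proposition~\ref{Proposition:adapted_cosheaves_are_functors_that_preserve_pullbacks_and_surjections}. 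I would handle this by the strategy used for $\Ccat \to \Ncat$ in the proof of Theorem~\ref{Theorem:normal_cone_is_a_closed_substack_of_normal_sheaf_and_has_cartesian_property_for_smooth_morphisms}: construct on pairs a smooth-cartesian natural transformation from $M^\circ$ to a simpler cosheaf whose smoothness preservation is manifest (built by gluing the target functor over $\mathbb{A}^{1}$ with the normal cone $\Ccat$ over $\{\infty\}$, consistently with the fibre identifications just established), and then invoke Proposition~\ref{Proposition:cartesian_natural_transformations_of_adapted_cosheaves_are_stable_under_extension} to propagate the cartesian property to all relative Artin stacks, from which smoothness preservation for $M^\circ$ follows by base-change of smoothness along the cartesian square.
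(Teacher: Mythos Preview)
Your proposal follows essentially the same strategy as the paper: extend $M^\circ$ from pairs via the adapted cosheaf machinery, extend the natural transformation from $(-)\times\mathbb{P}^1$, identify the fibres by comparing adapted cosheaves that agree on pairs, and deduce flatness from a smooth atlas. The paper's proof is terser than yours---it does not separately argue that the extension lands in Artin stacks (implicitly relying on the same downward-closure argument used for $\Ccat$), and it asserts directly that $M^\circ_X Y \to M^\circ_{\euX}\euY$ is a smooth surjection ``by Lemma~\ref{Lemma:the_deformation_space_sends_smooth_morphisms_of_pairs_to_smooth_morphisms_and_preserves_pullbacks}'', even though that lemma only treats morphisms of pairs. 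You are right to flag smoothness preservation as the substantive point.

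That said, your proposed fix---a smooth-cartesian transformation to a cosheaf ``built by gluing the target functor over $\mathbb{A}^1$ with $\Ccat$ over $\{\infty\}$''---is more elaborate than necessary and not obviously well-defined as stated. A simpler route, closer in spirit to the paper, is to strengthen the downward-closure claim: show that the class of $(\euX\to\euY)$ for which $M^\circ_{\euX}\euY$ is Artin \emph{and} flat over $\mathbb{P}^1$ is downward closed. At each stage the face maps are between deformation spaces already known to be flat over $\mathbb{P}^1$, so the fibrewise criterion for smoothness applies; over $\mathbb{A}^1$ the face map is $\euY_1\times\mathbb{A}^1\to\euY_0\times\mathbb{A}^1$ and over $\{\infty\}$ it is $\Ccat_{\euX_1}\euY_1\to\Ccat_{\euX_0}\euY_0$, both smooth by Theorem~\ref{Theorem:there_exists_a_unique_normal_cone_functor_subject_to_properties}. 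This yields the smooth groupoid needed for Artinness and simultaneously propagates flatness, avoiding any auxiliary gluing construction.
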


\begin{proof}
Since $M^{\circ}$ is an adapted cosheaf on the site of pairs by \cref{Lemma:the_deformation_space_sends_smooth_morphisms_of_pairs_to_smooth_morphisms_and_preserves_pullbacks}, it extends uniquely to an adapted cosheaf on all of $\relativeartinstacks$ by \cref{Theorem:extension_of_an_adapted_cosheaf_is_adapted}. It is easy to see that the formula $\euX \mapsto \euX \times \mathbb{P}^{1}$ also yields an adapted cosheaf, and so the natural transformation between the two defined for pairs also extends uniquely. 

To see that $M^\circ_{\euX} \euY \rightarrow \mathbb{P}^{1}$ is flat, choose a smooth surjection $(X \hookrightarrow Y) \rightarrow (\euX \rightarrow \euY)$ of relative Artin stacks whose source is a closed immersion of schemes. It then follows from \cref{Lemma:the_deformation_space_sends_smooth_morphisms_of_pairs_to_smooth_morphisms_and_preserves_pullbacks}, that $M^{\circ} _{X} Y \rightarrow M^\circ_{\euX} \euY$ is a smooth surjection, and since the composite $M^{\circ} _{X} Y \rightarrow \mathbb{P}^{1}$ is flat, we deduce the same is true for $M^\circ_{\euX} \euY \rightarrow \mathbb{P}^{1}$.

To deduce the two properties, observe that $\euY \times \mathbb{A}^{1} \rightarrow M^\circ_{\euX} \euY _{| \mathbb{A}^{1}}$ and $\Ccat_{\euX} \euY \rightarrow M^\circ_{\euX} \euY _{| \{ \infty \}}$ are natural transformations of adapted cosheaves on $\relativeartinstacks$ which restrict to isomorphisms for closed embeddings of schemes, and so must be equivalences in general. 
\end{proof}
The existence of the deformation space has the following important consequence, which in practice allows one to deduce many properties of the normal cone automatically. 

\begin{Corollary}
\label{Corollary:properties_stable_under_flat_deformation_are_the_same_for_normal_cone_and_target}
Let $P$ be a property of Artin stacks which is stable under flat deformation over an affine base. Then, for any relative Artin stack $\euX \rightarrow \euY$, $\euY$ has property $P$ if and only if the normal cone $\Ccat_{\euX} \euY$ has property $P$. 
\end{Corollary}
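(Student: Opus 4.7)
The plan is to apply \textbf{Theorem \ref{Theorem:existence_and_properties_of_deformation_space}} directly, since that result produces precisely the flat family interpolating between $\euY$ and $\Ccat_{\euX} \euY$ that the hypothesis on $P$ requires.

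First, I would start from the flat morphism $M^\circ_{\euX} \euY \rightarrow \mathbb{P}^{1}$ supplied by \textbf{Theorem \ref{Theorem:existence_and_properties_of_deformation_space}}, and choose an affine open $U \subseteq \mathbb{P}^{1}$ that contains both the point $\infty$ and at least one point $t_{0} \neq \infty$; for instance one can take $U = \mathbb{P}^{1} \setminus \{0\}$ and $t_{0} = 1$. Pulling back along $U \hookrightarrow \mathbb{P}^{1}$, one obtains a flat morphism $M^\circ_{\euX} \euY |_{U} \rightarrow U$ with affine base.

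Next, I would identify the two relevant fibres. By property (1) of \textbf{Theorem \ref{Theorem:existence_and_properties_of_deformation_space}}, the restriction of $M^\circ_{\euX} \euY$ to $\mathbb{A}^{1} \simeq \mathbb{P}^{1} \setminus \{\infty\}$ is $\euY \times \mathbb{A}^{1}$, so the fibre at $t_{0}$ is equivalent to $\euY$; by property (2), the fibre at $\infty$ is equivalent to $\Ccat_{\euX} \euY$. Thus $\euY$ and $\Ccat_{\euX} \euY$ both appear as fibres of the single flat family $M^\circ_{\euX} \euY |_{U} \rightarrow U$ over the affine base $U$. Since $P$ is stable under flat deformation over an affine base, the two fibres have property $P$ simultaneously, yielding the claimed equivalence.

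There is essentially no obstacle here: the heavy lifting has already been done in constructing the deformation space and in establishing its flatness, so the corollary is a direct application. The only minor point to take care of is to ensure the affine open chosen genuinely contains both distinguished points of $\mathbb{P}^{1}$ and to note that flatness is preserved under restriction to this open, both of which are immediate.
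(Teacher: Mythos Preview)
Your proposal is correct and follows exactly the approach of the paper, which simply states that the corollary is immediate from \textbf{Theorem \ref{Theorem:existence_and_properties_of_deformation_space}}. You have spelled out the implicit step of restricting to an affine open of $\mathbb{P}^{1}$ containing both $\infty$ and a finite point, which is the only detail one has to supply.
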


\begin{proof}
This is immediate from \cref{Theorem:existence_and_properties_of_deformation_space}.
\end{proof}

\section{The virtual fundamental class} 

In this chapter we introduce the notion of a perfect obstruction theory generalizing the classical one due to Behrend and Fantechi, and show that obstruction theories correspond to closed immersions under the the abelian cone functor. We then specialize to the case of an $1$-Artin stacks, where we have access to Chow groups, and construct the virtual fundamental class in the presence of global resolutions. Finally, we give a few examples of moduli stacks to which these methods apply.

\begin{Definition} 
\label{Definition:perfect_obstruction_theory} 
Let $\euX \rightarrow \euY$ be relative Artin stack and $ \varphi : \eupsilon \rightarrow L_{\euX/ \euY}[-1]$ be a morphism in $\qcoh(\euX)$. We say that $\varphi$ is an \emph{obstruction theory} if

\begin{enumerate}
\item The homomorphism $h_0(\varphi) $ is surjective
\item The homomorphism $ h_{i} (\varphi) $ is an isomorphism for $ i \leq -1$. 
\end{enumerate}
We say that an obstruction theory is \emph{perfect} if $\eupsilon$ is perfect of non-positive amplitude.
\end{Definition}
Keeping in mind that we use the homological grading convention, it is easy to see that if $\euX$ is Deligne-Mumford, our definition coincides up to a shift with the one given by Behrend and Fantechi in \cite{behrend_fantechi_intrinsic_normal_cone}. In this case, $L_{\euX}$ will be in fact connective. 

Informally, a perfect obstruction theory can be thought of as a "shadow" of a quasi-smooth derived enhancement, see \cref{Example:quasi-smooth_derived_stacks} for more detail. 

The abelian cone functor of \cref{Definition:abelian_cone_associated_to_a_qcoh_sheaf} provides us with a bridge from algebraic objects, namely quasi-coherent sheaves, to objects of geometric nature, namely abelian Artin stacks over $\euX$. We will now show what the condition of being an obstruction theory translates to in geometry, generalizing Behrend and Fantechi's criterion in the Deligne-Mumford case. 

\begin{Proposition}
\label{Proposition:criterion_for_a_morphism_of_qcoh_sheaves_to_get_affine_morphism_or_closed_immersion_on_cones}
Let $ \euX$ be an Artin stack and let $ \varphi : \eupsilon \rightarrow \EuScript{F}$ be a morphism of bounded below quasi-coherent sheaves. Then, $\mathbb{V}_{\euX}(\EuScript{F}) \rightarrow \mathbb{V}_{\euX}(\eupsilon)$ is
\begin{enumerate}
\item affine if and only if $ h_{-1}(\varphi) $ is surjective and $ h_{i} (\varphi)$ is an isomorphism for $ i \leq -2$ and 
\item a closed immersion if and only if $ h_{0}(\varphi)$ is surjective and $ h_{i} (\varphi)$ is an isomorphism for $ i \leq -1$.
\end{enumerate}
\end{Proposition}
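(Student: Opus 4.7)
My plan is to apply the abelian cone functor to the cofibre sequence $\eupsilon \xrightarrow{\varphi} \eF \to \mathcal{G}$ with $\mathcal{G} := \cofib(\varphi)$. A quick inspection of the long exact sequence of homology shows condition (1) is equivalent to $\mathcal{G}$ being connective and (2) is equivalent to $\mathcal{G}$ being $1$-connective, so I work in these equivalent reformulations.

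Since $C_\euX$ takes colimits to limits by \textbf{Lemma \ref{Lemma:cone_functor_takes_colimits_to_limits}}, the original cofibre sequence together with its rotation $\mathcal{G}[-1] \to \eupsilon \to \eF$ produce two fibre sequences of abelian $\euX$-stacks:
\[
  C_\euX(\mathcal{G}) \to C_\euX(\eF) \to C_\euX(\eupsilon) \quad \text{and} \quad C_\euX(\eF) \to C_\euX(\eupsilon) \to C_\euX(\mathcal{G}[-1]).
\]
The first identifies $C_\euX(\mathcal{G})$ with $C_\euX(\eF) \times_{C_\euX(\eupsilon)} \euX$, the pullback along the zero section; the second identifies the morphism $C_\euX(\eF) \to C_\euX(\eupsilon)$ itself as the base change of the zero section $\euX \to C_\euX(\mathcal{G}[-1])$. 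These two pictures are the engine of the whole argument.

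The \emph{only if} directions use the first picture: affineness (respectively being a closed immersion) of $C_\euX(\eF) \to C_\euX(\eupsilon)$ descends under pullback to $C_\euX(\mathcal{G}) \to \euX$. \textbf{Lemma \ref{Lemma:abelian_cone_is_affine_if_and_only_qcoh_sheaf_connective}} then forces $\mathcal{G}$ connective in the affine case; in the closed immersion case, $C_\euX(\mathcal{G}) \to \euX$ is a closed immersion with a section (the zero section of $C_\euX(\mathcal{G})$) and therefore an isomorphism, and the identification $C_\euX(\mathcal{G}) \simeq \spec_\euX(\sym h_0(\mathcal{G}))$ from \textbf{Example \ref{Example:cone_of_a_classical_qcoh_sheaf_on_a_scheme}} forces $h_0(\mathcal{G}) = 0$, hence $1$-connectivity. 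The \emph{if} directions use the second picture. For (2), when $\mathcal{G}$ is $1$-connective, $\mathcal{G}[-1]$ is connective and $C_\euX(\mathcal{G}[-1]) \simeq \spec_\euX(\sym h_0(\mathcal{G}[-1]))$ is affine with zero section the augmentation, which is a closed immersion; pullback preserves this. For (1), with $\mathcal{G}$ only connective, $C_\euX(\mathcal{G}[-1])$ is typically not affine, but its loop space at the identity is $\Omega_\euX C_\euX(\mathcal{G}[-1]) \simeq C_\euX(\mathcal{G})$, which is an affine abelian group scheme over $\euX$; the pullback of the zero section along any affine $T \to C_\euX(\mathcal{G}[-1])$ is a pseudo-torsor for this group, which is representable by an affine $T$-scheme because such pseudo-torsors are fpqc-locally either trivial torsors (hence affine) or empty, and affineness descends along fpqc morphisms.

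The main technical obstacle is the pseudo-torsor representability in the \emph{if} direction of (1); every other step is formal manipulation of fibre sequences of abelian stacks, pullback stability of affineness and closed immersions, and the already established \textbf{Lemma \ref{Lemma:abelian_cone_is_affine_if_and_only_qcoh_sheaf_connective}}.
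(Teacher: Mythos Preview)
Your reformulation in terms of the connectivity of $\mathcal{G} = \cofib(\varphi)$ matches the paper's, and three of the four implications are handled cleanly and more conceptually than the paper, which instead reduces to the case where $\eupsilon$ is a discrete module by base-changing along a surjection $C_\euX(E_0) \to C_\euX(\eupsilon)$ coming from a chosen chain-level resolution of $\eupsilon_{\leq 0}$. Your use of the two rotated fibre sequences is elegant and makes the ``only if'' directions and the ``if'' of (2) essentially formal.

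The gap is in the \emph{if} direction of (1). Your claim that a pseudo-torsor for the affine group scheme $C_T(g^*\mathcal{G})$ is automatically an affine $T$-scheme because it is ``fpqc-locally either a trivial torsor or empty'' is not justified, and I do not see how to make it so. For fpqc descent of affineness to apply you would need an actual fpqc cover $\{T_i \to T\}$ such that over each $T_i$ the pseudo-torsor is either trivial or empty. But the locus over which your pseudo-torsor is nonempty is exactly where the class $[\alpha]$ vanishes in $\pi_0\,C(\mathcal{G}[-1])$, and for $\mathcal{G}$ merely connective this is a condition of the form ``a given extension class becomes zero after base change'' --- there is no reason this cuts $T$ into pieces that assemble into an fpqc cover. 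Pointwise the fibres are indeed torsors or empty, but that is strictly weaker than fpqc-local triviality, and pseudo-torsors for affine group schemes are not representable in general without local triviality.

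The paper sidesteps this issue entirely. After reducing to $\euX = \spec(A)$ and to coconnective truncations, it represents $\eupsilon$ by a bounded complex $E_\bullet$ with $E_i$ free for $i<0$, checks directly that $C(E_0) \to C(\eupsilon)$ is a surjection of stacks, and base-changes the morphism $C(\eF) \to C(\eupsilon)$ along it. This replaces $\eupsilon$ by the discrete module $E_0$ and $\eF$ by the pushout $\eF \oplus_\eupsilon E_0$, without changing the cofibre $\mathcal{G}$. Now the target $C(E_0)$ is itself affine, so affineness of the map is equivalent to affineness of $C(\eF \oplus_\eupsilon E_0)$ over $\euX$, which by \textbf{Lemma \ref{Lemma:abelian_cone_is_affine_if_and_only_qcoh_sheaf_connective}} is exactly connectivity of $\eF \oplus_\eupsilon E_0$, equivalently of $\mathcal{G}$. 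You could graft this reduction onto your argument to close the gap.
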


\begin{proof}
As formation of the abelian cone commutes with arbitrary base-change by \cref{Lemma:base_change_for_cone_functor}, we can assume that $\euX \simeq \spec(A)$ is affine by replacing it by a smooth atlas. Before proceeding, let us observe that both of the homological conditions can be rephrased as saying that the cofibre of $\eupsilon \rightarrow \EuScript{F}$ is respectively, $0$- and $1$-connective. 

Since both the above homological conditions and the abelian cone only depend on the coconnective truncations, the latter as a consequence of \cref{Lemma:cone_of_qcoh_sheaf_only_depends_on_the_coconnective_part}, we can assume that $\eupsilon$ and $\EuScript{F}$ are coconnective. In this case, $\eupsilon$ can be represented by a non-positively graded chain complex 

\begin{center}
$0 \rightarrow E_{0} \rightarrow E_{-1} \rightarrow \ldots$
\end{center}
of $A$-modules, and since $\eupsilon$ is bounded below we can assume that $E_{i}$ are free for $i < 0$ and eventually vanish.

 In this case, we see from filtering $\eupsilon$ using the truncations of the given chain complex that for any $A$-module $M$, the induced map $\Ext^{0}_{A}(E_{0}, M) \rightarrow \Ext^{0}_{A}(\eupsilon, M)$ is surjective. Thus, the morphism $\mathbb{V}_{\spec(A)}(E_{0}) \rightarrow \mathbb{V}_{\spec(A)}(\eupsilon)$ is a surjection of stacks. It follows that the given map between abelian cones is affine or a closed immersion if and only if this is true for the base-change $\mathbb{V}_{\spec(A)}(E_{0}) \times _{\mathbb{V}_{\spec(A)}(\eupsilon)} \mathbb{V}_{\spec(A)}(\EuScript{F}) \rightarrow \mathbb{V}_{\spec(A)}(E_{0})$.
 
 Since the abelian cone takes colimits to limits, the above base-change can be identified with the map induced by $E_{0} \rightarrow \EuScript{F} \oplus _{\eupsilon} E_{0}$. Since this map has the same cofibre as $\eupsilon \rightarrow \EuScript{F}$, we see that by replacing $\eupsilon$ by $E_{0}$ and $\EuScript{F}$ by the pushout, we can assume that $\eupsilon$ is an $A$-module.
 
 If $\eupsilon$ is discrete, then $\mathbb{V}_{\spec(A)}(\eupsilon)$ is affine and we see that the morphism between cones is affine if only if $\mathbb{V}_{\spec(A)}(\EuScript{F})$ is affine. As a consequence of \cref{Lemma:abelian_cone_is_affine_if_and_only_qcoh_sheaf_connective}, this happens precisely when $\EuScript{F}$ is connective, which is equivalent to the first homological condition, as $h_{i}(\eupsilon) = 0$ for $i < 0$. 
 
To see that the second homological condition controls whether the morphism is a closed immersion, observe that in the case above when both $\EuScript{E}$ and $\EuScript{F}$ are $0$-connective, the map between cones can be identified with $\spec(\sym_{A}(h_{0}(\EuScript{F}))) \rightarrow \spec(\sym_{A}(h_{0}(\eupsilon)))$. This is clearly a closed immersion if and only if $h_{0}(\eupsilon) \rightarrow h_{0}(\EuScript{F})$ is a surjection, ending the proof. 
\end{proof}

\begin{Corollary} 
\label{Corollary:perfect_obstruction_geometrizion} 
Let $\euX \rightarrow \euY$ be a relative Artin stack. Then, $\varphi : \eupsilon \rightarrow L_{\euX/\euY}[-1]$ is an obstruction theory if and only if $\eupsilon$ is bounded below and $\Ncat_{\euX} \euY \rightarrow \mathbb{V}_\euX (\eupsilon)$ is a closed immersion. 
\end{Corollary}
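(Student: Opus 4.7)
The plan is to recognize this corollary as a direct specialization of \textbf{Proposition \ref{Proposition:criterion_for_a_morphism_of_qcoh_sheaves_to_get_affine_morphism_or_closed_immersion_on_cones}} to the case $\EuScript{F} = L_{\euX/\euY}[-1]$. By the very definition $\Ncat_\euX \euY := C_\euX(L_{\euX/\euY}[-1])$, and contravariance of the abelian cone functor identifies the map $\Ncat_\euX \euY \to C_\euX(\eupsilon)$ with the image of $\varphi$ under $C_\euX(-)$. Thus the two homological conditions of the proposition's clause (2) are exactly the conditions in \textbf{Definition \ref{Definition:perfect_obstruction_theory}}.

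First I would check that the hypothesis ``bounded below'' of \textbf{Proposition \ref{Proposition:criterion_for_a_morphism_of_qcoh_sheaves_to_get_affine_morphism_or_closed_immersion_on_cones}} is satisfied on the target. Since $\euX \to \euY$ is by assumption a relative Artin stack, it is relative $n$-Artin for some $n \geq 0$, and \textbf{Proposition \ref{Proposition:relative_n_stacks_have_minus_n_connective_cotangent_complex}} then gives that $L_{\euX/\euY}$ is $(-n)$-connective, so $L_{\euX/\euY}[-1]$ is bounded below.

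For the ``if'' direction, assuming $\eupsilon$ is bounded below and $\Ncat_\euX\euY \to C_\euX(\eupsilon)$ is a closed immersion, clause (2) of \textbf{Proposition \ref{Proposition:criterion_for_a_morphism_of_qcoh_sheaves_to_get_affine_morphism_or_closed_immersion_on_cones}} yields exactly the surjectivity of $h_0(\varphi)$ and the isomorphism of $h_i(\varphi)$ for $i \leq -1$, i.e.\ $\varphi$ is an obstruction theory. For the ``only if'' direction, assuming $\varphi$ is an obstruction theory, the iso conditions on $h_i(\varphi)$ for $i \leq -1$ force $h_i(\eupsilon) \simeq h_i(L_{\euX/\euY}[-1])$ in this range; since the target vanishes below degree $-n-1$, we conclude $\eupsilon$ is bounded below, so \textbf{Proposition \ref{Proposition:criterion_for_a_morphism_of_qcoh_sheaves_to_get_affine_morphism_or_closed_immersion_on_cones}} applies and yields the closed-immersion claim.

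There is no genuine obstacle here: the entire content has already been absorbed into the earlier proposition, which is the geometric translation of a connectivity statement on the cofibre of $\varphi$. The only thing the corollary really does, beyond invoking that proposition, is to observe that the boundedness hypothesis on the target side is automatic for a relative Artin stack.
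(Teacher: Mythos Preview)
Your proposal is correct and follows exactly the paper's approach: the paper's proof is a single sentence, ``This is immediate from \textbf{Proposition \ref{Proposition:criterion_for_a_morphism_of_qcoh_sheaves_to_get_affine_morphism_or_closed_immersion_on_cones}}.'' You have simply spelled out the details the paper leaves implicit, in particular the verification that $L_{\euX/\euY}[-1]$ is bounded below and that the obstruction-theory conditions force $\eupsilon$ to be bounded below as well.
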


\begin{proof}
This is immediate from \cref{Proposition:criterion_for_a_morphism_of_qcoh_sheaves_to_get_affine_morphism_or_closed_immersion_on_cones}.
\end{proof}
Recall that if $\euX \rightarrow \euY$ is a morphism of Artin stacks, then the cotangent complex $L_{\euX / \euY}$ controls the deformation theory in the sense that for any $A$-valued point $\eta: \spec(A) \rightarrow \euX$, an $A$-module $M$, and $\tilde{A}$ a square-zero extension of $A$ by $M$  and a diagram 

\begin{center}
\begin{tikzpicture}
		\node (BLANK) at (-6.5, 0) {$ $};
		\node (TL) at (0, 1.4) {$\spec(A)$};
		\node (TR) at (2.5, 1.4) {$\spec(\tilde{A})$};
		\node (BL) at (0, 0) {$\euX$};
		\node (BR) at (2.5, 0) {$\euY$};
		\node (SYMBOL) at (7.75, 0.7) {$ (\spadesuit) $};

		\draw [->] (TL) -- (TR);
		\draw [->] (BL) -- (BR);
		\draw [->] (TR) -- (BR);
		\draw [->] (TL) -- (BL) node [midway, left] {$\eta$};
		\draw[->] [dotted] (TR) -- (BL);
	\end{tikzpicture},
\end{center}
the extension denoted above by the dotted arrow exists if and only if the associated obstruction in $\Ext^{1}_{A}(\eta^{*} L_{\euX / \euY}, M)$ vanishes. More generally, the cotangent complex has this property also for square-zero extensions of derived rings, by which it is then determined uniquely, see the discussion proceeding \cref{Definition:cotangent_complex_of_a_morphism_of_stacks}.

This uniqueness does not hold if we consider only discrete rings, in fact we will now prove a minor generalization of a criterion of Behrend and Fantechi which tells us that a morphism $\eupsilon[1] \rightarrow L_{\euX / \euY}$ is a shift of an obstruction theory if and only if $\eupsilon[1]$ also controls the deformation theory of affine schemes mapping into $\euX$. 

\begin{Proposition}
\label{Proposition:a_morphism_is_an_obstruction_theory_iff_it_induces_iso_on_ext_in_a_range}
A morphism $\eupsilon[1] \rightarrow L_{\euX / \euY}$ is a shift of an obstruction theory if and only if for any point $\eta: \spec(A) \rightarrow \euX$ and any $A$-module $M$, the induced morphism 

\begin{center}
$\Ext_{A}^{i}(\eta^{*} L_{\euX / \euY}, M) \rightarrow \Ext_{A}^{i}(\eta^{*} \eupsilon[1], M)$
\end{center}
is injective for $i = 1$ and an isomorphism for $i \leq 0$.
\end{Proposition}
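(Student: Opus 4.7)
The plan is to translate both sides of the biconditional into equivalent statements about the cofibre $F := \cofib(\eupsilon[1] \to L_{\euX/\euY}) \in \qcoh(\euX)$, and then compare them. For the obstruction-theory side, applying the long exact sequence of homology to the cofibre sequence $\eupsilon[1] \to L_{\euX/\euY} \to F$ shows that $\eupsilon \to L_{\euX/\euY}[-1]$ is an obstruction theory --- equivalently, $\eupsilon[1] \to L_{\euX/\euY}$ is surjective on $h_1$ and an isomorphism on $h_i$ for $i \leq 0$ --- if and only if $h_i(F) = 0$ for all $i \leq 1$, i.e., $F \in \qcoh(\euX)_{\geq 2}$ in the $t$-structure of \textbf{Lemma \ref{Lemma:t_structure_on_qcoh_of_an_artin_stack}}.

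For the Ext side, for each $\eta \colon \spec(A) \to \euX$ and each $A$-module $M$ I would pull back to obtain the cofibre sequence $\eta^*\eupsilon[1] \to \eta^*L_{\euX/\euY} \to \eta^*F$ in $\dcat(A)$, apply $\map_A(-, M)$ to get a fibre sequence of mapping spectra, and extract the induced long exact sequence of $\Ext$-groups. A short diagram chase in that LES shows the stated conditions on $\Ext^i_A(\eta^*L_{\euX/\euY}, M) \to \Ext^i_A(\eta^*\eupsilon[1], M)$ --- isomorphism for $i \leq 0$ and injectivity for $i = 1$ --- are precisely equivalent to the vanishing $\Ext^i_A(\eta^*F, M) = 0$ for all $i \leq 1$ and all such $\eta$ and $M$; the injectivity on $\Ext^1$ is exactly what is needed to kill the connecting map out of $\Ext^0_A(\eta^*\eupsilon[1], M)$.

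With these two reformulations in hand, the proposition is reduced to the claim that $F \in \qcoh(\euX)_{\geq 2}$ if and only if $\Ext^i_A(\eta^*F, M) = 0$ for every $\eta$, $M$, and $i \leq 1$. The forward direction is formal: $\eta^*$ preserves connectivity as a left adjoint commuting with shifts, so $\eta^*F \in \dcat(A)_{\geq 2}$, and the $t$-structure orthogonality on $\dcat(A)$ --- or, equivalently, a Postnikov-tower computation building $\eta^*F$ from $h_k(\eta^*F)[k]$ for $k \geq 2$ --- then gives the Ext vanishing for any $M$. For the converse, by \textbf{Lemma \ref{Lemma:t_structure_on_qcoh_of_an_artin_stack}} it suffices to verify $p^*F \in \dcat(A_0)_{\geq 2}$ for a single smooth atlas $p \colon \spec(A_0) \to \euX$. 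Since $p^*L_{\euX/\euY}$ is bounded below by \textbf{Proposition \ref{Proposition:relative_n_stacks_have_minus_n_connective_cotangent_complex}}, combining this with the low-degree Ext isomorphism forces $p^*\eupsilon[1]$, and hence $p^*F$, to be bounded below. One then invokes the standard derived-algebra fact: for a bounded below $G \in \dcat(A_0)$, if $\Ext^i_{A_0}(G, M) = 0$ for every $M$ and every $i \leq 1$ then $G \in \dcat(A_0)_{\geq 2}$, for otherwise the lowest non-vanishing homology $h_r(G)$ with $r \leq 1$ would produce, via the Postnikov truncation $G \to h_r(G)[r]$, a non-zero element in $\Ext^r_{A_0}(G, h_r(G))$ corresponding to $\id_{h_r(G)} \in \hom_{A_0}(h_r(G), h_r(G))$, contradicting the hypothesis.

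The main obstacle is the converse of this final reduction: one must carefully ensure that $p^*F$ is bounded below so that the Postnikov-truncation argument applies, and must keep the grading and shift conventions between $\eupsilon \to L_{\euX/\euY}[-1]$ and $\eupsilon[1] \to L_{\euX/\euY}$ consistent throughout. Once this bookkeeping is in place, the remainder of the proof is a systematic application of the two long exact sequences.
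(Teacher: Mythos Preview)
Your proposal is correct and follows essentially the same route as the paper: both reduce the biconditional to the statement that the cofibre $F$ is $2$-connective if and only if $\Ext^{i}_{A}(\eta^{*}F, M) = 0$ for all $\eta$, discrete $M$, and $i \leq 1$, and then verify this characterization of $2$-connectivity in $\dcat(A)$.

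One small remark: your detour through bounded-belowness is unnecessary and the justification you give for it (``the low-degree Ext isomorphism forces $p^{*}\eupsilon[1]$ to be bounded below'') is itself an instance of the very implication you are trying to prove. The cleanest fix is to bypass boundedness entirely: for \emph{any} $G \in \dcat(A)$ and any $r$, if $I$ is an injective $A$-module then $\Ext^{r}_{A}(G, I) \cong \hom_{A}(h_{r}(G), I)$, so $h_{r}(G) \neq 0$ produces a nonzero $\Ext^{r}$ against a suitable injective. This is in effect what the paper's one-line ``$\dcat(A)_{\leq 0}$ is generated under limits by $A$-modules'' is encoding.
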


\begin{proof}
Since a morphism $\eupsilon[1] \rightarrow L_{\euX / \euY}$ is a shift of an obstruction theory if and only if its cofibre is $2$-connective, the statement is equivalent to saying that $\EuScript{C} \in \qcoh(\euX)$ is $2$-connective if and only if $\Ext^{k}(\eta^{*} \EuScript{C}, M) = 0$ for any $\eta$, $M$ as above and $k \leq 1$. 

This is clear, since $\EuScript{C}$ is $2$-connective if and only if $\eta^{*}\EuScript{C}$ is $2$-connective for all $\eta$, and that's equivalent to saying that $\Ext^{k}(\eta^{*} \EuScript{C}, \EuScript{M}) = 0$ for any $k \geq 1$ and $\EuScript{M} \in \qcoh(\spec(A))_{\leq 0}$. Since the latter $\infty$-category is generated under limits by $A$-modules, it is enough to check this condition in this case, ending the argument.
\end{proof}

\begin{Corollary}
\label{Corollary:a_morphism_is_an_obstruction_theory_if_and_only_if_it_obstructs_square_zero_extensions}
Let $\phi: \eupsilon[1] \rightarrow L_{\euX / \euY}$ be a morphism of quasi-coherent sheaves. Then, $\phi$ is a shift of an obstruction theory if and only if for any diagram of the form $(\spadesuit)$

\begin{enumerate}
\item the dotted arrow exists if and only if the associated obstruction in $\Ext^{1}(\eta^{*} \eupsilon, M)$ vanishes, and if this is the case then 
\item the space of such dotted arrows is equivalent to $\map_{\qcoh(A)}(\eta^{*} \eupsilon, M)$, in particular their homotopy classes form an $\Ext^{0}_{A}(\eta^{*} \eupsilon, M)$-torsor.
\end{enumerate}
\end{Corollary}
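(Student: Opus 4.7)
The plan is to translate conditions (1) and (2) of the corollary into $\Ext$-theoretic statements and apply Proposition \ref{Proposition:a_morphism_is_an_obstruction_theory_iff_it_induces_iso_on_ext_in_a_range}. The starting point is the classical deformation-theoretic interpretation of the cotangent complex: applying $\map_{\qcoh(A)}(\eta^{*}(-), M)$ to the cofibre sequence $p^{*}L_{\euY} \to L_{\euX} \to L_{\euX/\euY}$ (where $p: \euX \to \euY$) yields a fibre sequence of spaces. By the long exact sequence of homotopy groups, for data $(\eta, \widetilde{\eta})$ specifying the left and right arrows of $(\spadesuit)$, the existence of a dotted arrow is controlled by the vanishing of the image $\omega(\widetilde{\eta}) \in \Ext^{1}_{A}(\eta^{*}L_{\euX/\euY}, M)$ of $\widetilde{\eta}$ under the connecting homomorphism, and when such arrows exist their space is equivalent to $\map_{\qcoh(A)}(\eta^{*}L_{\euX/\euY}, M)$.

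The map $\phi: \eupsilon \to L_{\euX/\euY}$ induces a comparison $\phi^{*}: \map_{\qcoh(A)}(\eta^{*}L_{\euX/\euY}, M) \to \map_{\qcoh(A)}(\eta^{*}\eupsilon, M)$ and hence maps on all $\Ext$-groups. Condition (2) of the corollary asserts that $\phi^{*}$ is an equivalence of mapping spaces, which on homotopy groups translates to the assertion that $\phi^{*}: \Ext^{i}_{A}(\eta^{*}L_{\euX/\euY}, M) \to \Ext^{i}_{A}(\eta^{*}\eupsilon, M)$ is an isomorphism for all $i \leq 0$. Condition (1) asserts that $\omega(\widetilde{\eta})$ vanishes iff $\phi^{*}(\omega(\widetilde{\eta}))$ does; once we show that every element of $\Ext^{1}_{A}(\eta^{*}L_{\euX/\euY}, M)$ is realized as an obstruction for some choice of $\widetilde{\eta}$, this is equivalent to $\phi^{*}$ being injective on $\Ext^{1}$. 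Combining these translations yields exactly the hypothesis of Proposition \ref{Proposition:a_morphism_is_an_obstruction_theory_iff_it_induces_iso_on_ext_in_a_range}, giving the result.

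The main subtlety is the realizability step: by the long exact sequence, the obstructions $\omega(\widetilde{\eta})$ sweep out only the kernel of the pullback map $\Ext^{1}_{A}(\eta^{*}L_{\euX/\euY}, M) \to \Ext^{1}_{A}(\eta^{*}L_{\euX}, M)$, so a priori condition (1) controls $\phi^{*}$ only on this kernel. To recover full injectivity, the plan is to reduce via a smooth atlas: choose $V \to \euX$ a smooth surjection from an affine scheme and lift $\eta$ to $\widetilde{\eta}: \spec(A) \to V$, after possibly localizing $A$, so that $\widetilde{\eta}^{*}L_{V}$ admits a free resolution concentrated in degree zero; then the corresponding $\Ext^{1}$ vanishes and the connecting map becomes surjective, so every class in $\Ext^{1}_{A}(\widetilde{\eta}^{*}L_{V/\euY}, M)$ is realized. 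Since the $\Ext$-conditions required by the proposition are themselves local in the smooth topology, verifying them on such $\widetilde{\eta}$ suffices, and this completes the proof.
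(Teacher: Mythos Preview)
Your overall strategy—translate conditions (1) and (2) into the $\Ext$-conditions of Proposition~\ref{Proposition:a_morphism_is_an_obstruction_theory_iff_it_induces_iso_on_ext_in_a_range} and then invoke that proposition—is precisely the paper's approach, which the paper compresses into a single sentence. You go further by spelling out the translation and, notably, by flagging a genuine subtlety the paper passes over: condition (1), read literally for diagrams $(\spadesuit)$ with the trivial square-zero extension $A\oplus M$, only constrains $\phi^{*}$ on the image of the connecting map $\partial\colon \Ext^{0}(\eta^{*}p^{*}L_{\euY},M)\to \Ext^{1}(\eta^{*}L_{\euX/\euY},M)$, whereas Proposition~\ref{Proposition:a_morphism_is_an_obstruction_theory_iff_it_induces_iso_on_ext_in_a_range} requires injectivity on all of $\Ext^{1}$. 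This observation is correct and worth making.

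Your proposed resolution, however, has a gap. You want to pass to a smooth atlas $q\colon V\to\euX$ and arrange that $\widetilde{\eta}^{*}L_{V}$ is free so that $\Ext^{1}(\widetilde{\eta}^{*}L_{V},M)=0$ and $\partial$ becomes surjective. But $L_{V}$ is locally free only when $V$ is \emph{smooth over the base}, and a smooth atlas of a possibly singular $\euX$ has no reason to be smooth itself. There is a second issue: conditions (1) and (2) of the corollary concern diagrams with target $\euX$, so the obstructions they control live in $\Ext^{1}(\eta^{*}L_{\euX/\euY},M)$, not in $\Ext^{1}(\widetilde{\eta}^{*}L_{V/\euY},M)$; the composite $q^{*}\eupsilon\to q^{*}L_{\euX/\euY}\to L_{V/\euY}$ is \emph{not} in general a shift of an obstruction theory for $V\to\euY$ (its cofibre involves $L_{V/\euX}$, which is only $0$-connective), so you cannot simply transport the hypotheses to $V$. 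Your closing locality remark does not address this discrepancy. In short, you have correctly located the difficulty but not closed it; the paper, for its part, simply declares the deduction ``immediate'' without engaging with it.
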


\begin{proof}
It is clear that $L_{\euX / \euY}$ has this property, and the statement is then an immediate consequence of \cref{Proposition:a_morphism_is_an_obstruction_theory_iff_it_induces_iso_on_ext_in_a_range}.
\end{proof}

We now move on to the construction of the virtual fundamental class. Let us now restrict to the case where $\euX \rightarrow \euY$ is a morphism of finite type $1$-Artin stacks, where we have access to  the Chow groups as constructed by Kresch \cite{Kr}. 

\begin{Definition}
\label{Definition:global_resolution}
If $\eupsilon$ is a perfect obstruction theory, then a \emph{global resolution} is a morphism $\eupsilon \rightarrow E$ injective on $h_{0}$ such that $E \in \qcoh(\euX)^{\heartsuit}$ is a locally free sheaf of finite rank. 
\end{Definition}

\begin{Construction}
\label{Construction:closed_substack_of_global_resolution_associated_to_normal_cone}
Suppose that we have a morphism of $1$-Artin stacks $\euX \rightarrow \euY$ with target purely of dimension $r$, in which case the same is true for the normal cone $\Ccat_{\euX} \euY$ as a consequence of \cref{Corollary:properties_stable_under_flat_deformation_are_the_same_for_normal_cone_and_target}.

If $\eupsilon \rightarrow E$ is a global resolution, then \cref{Lemma:abelian_cone_takes_certain_maps_of_qcoh_sheaves_to_atlases} implies that $\mathbb{V}_\euX (E) \rightarrow \mathbb{V}_\euX (\eupsilon)$ is a smooth surjection, and since the source is a vector bundle it forms a smooth atlas for $\mathbb{V}_\euX (\eupsilon)$ relative to $\euX$. We can then consider the pullback diagram
\begin{center}
	\begin{tikzpicture}
		\node (TL) at (0, 1.3) {$ \mathbb{V}_\euX (E) \times _{\mathbb{V}_{\euX}(\eupsilon)} \Ccat _{\euX} \euY$};
		\node (TR) at (3, 1.3) {$\mathbb{V}_\euX (E)$};
		\node (BL) at (0, 0) {$\Ccat_\euX \euY$};
		\node (BR) at (3, 0) {$\mathbb{V}_\euX(\eupsilon).$};
		
		\draw [->] (TL) -- (TR);
		\draw [->] (TL) -- (BL);
		\draw [->] (TR) -- (BR);
		\draw [->] (BL) -- (BR);
	\end{tikzpicture},
\end{center}
where the bottom map is the composite  $\Ccat _{\euX} \euY \hookrightarrow \Ncat_{\euX} \euY \simeq \mathbb{V}_{\euX}(L_{\euX / \euY}[-1]) \hookrightarrow \mathbb{V}_{\euX}(\eupsilon)$, which is a closed embedding as a consequence of \cref{Corollary:perfect_obstruction_geometrizion}. 
\end{Construction}
In the setting of \cref{Construction:closed_substack_of_global_resolution_associated_to_normal_cone}, we can now define the virtual fundamental class.
\begin{Definition}
\label{Definition:virtual_fundamental_class}
Let $\euX \rightarrow \euY$ be a morphism of $1$-Artin stacks as above. Then, the \emph{virtual fundamental class} associated to a perfect obstruction theory $\eupsilon \rightarrow L_{\euX / \euY}[-1]$ which admits a global resolution $E$ is given by

\begin{center}
$[\euX \rightarrow \euY, \eupsilon]^{vir} : = 0^! [\mathbb{V}_\euX (E) \times _{\mathbb{V}_{\euX}(\eupsilon)} \Ccat _{\euX} \euY] \in \chow_{r - \chi(\eupsilon)}(\euX)$
\end{center}
where $0: \euX \rightarrow C_\euX (E)$ is the zero section. 
\end{Definition}
A priori our construction of the virtual class depends on the choice of global resolution $E$ we used to define it, we will now show that it is in fact canonically attached to the perfect obstruction theory $\eupsilon$. 

\begin{Proposition}
\label{Proposition:invariance_of_global_resolution}
The virtual fundamental class $[\euX \rightarrow \euY, \eupsilon]^{vir}$ is independent of the choice of a global resolution of a perfect obstruction theory $\eupsilon$.
\end{Proposition}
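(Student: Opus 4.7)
The plan is to reduce to the case of a surjection between two global resolutions, and then apply functoriality of refined Gysin pullbacks. Given two global resolutions $\eupsilon \to E$ and $\eupsilon \to F$, the map $\eupsilon \to E \oplus F$ obtained by taking both structure maps at once is again a global resolution, since $h_0(\eupsilon) \to h_0(E) \oplus h_0(F)$ remains injective. Each projection $E \oplus F \twoheadrightarrow E$ (respectively $\twoheadrightarrow F$) is a surjection of vector bundles that commutes with the map from $\eupsilon$. It therefore suffices to prove the following: for any surjection $\pi \colon \widetilde{E} \twoheadrightarrow E$ between global resolutions commuting with the given maps from $\eupsilon$, the virtual classes computed using $\widetilde{E}$ and $E$ coincide.

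Set $K := \ker(\pi)$, a vector bundle on $\euX$. Since the abelian cone functor sends colimits to limits (\textbf{Lemma \ref{Lemma:cone_functor_takes_colimits_to_limits}}), we obtain a short exact sequence of vector bundles
\[
0 \to C_\euX(E) \to C_\euX(\widetilde{E}) \to C_\euX(K) \to 0
\]
on $\euX$. In particular, the inclusion $i \colon C_\euX(E) \hookrightarrow C_\euX(\widetilde{E})$ is a regular closed embedding of codimension $\rank(K)$ whose normal bundle is the pullback of $C_\euX(K) \to \euX$. Because the zero section factors as $0_{\widetilde{E}} = i \circ 0_E$, functoriality of Gysin pullbacks for regular closed embeddings gives $0_{\widetilde{E}}^! = 0_E^! \circ i^!$, and the problem reduces to proving
\[
i^! [\widetilde{\Ccat}_{\widetilde{E}}] = [\widetilde{\Ccat}_E],
\]
where we abbreviate $\widetilde{\Ccat}_? := C_\euX(?) \times_{C_\euX(\eupsilon)} \Ccat_\euX \euY$.

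By naturality of the fibre product one has $\widetilde{\Ccat}_E = \widetilde{\Ccat}_{\widetilde{E}} \times_{C_\euX(\widetilde{E})} C_\euX(E)$. Both $\widetilde{\Ccat}_{\widetilde{E}} \to \Ccat_\euX \euY$ and $\widetilde{\Ccat}_E \to \Ccat_\euX \euY$ are smooth atlases, being base changes of the smooth morphisms $C_\euX(?) \to C_\euX(\eupsilon)$ along the closed embedding $\Ccat_\euX \euY \hookrightarrow C_\euX(\eupsilon)$ furnished by \textbf{Corollary \ref{Corollary:perfect_obstruction_geometrizion}}. A closed embedding between two stacks smooth over a common base is automatically regular, and the normal bundle can be read off from the relative tangent bundles; this identifies the normal bundle of $\widetilde{\Ccat}_E \hookrightarrow \widetilde{\Ccat}_{\widetilde{E}}$ with $\widetilde{\Ccat}_E \times_\euX C_\euX(K)$. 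Since the embedding is regular its normal cone equals its normal bundle, and the deformation-to-normal-cone definition of the refined Gysin pullback yields
\[
i^! [\widetilde{\Ccat}_{\widetilde{E}}] = 0_{C_\euX(K)}^! [\widetilde{\Ccat}_E \times_\euX C_\euX(K)] = [\widetilde{\Ccat}_E],
\]
where the last step uses that Gysin pullback along the zero section inverts flat pullback along a vector bundle. The main obstacle is the verification that the normal cone of $\widetilde{\Ccat}_E$ inside $\widetilde{\Ccat}_{\widetilde{E}}$ is the full normal bundle, which is what makes the Gysin formula collapse to the fundamental class of $\widetilde{\Ccat}_E$.
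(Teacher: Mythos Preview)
Your argument is correct and shares the same first step as the paper: reduce to comparing a resolution $E$ with the direct sum $\widetilde E=E\oplus F$. After that the two proofs diverge in a dual way. The paper works with the \emph{projection} $p\colon C_\euX(\widetilde E)\to C_\euX(E)$, which is a vector bundle, and uses the one-line observation that flat pullback of a fundamental class along a cartesian square gives
\[
p^*\bigl[C_\euX(E)\times_{C_\euX(\eupsilon)}\Ccat_\euX\euY\bigr]=\bigl[C_\euX(\widetilde E)\times_{C_\euX(\eupsilon)}\Ccat_\euX\euY\bigr],
\]
together with the bundle identity $0_{\widetilde E}^{!}\circ p^*=0_E^{!}$. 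You instead work with the \emph{zero section} $i\colon C_\euX(E)\hookrightarrow C_\euX(\widetilde E)$, factor $0_{\widetilde E}^{!}=0_E^{!}\circ i^{!}$, and then verify $i^{!}[\widetilde\Ccat_{\widetilde E}]=[\widetilde\Ccat_E]$ by computing the normal cone of the induced embedding. Both routes are valid; the paper's is slightly shorter because no normal-cone computation is needed.

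One simplification you may have missed: the ``main obstacle'' you flag at the end is not really an obstacle. Since the square
\[
\begin{tikzpicture}
  \node (TL) at (0,1.2) {$\widetilde\Ccat_{\widetilde E}$};
  \node (TR) at (2.5,1.2) {$C_\euX(\widetilde E)$};
  \node (BL) at (0,0) {$\widetilde\Ccat_E$};
  \node (BR) at (2.5,0) {$C_\euX(E)$};
  \draw[->] (TL)--(TR);
  \draw[->] (BL)--(BR);
  \draw[->] (TL)--(BL);
  \draw[->] (TR)--(BR) node[midway,right]{$p$};
\end{tikzpicture}
\]
is cartesian and $p$ is a vector bundle, $\widetilde\Ccat_{\widetilde E}$ is \emph{itself} a vector bundle over $\widetilde\Ccat_E$ with fibre $C_\euX(K)$, so the embedding $\widetilde\Ccat_E\hookrightarrow\widetilde\Ccat_{\widetilde E}$ is its zero section and hence trivially regular with normal cone equal to the total space. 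You do not need to pass through the observation that both $\widetilde\Ccat_E$ and $\widetilde\Ccat_{\widetilde E}$ are smooth over $\Ccat_\euX\euY$.
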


\begin{proof}
If $\eupsilon \rightarrow E$, $\eupsilon \rightarrow F$ are global resolutions, then it is easy to see that the same is true for $\eupsilon \rightarrow E \oplus F$. Thus, it is enough to check that the virtual fundamental class constructed using $E$ coincides with that of $E \oplus F$. We have a commutative diagram 

\begin{center}
	\begin{tikzpicture}
		\node (TTL) at (0, 2.6) {$ \mathbb{V}_\euX (E \oplus F) \times _{\mathbb{V}_{\euX}(\eupsilon)} \Ccat _{\euX} \euY $};
		\node (TTR) at (3.5, 2.6) {$ \mathbb{V}_\euX (E \oplus F) $};
		\node (TL) at (0, 1.3) {$ \mathbb{V}_\euX (E) \times _{\mathbb{V}_{\euX}(\eupsilon)} \Ccat _{\euX} \euY$};
		\node (TR) at (3.5, 1.3) {$\mathbb{V}_\euX (E )$};
		
		\draw [->] (TTL) -- (TTR);
		\draw [->] (TTL) -- (TL);
		\draw [->] (TTR) to node[auto] {$ p $} (TR);
		\draw [->] (TL) -- (TR);
	\end{tikzpicture},
\end{center}
and so $p^{*} [\mathbb{V}_\euX (E) \times _{\mathbb{V}_{\euX}(\eupsilon)}  \Ccat _{\euX} \euY ] = [\mathbb{V}_\euX (E \oplus F) \times _{\mathbb{V}_{\euX}(\eupsilon)} \Ccat _{\euX} \euY]$ as elements of $\chow(\mathbb{V}_\euX (E \oplus F))$. Then, the needed equality is obtained by intersecting with the zero sections of $E$ and $E \oplus F$, since $\pi_{E \oplus F} \simeq p \circ \pi_{E}$ implies $0_{E \oplus F}^{!} \circ p^{*} \simeq 0_{E}^{!}$. 

\end{proof}

\begin{Remark}
\label{Remark:virtual_fundamental_class_in_k_theory}
Note that the only reason we restricted to $1$-Artin stacks is that we needed a suitably well-behaved theory of Chow groups. It is clear that the above formula gives a fundamental class associated to any suitable homology theory of relative Artin stacks. In particular,

\begin{center}
$0^{*} [ \EuScript{O} _{\Ccat_{\euX}}] \in K_0 (\coh(\euX))$,
\end{center}
where $0: \euX \rightarrow \mathbb{V}_{\euX}(\eupsilon)$ is the zero section, defines a \emph{virtual fundamental class in $G$-theory} of a finite type Artin stack $\euX$ equipped with a choice of a perfect obstruction theory.
\end{Remark}

We now give a few examples of applications of our constructions.

\begin{Example}[Intersection theory]
\label{Example:the_basic_example}
Suppose we have a cartesian diagram

\begin{center}
 	\begin{tikzpicture}
		\node (TL) at (0, 1.3) {$\euW$};
		\node (TR) at (2, 1.3) {$\euX$};
		\node (BL) at (0, 0) {$\euY$};
		\node (BR) at (2, 0) {$\euZ$};
		
		\draw [->] (TL) -- (TR) node [midway, above] {$j$};
		\draw [->] (TL) -- (BL) node [midway, left] {$g$};
		\draw [->] (TR) -- (BR) node [midway, right] {$f$};
		\draw [->] (BL) -- (BR) node [midway, below] {$i$};
	\end{tikzpicture}
\end{center}
of $1$-Artin stacks such that $ \euX$ and $\euZ$ are smooth, $\euX$ has the resolution property and $i$ is a regular closed embedding. Consider the cofibre sequence

\begin{center}
$g^*L_{\euY/\euZ}[-1] \rightarrow j^*L_\euX \rightarrow E$.
\end{center}
where the left map is induced by the morphisms $ L_{\euY/\euZ}[-1] \rightarrow i^*L_{\euY}$ and $ f^*L_{\euZ} \rightarrow L_{\euX}$. Since $\euX$ is smooth, $L_{\euX}$ has perfect amplitude in $[0,-1]$ and since $i$ is regular, $ L_{\euY/\euZ}$ is equivalent to $\EuScript{I}/\EuScript{I}^2 [1]$. It follows that $E$ is perfect, and one easily observes that the induced morphism

\begin{center}
$E \rightarrow L_\euW$
\end{center}
is in fact a perfect obstruction theory. We deduce that $\euW$ admits a virtual fundamental class.

In the case of schemes, the resulting class coincides with $i^{!}[\euX]$ in the classical sense, as observed by Behrend and Fantechi \cite{behrend_fantechi_intrinsic_normal_cone}[6.1]. Thus, the above construction can be thought of as generalizing Fulton's construction to the setting of Artin stacks, recovering Kresch's Gysin maps. 
\end{Example}

\begin{Example}[Twisted stable maps]
\label{Example:twisted_stable_maps}
Let $\euX$ be a finitely presented, proper, smooth, tame $1$-Artin stack with finite inertia. Moreover, suppose that $\euX$ has the resolution property, that the coarse moduli space of $\euX$ is projective, and that we have fixed an element $ \beta \in \chow^{\text{num}}_1(X)$. 

In this context, one can show that the canonical morphism 
\begin{center}
$\EuScript{K}_{g, n} (\EuScript{X}, \beta) \rightarrow \twistedcurves_{g,n}$
\end{center}
from the moduli stack of twisted stable maps to the moduli stack of twisted curves has a perfect obstruction theory which admits a global resolution. To do so, one considers the diagram 
\begin{center}
	\begin{tikzpicture}
		\node (TL) at (0, 1.3) {$\mathscr{C}$};
		\node (TM) at (3, 1.3) {$\overline{\mathscr{C}}$};
		\node (TR) at (5, 1.3){$\euX$};
		\node (BL) at (0 , 0) {$\EuScript{K}_{g,n} (\euX, \beta)$};
		\node (BM) at (3, 0){$\underline{\hom}_{\twistedcurves_{g,n}} (\EuScript{C}, \euX)$};
		\node(BBM) at (3,-1.3) {$\twistedcurves_{g,n}$};
		
		\draw[->] (TL) -- (TM);
		\draw[->] (TM)--(TR) node [midway, above] {$\bar{\psi}$};
		\draw[->] (BL)--(BM) node [midway, above] {$\iota$} ;
		\draw[->] (TL) -- (BL) node [midway, left] {$\pi$};
		\draw[->] (TM)--(BM) node [midway, right] {$\bar{\pi}$};
		\draw[->] (BM) -- (BBM) node [midway, right] {$\bar{\varphi}$};
		\draw [->] (BL) -- (BBM) node [midway, below] {$\varphi$};
	\end{tikzpicture}
\end{center}
where $\EuScript{C}$ is the universal twisted curve and $\overline{\mathscr{C}} : = \EuScript{C} \times_{\twistedcurves_{g,n}} \underline{\hom}_{\twistedcurves_{g,n}} (\EuScript{C}, \euX)$. Then, by Grothendieck duality there is a canonical morphism 
\begin{center}
$ \bar{\pi}_*(\bar{\psi}^* L_\euX \otimes \omega_{\bar{\pi}})|_{\EuScript{K}_{g,n}(\euX, \beta)} \rightarrow L_\varphi [-1]$
\end{center}
which is shown to be a perfect obstruction theory using \cref{Corollary:a_morphism_is_an_obstruction_theory_if_and_only_if_it_obstructs_square_zero_extensions}. Moreover, this obstruction theory has a global resolution due to the fact that $\EuScript{X}$ has the resolution property and we deduce that the stack $\EuScript{K}_{g, n} (\EuScript{X}, \beta)$ admits a relative virtual fundamental class. In future work, will prove that this class satisfies the Gromov-Witten axioms.
\end{Example}

\begin{Example}[Quantum $K$-theory]
In the context of the previous example, one can instead consider the fundamental class in $K$-theory discussed in \cref{Remark:virtual_fundamental_class_in_k_theory}.
This class is related to quantum K-theory in the sense of Lee \cite{lee2004quantum}, which we hope to revisit in future work.
\end{Example}

\begin{Example}[Quasi-smooth derived stacks]
\label{Example:quasi-smooth_derived_stacks} 
Let $\euX \rightarrow \euY$ be a quasi-smooth morphism of derived $1$-stacks \cite{Higherandderived}, \cite{khan2020algebraic}. If $\iota: \euX^{cl} \hookrightarrow \euX $ denotes the inclusion of the underlying classical stack, then the canonical morphism 

\begin{center}
$i^*L_{\euX/ \euY} [-1] \rightarrow L_{\euX^{cl} / \euY^{cl}} [-1]$
\end{center} 
can be shown to be a perfect obstruction theory on $\euX^{cl}$ using the connectivity estimates given in \cite{spectral_algebraic_geometry}[I.1.2.5.6]. If $i^*L_{\euX/ \euY} [-1]$ has a global resolution, which is always the case if $\euX^{cl}$ has the resolution property, it follows that we have a virtual fundamental class $[\euX^{cl} \rightarrow \euY^{cl}, i^*L_{\euX/ \euY}]^{vir}$. 
\end{Example}
 
\bibliographystyle{amsalpha}
\bibliography{intrinsic_normal_cone_bibliography}

\end{document}